\newtheorem{theorem}{Theorem}[section]
\newtheorem{lemma}[theorem]{Lemma}
\newtheorem{cor}[theorem]{Corollary}
\newtheorem{fact}[theorem]{Fact}
\newtheorem{prop}[theorem]{Proposition}
\newtheorem{claim}[theorem]{Claim}
\theoremstyle{definition}
\newtheorem{example}[theorem]{Example}
\newtheorem{rmk}[theorem]{Remark}
\newtheorem{definition}[theorem]{Definition}
\newtheorem*{notation}{Notation}
\numberwithin{equation}{section}
\newcommand{\Pc}{\mathcal{P}}
\newcommand{\G}{\mathcal{G}}
\newcommand{\NN}{\mathbb{N}}
\newcommand{\MM}{\mathbb{M}}
\newcommand{\UU}{\mathbb{U}}
\DeclareMathOperator{\tp}{tp}
\DeclareMathOperator{\stp}{stp}
\DeclareMathOperator{\cb}{Cb}
\DeclareMathOperator{\dcl}{dcl}
\DeclareMathOperator{\acl}{acl}
\DeclareMathOperator{\Aut}{Aut}
\newcommand{\oa}{\overline{a}}
\newcommand{\ob}{\overline{b}}
\newcommand{\oc}{\overline{c}}
\newcommand{\od}{\overline{d}}
\newcommand{\oee}{\overline{e}}
\newcommand{\ox}{\overline{x}}
\DeclareMathOperator{\dom}{dom}
\DeclareMathOperator{\cod}{cod}
\DeclareMathOperator{\Mor}{Mor}
\DeclareMathOperator{\Ob}{Ob}
\DeclareMathOperator{\Hom}{Hom}
\DeclareMathOperator{\id}{id}
\def\Ind#1#2{#1\setbox0=\hbox{$#1x$}\kern\wd0\hbox to 0pt{\hss$#1\mid$\hss}
\lower.9\ht0\hbox to 0pt{\hss$#1\smile$\hss}\kern\wd0}
\def\ind{\mathop{\mathpalette\Ind{}}}
\def\notind#1#2{#1\setbox0=\hbox{$#1x$}\kern\wd0
\hbox to 0pt{\mathchardef\nn=12854\hss$#1\nn$\kern1.4\wd0\hss}
\hbox to 0pt{\hss$#1\mid$\hss}\lower.9\ht0 \hbox to 0pt{\hss$#1\smile$\hss}\kern\wd0}
\title{Groupoids and Relative Internality}
\author{L\'{e}o Jimenez}
\address{L\'{e}o Jimenez\\
University of Notre Dame\\
Department of Mathematics\\
255 Hurley\\
Notre Dame, Indiana \  IN 46556\\
United States}
\email{ljimene4@nd.edu}
\date{\today}
\begin{document}

\begin{abstract}
In a stable theory, a stationary type $q \in S(A)$ internal to a family of partial types $\Pc$ over $A$ gives rise to a type-definable group, called its binding group. This group is isomorphic to the group $\Aut(q/\Pc,A)$ of permutations of the set of realizations of $q$, induced by automorphisms of the monster model, fixing $\Pc \cup A$ pointwise. In this paper, we investigate families of internal types varying uniformly, what we will call relative internality. We prove that the binding groups also vary uniformly, and are the isotropy groups of a natural type-definable groupoid (and even more). We then investigate how properties of this groupoid are related to properties of the type. In particular, we obtain internality criteria for certain 2-analysable types, and a sufficient condition for a type to preserve internality.
\end{abstract}

\maketitle

\setcounter{tocdepth}{1}
\tableofcontents

\section {Introduction}
\label{intro}

\noindent
In geometric stability theory, the notion of internality plays a central role, as a tool to understand the fine structure of definable sets. More recently, it has been developed outside of stable theories, using only stable embeddedness. In this paper, we will restrict ourselves to the stable context. More precisely, our basic setup will be the following: we work in a monster model $\MM$ of a stable theory $T$, eliminating imaginaries, and is given a family of partial types $\Pc$, all over some algebraically closed set of parameters $A$. A tuple $c$ is said to be a realization of $\Pc$ if it is a realization of some partial type in $\Pc$. We will often also write $\Pc$ for the set of realizations of $\Pc$ in $\MM$. A stationary type $q \in S(A)$ is said to be $\Pc$-internal if there are $B \supseteq A$, a realization $a$ of $q$, independent of $B$ over $A$, and a tuple $\oc$ of realizations of $\Pc$ (i.e. each realizing some type in $\Pc$) such that $a \in \dcl(\oc,B)$. It is said to be almost $\Pc$-internal if $a \in \acl(\oc,B)$ instead. The important part of this definition is the introduction of the new parameters $B$. The following result, which is Theorem 7.4.8 in \cite{pillay1996geometric}, produces a type-definable group action from this configuration: 

\begin{theorem}
\label{bindgrp}

Let $\MM$ be the monster model of a stable theory $T$, eliminating imaginaries. Suppose $q\in S(A)$ is internal to a family of types $\Pc$ over $A$, an algebraically closed set of parameters. Then there are an $A$-type-definable group $G$ and an $A$-definable group action of $G$ on the set of realizations of $q$, which is naturally isomorphic (as a group action), to the group $\Aut(q/\Pc,A)$ of permutations of the set of realizations of $q$, induced by automorphisms of $\MM$ fixing $\Pc \cup A$ pointwise. 

\end{theorem}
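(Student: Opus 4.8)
The plan is to realize $\Aut(q/\Pc,A)$ as a type-definable group by coding each of its elements as an imaginary element --- the canonical parameter of the graph of the permutation it induces, computed relative to a fundamental system of solutions --- and by extracting the needed type-definability from stability. The first step is to make the internality data uniform. From the given $B\supseteq A$, a realization $a$ of $q$ with $a\ind_A B$, a tuple $\bar c$ of realizations of $\Pc$ and a $B$-definable partial function $f$ with $a=f(\bar c)$, I would pass to a model $M\supseteq B$ chosen with $M\ind_B a$, so that $a\ind_A M$ still holds and $a=f(\bar c)\in\dcl(M,\Pc)$. Writing $e=\ulcorner f\urcorner\in\dcl(M)$, an application of homogeneity over $M$ to $\tp(a/M)$ --- the nonforking extension $q_M$ of $q$ over $M$ --- shows that \emph{every} realization of $q_M$ is $f_e(\bar c')$ for some tuple $\bar c'\models\Pc$; thus $f_e$ restricts to a partial surjection from realizations of $\Pc$ onto the set $Q_0$ of realizations of $q_M$, and, shrinking the domain of $f_e$, I would arrange that $e$ is interdefinable with the canonical parameter of this restricted graph, hence fixed by every automorphism of $\MM$ fixing $Q_0\cup\Pc\cup A$ pointwise.

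The heart of the proof is the construction of the group. Every $\sigma\in\Aut(\MM/\Pc\cup A)$ sends $e$ to some $e'\models\tp(e/\Pc\cup A)$, and, as $\sigma$ fixes $\Pc$ pointwise, it sends $f_e(\bar c')$ to $f_{e'}(\bar c')$; so the map $\sigma$ induces on $Q_0$ is application of the relation $R_{e'}=\{(f_e(\bar c'),f_{e'}(\bar c')):\bar c'\models\Pc\}$, which is type-definable over $e,e'$ uniformly in $e'$. I would let $G$ be the set of canonical parameters of the relations $R_{e'}$, as $e'$ ranges over $\tp(e/\Pc\cup A)$ subject to type-definable conditions guaranteeing that $R_{e'}$ is the graph of a bijection arising this way; the crucial point is that the well-definedness clause --- that $f_{e'}$ be constant on the fibres of $f_e$ --- is type-definable and is in fact automatically satisfied here, being expressed over $\Pc$, which $\sigma$ fixes. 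This is where stability is used in earnest: definability of types and uniform boundedness are what make the family $\{R_{e'}\}$ genuinely type-definable rather than merely $\Aut(\MM/A)$-invariant. One then checks that $G$, with the operation induced by composition of the corresponding permutations of $q(\MM)$, is a type-definable group acting definably.

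Finally I would descend and extend. Neither $G$ nor its action really involves the auxiliary model $M$: both are $\Aut(\MM/A)$-invariant, so by elimination of imaginaries $G$ is $A$-type-definable and its action is given by an $A$-formula, which then defines an action of $G$ on all of $q(\MM)$. The natural isomorphism sends $\sigma\in\Aut(\MM/\Pc\cup A)$ to the element of $G$ coded by $R_{\sigma(e)}$: by the previous paragraph and homogeneity (each $e'\models\tp(e/\Pc\cup A)$ equals $\sigma(e)$ for some such $\sigma$) this is a surjective group homomorphism, its kernel is precisely the set of $\sigma$ fixing $q(\MM)$ pointwise --- because $e$, together with the analogous fundamental codes attached to the $\Aut(\MM/A)$-conjugates of $M$ which govern the action on the non-generic realizations of $q$, is fixed once $q(\MM)\cup\Pc$ is fixed --- and the $G$-action corresponds under this map to the permutation action of $\Aut(q/\Pc,A)$.

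The obstacles I anticipate are concentrated in the last two steps: arranging the fundamental system so that its canonical parameter is controlled by $q(\MM)\cup\Pc$, which is what makes the coding faithful; verifying that $\{R_{e'}\}$ is honestly type-definable and closed under composition and inversion; and transferring everything from the generic realizations $Q_0$ to all of $q(\MM)$ --- equivalently, checking that $\Aut(\MM/\Pc\cup M)$ and $\Aut(\MM/\Pc\cup A)$ induce the same permutations of $q(\MM)$, and that the $A$-definable action extends correctly to realizations of $q$ forking with $M$ over $A$. By comparison, the uniformization of the first step and the verification of the group axioms should be routine consequences of homogeneity and stability.
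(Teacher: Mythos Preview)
The paper does not prove Theorem~\ref{bindgrp} directly --- it is quoted as Theorem~7.4.8 of \cite{pillay1996geometric} --- but its proof of the generalization, Theorem~\ref{bindoid}, specializes (take $\pi$ constant) to a proof of this statement, and that is the natural point of comparison. Your plan is a correct outline, but it takes a genuinely different route from the paper's. Following Lemma~\ref{funsys}, the paper chooses the fundamental system to be a tuple $\bar a$ of \emph{realizations of $q$} and encodes a permutation $\sigma$ by the $E$-class of the pair $(\bar a,\sigma(\bar a))$, where $E$ is a relatively definable equivalence relation on the type-definable set $X$ of pairs having equal type over $\Pc$ (Claims~\ref{stabemb}--\ref{eisdef}). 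You instead pass to an auxiliary model $M$, let $e$ be the canonical parameter of a surjection $f_e$ from $\Pc$ onto the \emph{generic} realizations $Q_0=q_M(\MM)$, and encode $\sigma$ by the canonical parameter of the graph $R_{\sigma(e)}$. The payoff of the paper's choice is precisely the obstacle you flag at the end: since the fundamental system already lies inside $q(\MM)$, every realization of $q$ --- generic over the witnessing parameters or not --- is of the form $f(\bar a,\bar c)$ from the outset, so there is no transfer from $Q_0$ to $q(\MM)$ and no descent from $M$ to $A$ to perform; the construction is $A$-type-definable on the nose. Your approach trades that simplification for a coding by a single imaginary rather than by pairs modulo $E$. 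Both are standard and both succeed, but the paper's route sidesteps exactly the two steps you correctly identify as the delicate ones.
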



This is the result that we will generalize in this paper. The group arising in this theorem is called the binding group of $q$ over $\Pc$, and was first introduced by Zilber. At our level of generality, its existence was proved by Hrushovski. Our proof will follow very closely the proof given in \cite{pillay1996geometric}, with a few minor adjustments.

The binding group of $q$ over $\Pc$ will often be denoted by $\Aut(q/\Pc,A)$. It encodes the dependence on the extra parameters $B$. For example, if $B = A$, the binding group is trivial. For a more modern treatment of these binding groups, outside of stable theories, and with definable sets instead of types, we refer the reader to \cite{hrushovski2012groupoids} and \cite{haykazyan2017functoriality}. 

Recall that $\MM$ is a monster model of a stable theory $T$, eliminating imaginaries. If $\Phi$ is a partial type, we will denote $\Phi(\MM)$ the set of its realizations in $\MM$. 
Again, let $\Pc$ be a family of partial types over $A$ algebraically closed. Suppose there is a type $q \in S(A)$, and an $A$-definable function $\pi$, whose domain contains $q(\MM)$. 

\begin{definition}

The type $q$ is said to be relatively $\Pc$-internal via $\pi$ if for any $a \models q$, the type $\tp(a/\pi(a)A)$ is stationary and $\Pc$-internal. Denote this type $q _{\pi(a)}$.

\end{definition}

From this configuration, we can define a groupoid $\G$ (see section \ref{preliminaries} for a definition of groupoid). Its objects are realizations of $\pi(q)$, and for any $a,b \models q$, the set of morphisms $\Mor(\pi(a),\pi(b))$ consists of bijections from $q _{\pi(a)}(\MM)$ to $q  _{\pi(b)}(\MM)$, induced by automorphisms of $\MM$ fixing $\Pc \cup A$ pointwise, and taking $\pi(a)$ to $\pi(b)$. In particular, the isotropy groups $\Mor(\pi(a), \pi(a))$ are the binding groups $\Aut(q_{\pi(a)}/\Pc ,A)$, hence are type-definable  over $A\pi(a)$ by Theorem \ref{bindgrp}. This groupoid acts naturally on the set of realizations of $q$. By that we mean, setting $X = \lbrace (\sigma, a) \in \Mor(\G) \times q(\mathbb{M}): \dom(\sigma) = \pi(a) \rbrace$, that we have a map $X \rightarrow q(\mathbb{M})$, satisfying the obvious group action-like axioms. In our case, the action is given by $(\sigma,a) \rightarrow \sigma(a)$. We obtain the following generalization of the type-definability of binding groups: 

\begin{theorem}
\label{bindoid}

The groupoid $\G$ is isomorphic to an $A$-type-definable groupoid, and its natural groupoid action on realizations of $q$ is $A$-definable.

\end{theorem}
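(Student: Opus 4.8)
The plan is to code each morphism of $\G$ by the canonical parameter of the corresponding bijection, to show that the resulting set $M$ of parameters, equipped with the natural operations, is an $A$-type-definable groupoid isomorphic to $\G$, and to show that the action on $q(\MM)$ is $A$-definable. Write $r := \pi(q)$ for the pushforward type, a complete stationary type over $A$, so $\Ob(\G) = r(\MM)$; for $e \models r$ write $q_e$ for the type $q_{\pi(a)}$ attached to any $a \models q$ with $\pi(a) = e$, which depends only on $e$ since an $A$-automorphism carrying one realization of $q$ to another carries the corresponding value of $\pi$ to the other. The two ingredients are Theorem~\ref{bindgrp}, applied with base $Ae$ for $e \models r$, and a uniform version of relative internality, which I would establish first.

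\emph{Step 1: uniform relative internality.} I would show that there are a single $A$-definable function $f(x,y,\bar z)$ and finitely many partial types $p_1,\dots,p_n \in \Pc$ (gathered into $\bar z$) such that for every $e \models r$ there is a tuple $b_e$ with $a \ind_{Ae} b_e$ for some realization $a$ of $q_e$ and with every realization of $q_e$ equal to $f(e,b_e,\bar c)$ for some $\bar c$ realizing $p_1 \times \cdots \times p_n$. For a fixed $e$ this is the usual ``fundamental system of solutions'' underlying Theorem~\ref{bindgrp} (one may even take $b_e$ to be a finite tuple of realizations of $q_e$); the point is the uniformity, which I would obtain by a compactness argument using that $r$ is a \emph{complete} type over $A$, so that the length of $\bar z$, the types $p_i$, and the first-order content of $f$ can be chosen independently of $e$. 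I would in addition arrange $b_e \ind_{\bar c A e} \Pc$, for use in Step~2.

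\emph{Step 2: coding morphisms.} For $e,e' \models r$ and $\phi \in \Aut(\MM/\Pc A)$ with $\phi(e) = e'$, the induced bijection $\sigma_\phi \colon q_e(\MM) \to q_{e'}(\MM)$ fixes every realization of $\Pc$, so $\sigma_\phi(f(e,b_e,\bar c)) = f(e',\phi(b_e),\bar c)$ for all admissible $\bar c$; thus $\sigma_\phi$ is a partial function whose graph is type-definable over $A e b_e e' \phi(b_e)$, and by elimination of imaginaries it has a canonical parameter $[\sigma_\phi]$. The map $\sigma \mapsto [\sigma]$ on $\Mor(\G)$ is well-defined and injective by canonicity, and I would identify its image $M$ with the set of canonical parameters of bijections between fibers $q_e, q_{e'}$ of $q$ over $r$ admitting a presentation $f(e,b,\bar c) \mapsto f(e',b',\bar c)$ in which $b,b'$ are ``valid parameters'' for $e,e'$ in the sense of Step~1, the rule is well-defined and surjective (both type-definable conditions), and $eb \equiv_{\Pc A} e'b'$. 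This is expressed by a partial type; the one substantive point is that every element of $M$ is induced by a genuine automorphism of $\MM$ fixing $\Pc \cup A$ pointwise, and not merely by a bijection of the right shape — the relative analogue of the corresponding step in the proof of Theorem~\ref{bindgrp}, proved the same way, using stable embeddedness of $\Pc$ and the choice $b_e \ind_{\bar c A e} \Pc$ to promote the compatibility condition over the finite tuple $\bar c$ to one over all of $\Pc$.

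\emph{Step 3: action, operations, and descent.} The relation $(u,a) \mapsto \sigma_u(a)$, where $\sigma_u$ is the bijection coded by $u$, on the type-definable set $\{(u,a): u \in M,\ a \text{ in the domain fiber of } \sigma_u\}$ has type-definable graph and is a total single-valued map, hence is definable by the standard upgrade (as for the binding group action in Theorem~\ref{bindgrp}); the domain, codomain, composition and inverse maps of $M$ are then definable, being determined by this action, and one checks the groupoid axioms, so $M$ is a type-definable groupoid isomorphic to $\G$ with definable action, its isotropy groups recovering the binding groups $\Aut(q_e/\Pc,A)$ of Theorem~\ref{bindgrp}. Finally the construction is canonical, hence $\Aut(\MM/A)$-invariant; being type-definable over $A e_0 b_{e_0} \bar c_0$ for one fixed $e_0 \models r$ (with $\bar c_0$ realizing the $p_i$), it is type-definable over $\acl(A) = A$, and likewise the action is $A$-definable. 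I expect the two real obstacles to be passing from the pointwise hypothesis to the uniform statement of Step~1, and verifying in Step~2 that an abstract code is realized by a \emph{global} automorphism fixing all of $\Pc A$ rather than just the finite tuple of $\Pc$-realizations in sight — both amounting to running the compactness and extension arguments behind Theorem~\ref{bindgrp} fiberwise over the complete type $r$.
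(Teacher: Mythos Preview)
Your outline tracks the paper's architecture --- uniform fundamental systems, then a coding of morphisms by pairs of such systems, then definability of the action and operations --- but you have mislocated the main difficulty, and Step~2 as written has a gap.

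The two obstacles you single out are minor. Uniformity in Step~1 needs no compactness: fix one $e_0$, apply Lemma~\ref{funsys} to $q_{e_0}$ to get $f$, a tuple $\oa$, and the $\Psi_i$; since $e_0 = \pi(\oa) \in \dcl(\oa)$ the function $f$ is already over $A$, and $A$-automorphisms transport the entire configuration to every fibre (this is Claim~\ref{unif}). Your extra condition $b_e \ind_{\bar c A e} \Pc$ is neither needed nor obviously arrangeable. The ``global automorphism'' point is Fact~\ref{tenzig}: two tuples with the same type over $\Pc$ are conjugate by an automorphism fixing $\Pc$, no independence required; that the condition $\tp(\oa/\Pc) = \tp(\ob/\Pc)$ is itself type-definable is Fact~\ref{chuddcl} (Claim~\ref{stabemb}).

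The real work is hidden in ``by elimination of imaginaries it has a canonical parameter $[\sigma_\phi]$''. The graph of $\sigma_\phi$ is only \emph{type}-definable --- it is the restriction of a definable relation to $q_e(\MM) \times q_{e'}(\MM)$ --- and elimination of imaginaries furnishes finite codes for definable sets, not type-definable ones. If instead you code the ambient definable formula, different presentations $(\oa, \ob)$ of the same morphism may give different codes, since the formulas can disagree off the fibre. Either way, what must actually be shown is that the equivalence relation $E$ on presenting pairs, ``the induced bijections agree on $q_{\pi(\oa)}(\MM)$'', is relatively \emph{definable} and not merely type-definable, so that $X/E$ lands in a fixed sort. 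A~priori $E$ demands agreement on every realization of a partial type; the paper (Claim~\ref{eisdef}) reduces this to agreement on a single generic realization, expresses that via the defining scheme of the stationary type $q_{\pi(\oa)}$, and then --- the genuinely delicate step --- shows the scheme is uniform in the fibre by using that any two fibres are $A$-conjugate. The same device is needed again for composition (Claim~\ref{compisdef}). Without this argument, your $M$ is not a type-definable set in bijection with $\Mor(\G)$, and the ``standard upgrade'' of Step~3 has nothing to act on.
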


In particular, the binding groups are uniformly type-definable, and are the isotropy groups of the type-definable groupoid. This groupoid arises because the group $\Aut(q/\Pc)$, even when $q$ is not internal, still acts definably on the fibers of $\pi$, but its global action is not definable. The new and interesting fact is that all these local fiber actions are uniformly definable, and come together to form a type-definable groupoid.

In \cite{hrushovski2012groupoids} and \cite{haykazyan2017functoriality}, internality is considered in a different context, and from some internal sorts, a definable groupoid is constructed. It arises for different reasons, and we will compare the two groupoids in the next paragraphs. In these two papers, the authors fix a monster model $\mathbb{U}$ of some theory $T$, eliminating imaginaries, and a monster model $\mathbb{U}'$ of some theory $T'$, with $\mathbb{U} \subset \mathbb{U}'$. They assume that $\mathbb{U}$ is stably embedded in $\mathbb{U}'$, and $\UU '$ is internal to $\UU$ with only one new sort, called $S$. Under these assumptions, they construct a $*$-definable (over $\emptyset$) connected groupoid $\G'$, in $(\UU' )^{eq}$, with one distinguished object $a$ and a full $*$-definable (over $\emptyset$) subgroupoid $\G$ in $\UU$, such that $\Mor_{\G '}(a,a)$ acts definably on $\UU'$, and this action is isomorphic to $\Aut(\UU ' /\UU)$ acting on $S$. Note that in \cite{haykazyan2017functoriality}, the groupoid constructed is actually proven to be $\emptyset$-definable, under some mild additional assumption. 

The starting point of their proof is the following observation: since $\UU '$ is internal to $\UU$, there is a $b$-definable set $O_b$ in $\UU$, and a $c$-definable bijection $f_c: S \rightarrow O_b$. Roughly speaking, the idea is now to allow these parameters $b$ and $c$ to vary, the $b$ yielding objects of a groupoid, and the $f_c$ morphisms between objects. Therefore, this groupoid will encode the non-canonicity of the parameters $b$ and $c$ used to witness internality. The groupoid constructed in the present paper, however, encodes the fact that some maps are partially definable, but not globally definable.

Comparing these papers and ours, some questions arise. First, one can define relative internality in this different context, and it would be interesting to see if a groupoid witnessing it could be contructed there. Second, the groupoids obtained from internality live in the sort $\UU$, and this is used to obtain a correspondence between certain groupoids in $\UU$ and internal generalised imaginary sorts of $\UU$. In our setup, this would be equivalent to our groupoid $\G$ living in $\Pc ^{eq}$. As it will become clear from the proof, our groupoid does not live in $\Pc ^{eq}$. It would be desirable to identify some object in $\Pc ^{eq}$ coming from relative internality. In section \ref{construction}, we will discuss some obstruction to this. 

The rest of the paper will explore different properties of the groupoids arising from relative internality, and how they relate to the type $q$. Mostly, we will seek to link some properties of $\G$ to $\Pc$-internality, or almost $\Pc$-internality, of the type $q$.

One motivation for this is to be able to determine when an analysable type is in fact internal. Recall that a type $q$ is said to be $\Pc$-analysable in n steps if for any $a \models q$ there are $a_n = a, a_{n-1}, \cdots ,a_1$ such that $a_{i} \in \dcl(a_{i+1})$ and $\tp(a_{i+1}/a_i)$ is $\Pc$-internal for all $i$. Therefore, if $q$ is relatively $\Pc$-internal via $\pi$, and the type $\pi(q)$ is $\Pc$-internal, we see that $q$ is $\Pc$-analysable in two steps. The question of which analysable types are actually internal is connected with the Canonical Base Property, which is a property of finite U-rank theories. Introduced in \cite{pillay2002remarks}, it is a model theoretic translation of a result in complex geometry, and has some attractive consequences (see \cite{chatzidakis2012note}, \cite{pillay2003jet}). It states that for any tuple $a,b$, if $b = \cb(\stp(a/b))$, then $\tp(b/a)$ is almost internal to the family $\Pc$ of nonmodular U-rank one types. But it is proven in \cite{chatzidakis2012note} that this type is always $\Pc$-analysable. Therefore, the Canonical Base Property boils down to the collapse of an analysable type into an internal one. 

In this paper, we will expose two properties of groupoids implying that a relatively internal type is internal. The first one, retractability, was introduced in \cite{goodrick2010groupoids}, and is related to 3-uniqueness. Here, it will imply that the 2-analysable type is a product of two weakly orthogonal types, one of which is $\Pc$-internal. The second needs the construction of a Delta groupoid, which adds simplicial data to the groupoid. We define a notion of collapsing for Delta groupoids, which turns out to be equivalent, if $\pi(q)$ is $\Pc$-internal, to $\Pc$-internality of the type $q$. 

Finally, in \cite{moosa2010model}, a strengthening of internality, called being Moishezon, or preserving internality in later papers, was introduced, again motivated by properties of compact complex manifolds. A criterion for when an internal type preserves internality was proved in this paper, but under the assumption that the ambient theory has the Canonical Base Property. Here, we prove a criterion for preserving internality in terms of Delta groupoids, valid in any superstable theory.

The paper is organized as follow: in section 2, we recall some results concerning internality and stable theories that will be used frequently, and say a few words about groupoids. In section 3, we construct the type-definable groupoid of Theorem \ref{bindoid}. In section 4, we define retractability for a type-definable groupoid, and explore the consequences of this property. In Section 5, we define Delta groupoids, introduce the notion of collapsing, and link it with internality and preservation of internality.

Before we start, let us give a few conventions and notations. As stated before, we will work throughout in the monster model $\MM$ of a stable theory, eliminating imaginaries. Theorem \ref{bindoid} is stated over any small parameter set $A$, but we will work, without loss of generality, over the empty set. We also assume that $\acl(\emptyset) = \emptyset$. Recall that if $\Pc$ is a family of partial types over the empty set, by a realization of $\Pc$, we mean a tuple $c$ realizing some partial type in $\Pc$. We will often write $\Pc$ for the set of realizations of the family of partial type $\Pc$ in $\MM$, since no confusion could arise from this. Finally, recall that if $\Phi$ is any partial type, we will denote $\Phi(\MM)$ the set of its realizations in $\MM$. 

We assume familiarity with stability theory and geometric stability theory, for which \cite{pillay1996geometric} is a good reference.

I would like to thank my advisor, Anand Pillay, for giving me regular input and suggestions during the writing of this paper. I would also like to thank Levon Haykazyan, Rahim Moosa, and Omar L\'{e}on S\'{a}nchez for discussing the subject of this paper with me. Finally, I am grateful to my referee, whose comments and suggestions lead to a substantial improvement of this paper.

\bigskip
\section{Preliminaries}
\label{preliminaries}
\noindent

Internality of a type $q$ to a family of partial types $\Pc$ is equivalent to: there exists a set of parameters $B$ such that for any $a \models q$, there are $c_1, \cdots, c_n$ realizing $\Pc$ satisfying $a \in \dcl(c_1, \cdots ,c_n, B)$. Moreover, the parameters $B$ can be taken as realizations of $q$, as the following, which is Lemma 7.4.2 from \cite{pillay1996geometric}, shows:  

\begin{lemma}
\label{funsys}

Let $A$ be a small set of parameters. Suppose $\Pc$ is a family of partial types over $A$, and $q$ is a $\Pc$-internal stationary type over $A$. Then there exist a partial $A$-definable function $f(y_1, \cdots, y_m, z_1, \cdots,z_n)$, a sequence $a_1, \cdots, a_m$ of realizations of $q$, and a sequence $\Psi_1 ,\cdots ,\Psi_n$ of partial types in $\Pc$, such that for any $a$ realizing $q$, there are $c_i$ realizing $\Psi_i$, for $i = 1 \cdots n$, such that $a = f(a_1, \cdots, a_m, c_1, \cdots ,c_n)$.

\end{lemma}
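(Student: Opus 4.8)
The plan is to distil from the definition of $\Pc$-internality a single $A$-definable function witnessing internality \emph{uniformly} over all of $q(\MM)$, and then to observe that the auxiliary parameters it uses can be taken to be realizations of $q$ itself.

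\emph{Extracting finite data.} Unwinding the definition, $\Pc$-internality of $q$ yields a set $B \supseteq A$, a realization $a_0 \models q$ with $a_0 \ind_A B$, and realizations $c_1^0, \dots, c_k^0$ of partial types $\Phi_1, \dots, \Phi_k \in \Pc$ such that $a_0 \in \dcl(B c_1^0 \cdots c_k^0)$. By finite character of $\dcl$, $B$ may be shrunk to a finite tuple $\bar b$, and there is an $A$-definable partial function $g$ with $a_0 = g(\bar b, \bar c_0)$, where $\bar c_0 := (c_1^0, \dots, c_k^0)$; monotonicity of forking keeps $a_0 \ind_A \bar b$.

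\emph{Uniformizing over $q(\MM)$.} Given any finitely many $a^{(1)}, \dots, a^{(l)} \models q$, let $\bar b^\ast$ realize the non-forking extension of $\tp(\bar b/A)$ to $A a^{(1)} \cdots a^{(l)}$; then $a^{(j)} \ind_A \bar b^\ast$ for each $j$, so stationarity of $q$ gives $\tp(a^{(j)} \bar b^\ast / A) = \tp(a_0 \bar b/A)$, whence $a^{(j)} = g(\bar b^\ast, \bar c^{(j)})$ for suitable $\bar c^{(j)}$ realizing $(\Phi_i)_{i}$. Consequently the partial type over $A$ expressing ``$x$ is $A$-conjugate to $\bar b$, and for every $a \models q$ there exist realizations $c_1, \dots, c_k$ of $\Phi_1, \dots, \Phi_k$ with $a = g(x, c_1, \dots, c_k)$'' is finitely satisfiable, hence realized; fix such a $\bar b$, so that every realization of $q$ lies in $\{\, g(\bar b, \bar c) : \bar c \models (\Phi_i)_i \,\}$.

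\emph{Replacing $\bar b$ by realizations of $q$.} This is the crux. Since $A$ is algebraically closed, $\tp(\bar b/A)$ is stationary, and since every $A$-conjugate of a uniform witness is again a uniform witness, it suffices to produce one uniform witness lying in $\dcl$ of realizations of $q$ together with realizations of $\Pc$. Take a Morley sequence $(a_1, \bar c_1), \dots, (a_m, \bar c_m)$ in $\tp(a_0 \bar c_0 / A \bar b)$ over $A\bar b$, with $m$ large; then $a_1, \dots, a_m$ is a Morley sequence in $q$ over $A$ (each $\tp(a_i/A\bar b) = q|_{A\bar b}$ is non-forking over $A$, so $a_i \ind_A \bar b$, which together with $a_i \ind_{A\bar b} a_{<i}$ gives $a_i \ind_A \bar b a_{<i}$), and $a_i = g(\bar b, \bar c_i)$ with $\bar c_i \models (\Phi_j)_j$. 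The point is that, for $m$ large, $\bar b$ is determined by $a_1, \dots, a_m$ and the $\bar c_i$ up to finitely many realizations of $\Pc$ — using that the canonical base of a stationary type is contained in the definable closure of any Morley sequence realizing it, applied to $\tp\big((a_i, \bar c_i)_i / A\bar b\big)$ after discarding any part of $\bar b$ independent over $A$ from everything in sight — so $\bar b = h(a_1, \dots, a_m, \bar e)$ for an $A$-definable $h$ and a tuple $\bar e$ of realizations of $\Pc$. Setting $f(y_1, \dots, y_m, \bar z, \bar w) := g\big(h(y_1, \dots, y_m, \bar w), \bar z\big)$, with $a_1, \dots, a_m$ as above and with $\Psi_1, \dots, \Psi_n$ taken to be $\Phi_1, \dots, \Phi_k$ together with the types over $A$ of the entries of $\bar e$, gives the required data. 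The step I expect to be the main obstacle is precisely this last one: the new parameters $B$ are, by design, what internality cannot simply dispense with, and the only thing that lets us re-coordinate them through $q$ and $\Pc$ is stationarity of $q$, which forces any witness into generic position relative to a Morley sequence in $q$ and thereby makes it reconstructible from that sequence modulo $\Pc$.
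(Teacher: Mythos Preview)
Your route via canonical bases is the standard one (the paper only cites Pillay here and does not reproduce the proof), but two steps fail as written.

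In the uniformization step, the displayed condition on $x$ is not a partial type over any small set: unwinding the universal quantifier over $q(\MM)$ and the existential over the partial types $\Phi_i$ gives, for each finite fragment $\Psi_0$, an infinite \emph{disjunction} $\bigvee_{\phi \in q} \forall y\,\big(\phi(y) \to \exists z\,(\Psi_0(z) \wedge y = g(x,z))\big)$, and a conjunction of $\bigvee$-definable conditions need not be type-definable, so finite satisfiability does not hand you a realization. In the replacement step, the claim $\bar b = h(a_1, \dots, a_m, \bar e)$ overstates what canonical bases give: writing $d := \cb\big(\stp(a_0\bar c_0 / A\bar b)\big)$ you obtain $d \in \dcl\big(A,(a_i\bar c_i)_{i\le m}\big)$ and $a_0\bar c_0 \ind_{Ad} \bar b$, hence only $a_0 \in \acl(Ad\bar c_0)$ for free; $\bar b$ itself is not recovered, and ``discarding the independent part of $\bar b$'' is not a meaningful operation on a fixed tuple. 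The upgrade to $a_0 \in \dcl(Ad\bar c_0)$ uses that the type over the canonical base is stationary: if $a' \equiv_{Ad\bar c_0} a_0$, choose $\bar b' \equiv_{Ad} \bar b$ with $\bar b' \ind_{Ad} a_0 a' \bar c_0$; then $(a_0,\bar c_0)$ and $(a',\bar c_0)$ both realize the unique nonforking extension to $Ad\bar b'$, which contains $y = g(\bar b', z)$, so $a' = g(\bar b',\bar c_0) = a_0$. This produces an $A$-definable $f$ with $a = f(a_1, \dots, a_m, \bar c)$ whenever $a, a_1, \dots, a_m$ are $m{+}1$ \emph{independent} realizations of $q$. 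The independence restriction --- precisely what your uniformization step was meant to remove --- is then eliminated by one iteration: for arbitrary $a \models q$ pick independent $a_1', \dots, a_m' \models q$ with $(a_j')_j \ind_A a\, a_1\cdots a_m$, write $a = f\big((a_j')_j, \bar c\big)$ and each $a_j' = f\big((a_i)_i, \bar c_j'\big)$, and substitute.
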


The tuple $a_1, \cdots , a_m$ obtained in this lemma is called a fundamental system of solutions for $q$.

In fact, we define, for any type $q$:

\begin{definition}\label{funsol}

If $q$ is $\Pc$-internal, a tuple $\oa$ of realizations of $q$ is said to be a fundamental system of solutions for $q$ if for any $b \models q$, we have $b \in \dcl(\oa, \Pc)$. If $q$ has a fundamental system consisting of only one realization, it is said to be a fundamental type.

\end{definition}

The following fact will be used implicitly throughout the article:

\begin{fact}\label{fungroup}

If $q$ is internal to $\Pc$, and $r \in S(\emptyset)$ is the type of a fundamental system of solutions for $q$, then the binding groups $\Aut(q/\Pc)$ and $\Aut(r/\Pc)$ are $\emptyset$-definably isomorphic.

\end{fact}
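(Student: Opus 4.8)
The plan is to realize the desired isomorphism first at the level of permutation groups, and then to upgrade it to an $\emptyset$-type-definable isomorphism of the type-definable groups provided by Theorem~\ref{bindgrp}. Fix a fundamental system of solutions $\oa=(a_1,\dots,a_m)$ for $q$ with $\tp(\oa)=r$; we may assume $r$ is stationary, which is in any case needed for $\Aut(r/\Pc)$ to be a binding group in the sense of Theorem~\ref{bindgrp}. Since every coordinate of any $\ob\models r$ realizes $q$ and $\oa$ is a fundamental system of solutions for $q$, we get $\ob\in\dcl(\oa,\Pc)$; thus $\oa$ is also a fundamental system of solutions for $r$, and taking $\ob$ to realize the nonforking extension of $r$ to $\oa$ exhibits $r$ as $\Pc$-internal. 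Write $G_q=\Aut(q/\Pc)$ and $G_r=\Aut(r/\Pc)$ for the resulting $\emptyset$-type-definable groups, with their $\emptyset$-definable actions on $q(\MM)$ and $r(\MM)$.

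Next I produce the abstract isomorphism. By definition the restriction maps $\Aut(\MM/\Pc)\to G_q$, $\sigma\mapsto\sigma|_{q(\MM)}$, and $\Aut(\MM/\Pc)\to G_r$, $\sigma\mapsto\sigma|_{r(\MM)}$, are surjective. They have the same kernel: if $\sigma$ is the identity on $q(\MM)$ then it is the identity on $r(\MM)\subseteq q(\MM)^m$, and conversely if $\sigma$ is the identity on $r(\MM)$ then $\sigma$ fixes $\oa$ pointwise, hence fixes every $b\models q$ because $b\in\dcl(\oa,\Pc)$ and $\sigma$ fixes $\Pc$ pointwise. Consequently $\sigma|_{q(\MM)}\mapsto\sigma|_{r(\MM)}$ is a well-defined group isomorphism $\Phi\colon G_q\to G_r$.

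It remains to see that $\Phi$ is $\emptyset$-type-definable, and the key observation is that $\Phi$ is the unique isomorphism intertwining the natural actions of $G_q$ and $G_r$ on $r(\MM)$: here $G_q$ acts on $r(\MM)$ coordinatewise through its action on $q(\MM)$ (this is $\emptyset$-definable and preserves $r(\MM)$, since automorphisms preserve $r$), while $G_r$ acts on $r(\MM)$ directly. For any $\ob\models r$ the map $g\mapsto g\cdot\ob$ is injective on $G_q$ — if $g=\sigma|_{q(\MM)}$ fixes $\ob$ then $\sigma$ fixes $\ob$ pointwise, hence, $\ob$ being a fundamental system of solutions for $q$, $\sigma$ fixes $q(\MM)$ pointwise — and likewise $h\mapsto h\cdot\ob$ is injective on $G_r$; moreover the two images coincide, being both equal to $\{\sigma(\ob):\sigma\in\Aut(\MM/\Pc)\}$. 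Hence $\{(g,h)\in G_q\times G_r:g\cdot\ob=h\cdot\ob\}$ is the graph of a bijection, it is type-definable over $\ob$, and it equals the graph of $\Phi$: if $g=\sigma|_{q(\MM)}$ then $g\cdot\ob=\sigma(\ob)=\sigma|_{r(\MM)}(\ob)=\Phi(g)\cdot\ob$. Since $G_q$, $G_r$, $r(\MM)$ and both actions are $\emptyset$-definable, and since running the same argument with $\ob$ replaced by any $\ob'\models r$ (every realization of $r$ is again a fundamental system of solutions for $q$ and $r$) yields the same graph, $\Phi$ is invariant under $\Aut(\MM/\emptyset)$; being type-definable over $\ob$ and $\Aut(\MM/\emptyset)$-invariant, it is type-definable over $\emptyset$, so it is an $\emptyset$-definable isomorphism as required. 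The one delicate point is precisely this passage from a $\ob$-definable isomorphism to an $\emptyset$-definable one; everything else is bookkeeping with the permutation-group descriptions of the binding groups together with the fact that $\oa$ is a fundamental system of solutions for both $q$ and $r$.
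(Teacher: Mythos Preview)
The paper does not actually prove Fact~\ref{fungroup}: it is stated as a known fact and then used implicitly throughout, so there is no ``paper's own proof'' to compare against. Your argument is correct and is essentially the standard one: identify both binding groups as quotients of $\Aut(\MM/\Pc)$ with the same kernel, then exhibit the resulting isomorphism as the relation $g\cdot\ob=h\cdot\ob$ on $G_q\times G_r$ for any $\ob\models r$, which is parameter-independent by the fundamental-system property. The only step worth tightening is the final descent from $\ob$-definability to $\emptyset$-definability: from $\Aut(\MM)$-invariance and definability over $\ob$ you get type-definability over $\emptyset$, but to conclude \emph{relative} $\emptyset$-definability (i.e., a single formula cutting out the graph inside $G_q\times G_r$) one should invoke compactness on the fact that the formula $x\cdot\ob=y\cdot\ob$ defines the same subset of $G_q\times G_r$ for every $\ob\models r$, exactly as is done later in the paper in the proof of Claim~\ref{eisdef}. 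This is routine, and your sentence flagging it as ``the one delicate point'' shows you are aware of it.
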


By Lemma \ref{funsys}, any internal type has a fundamental system of solutions.  

\begin{rmk}
\label{funind}
By inspecting the proof (in \cite{pillay1996geometric}) of the previous lemma, one notices that the tuples $a_1, \cdots , a_n$ are independent realizations of $q$. This will be useful in section 5.
\end{rmk}

The following two facts will be useful to us:




\begin{fact}
\label{chuddcl}

For any family of partial types $\Pc$ and tuple $a$, we have $\tp(a/\dcl(a) \cap \Pc) \models \tp(a/\Pc)$.

\end{fact}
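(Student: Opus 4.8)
The plan is to route the argument through the canonical base of the stationarisation of $\tp(a/\Pc)$, using crucially that the set $\Pc(\MM)$ of realisations of $\Pc$ is invariant under $\Aut(\MM)$ (working, as the paper does, over $\emptyset$). Write $P := \Pc(\MM)$ and $d := \dcl(a)\cap P$. Unravelling the assertion, it suffices to show that every realisation $a'$ of $\tp(a/d)$ in $\MM$ satisfies $\tp(a'/P) = \tp(a/P)$.

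First I would show that $e := \cb(\tp(a/\acl(P)))$ is contained in $\dcl(a)$. The type $\tp(a/\acl(P))$ is stationary, since $\acl(P)$ is algebraically closed, so it has a canonical base $e$, which by definition is fixed pointwise by every automorphism of $\MM$ stabilising this type. Now if $\sigma\in\Aut(\MM/a)$ then $\sigma$ fixes $a$ and, by invariance of $\Pc(\MM)$, sends $\acl(P)$ to itself setwise, hence $\sigma(\tp(a/\acl(P))) = \tp(\sigma(a)/\sigma(\acl(P))) = \tp(a/\acl(P))$; therefore $\sigma$ fixes $e$ pointwise. Since this holds for all $\sigma\in\Aut(\MM/a)$ and $T$ eliminates imaginaries, we get $e\subseteq\dcl(a)$. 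Combined with the standard inclusion $e\subseteq\acl(P)$, this gives $e\subseteq\dcl(a)\cap\acl(P)$. The step I expect to be the main obstacle is upgrading this to $e\subseteq\dcl(a)\cap P = d$, i.e.\ controlling algebraic closure by definable closure at the level of $\Pc$: an element of $\dcl(a)\cap\acl(P)$ is built from a finite tuple of realisations of $\Pc$ but need not itself realise a type in $\Pc$. I would handle this with the finite equivalence relation theorem applied to the extension $P\subseteq\acl(P)$ — the relevant classes $[e']_E$ lie in $\dcl(a)\cap P$ and, adjoined to $\tp(a/P)$, recover $\tp(a/\acl(P))$ — or, more simply, by first replacing $\Pc$ by the family of all partial types with solution set inside $\dcl(\Pc(\MM))$, which leaves $\tp(a/\Pc)$ unchanged and makes $\Pc(\MM)$ closed enough for the inclusion.

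Granting $e\subseteq d$, I would conclude as follows. Let $a'\models\tp(a/d)$; since $e\subseteq d$ we have $a'\equiv_e a$, so there is $\tau\in\Aut(\MM/e)$ with $\tau(a)=a'$. As $\tau$ fixes the canonical base $e$ pointwise, $\tau$ stabilises the stationary type $\tp(a/\acl(P))$; but $\tau$ also sends $\acl(P)$ to itself setwise (invariance of $\Pc(\MM)$ again), so $\tau(\tp(a/\acl(P))) = \tp(\tau(a)/\acl(P)) = \tp(a'/\acl(P))$. Hence $\tp(a'/\acl(P)) = \tp(a/\acl(P))$, and restricting to $P$ yields $\tp(a'/P)=\tp(a/P)$, as desired.
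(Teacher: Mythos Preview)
The paper does not give its own proof here; it cites Claim II in Pillay's proof of Theorem 7.4.8. That argument proceeds directly via definability of types: for each formula $\phi(x,\bar y)$ the $\phi$-definition of $\tp(a/\Pc)$ has a canonical parameter $c_\phi$, which lies in $\dcl(a)$ (being determined by $a$) and in $\dcl(\Pc)$ (being a code for a $\Pc$-definable set). Your canonical-base approach reaches the analogous conclusion $e\subseteq\dcl(a)\cap\acl(P)$ by a somewhat longer route, and your final step deducing $\tp(a'/P)=\tp(a/P)$ from $a'\equiv_e a$ is correct.

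The gap you flag at step 3 is genuine, and neither of your proposed fixes closes it. Fix (b) goes the wrong way: enlarging $\Pc$ to $\Pc'$ with $\Pc'(\MM)=\dcl(P)$ and running your argument yields $\tp(a/\dcl(a)\cap\dcl(P))\models\tp(a/P)$, but since $\dcl(a)\cap P\subseteq\dcl(a)\cap\dcl(P)$, this does \emph{not} give $\tp(a/\dcl(a)\cap P)\models\tp(a/P)$; the implication between types over nested parameter sets runs in the opposite direction. Fix (a) is not carried out, and it is unclear how finite equivalence relations would manufacture elements of $P$ itself (rather than of $\dcl(P)$) from $e$. In fact the statement as literally written can fail when $\Pc$ is not closed under definable operations: take $T$ the theory of an equivalence relation with infinitely many infinite classes, $\Pc$ a single two-variable type of inequivalent pairs, and $a$ a singleton; then $\dcl(a)\cap P=\emptyset$ while $\tp(a/P)$ records the $E$-class of $a$. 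What both your argument and Pillay's actually deliver is the version with $\dcl(\Pc)$ in place of $\Pc$ on the left-hand side, which is harmless for the paper's application in Claim~\ref{stabemb}.
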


For a proof of this, see Claim II of the proof of Theorem 7.4.8 in \cite{pillay1996geometric}.

\begin{fact}
\label{tenzig}

If $\Pc$ is a family of partial types, for any two tuples $a$ and $b$, we have $\tp(a/\Pc) = \tp(b/\Pc)$ if and only if there is an automorphism of $\MM$, fixing $\Pc$, and taking $a$ to $b$.

\end{fact}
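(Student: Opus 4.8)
The plan is to prove the two implications separately; the only real content is the construction of the automorphism in the forward direction, which I would obtain by extending an explicit partial elementary map. For the direction ($\Leftarrow$), suppose $\sigma \in \Aut(\MM)$ fixes every realization of $\Pc$ and satisfies $\sigma(a) = b$. One notes that for any formula $\varphi(\ox,\oy)$ and any finite tuple $\oc$ of realizations of $\Pc$, applying $\sigma$ yields $\MM \models \varphi(a,\oc)$ if and only if $\MM \models \varphi(b,\sigma(\oc))$, and $\sigma(\oc) = \oc$; since $\tp(a/\Pc)$ and $\tp(b/\Pc)$ are exactly the sets of such formulas (over finite tuples of realizations of $\Pc$), they coincide.

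For the direction ($\Rightarrow$), I would consider the map $f$ with domain $\Pc(\MM) \cup \{a\}$ that is the identity on $\Pc(\MM)$ and sends $a$ to $b$. The key step is to check that $f$ is elementary: an arbitrary formula over $\dom(f)$ has the form $\varphi(\ox,\oc)$ for a finite tuple $\oc$ from $\Pc(\MM)$, evaluated either at $a$ or at realizations of $\Pc$; in the first case $\MM \models \varphi(a,\oc) \iff \MM \models \varphi(b,\oc)$ is precisely the hypothesis $\tp(a/\Pc) = \tp(b/\Pc)$, and in the second case $f$ acts as the identity on the relevant arguments, so the formula is automatically preserved. Hence $f$ is elementary, and since $\MM$ is a monster model it is strongly homogeneous, so $f$ extends to an automorphism $\sigma$ of $\MM$; by construction $\sigma$ fixes $\Pc(\MM)$ pointwise and $\sigma(a) = b$.

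I expect the only subtle point to be this last extension step, since $\dom(f) \supseteq \Pc(\MM)$ need not be a small set: one either appeals to the convention that $\MM$ is a proper-class monster, homogeneous over all of its subsets (the transfinite back-and-forth going through because the $f$-image of each relevant finitary type is consistent, $f$ being elementary), or else works inside a sufficiently saturated and strongly homogeneous set model. Either way this is entirely routine, and I would relegate it to a remark rather than carry out the back-and-forth explicitly.
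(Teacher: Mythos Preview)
Your proposal is correct and is precisely the standard argument the paper defers to: the paper gives no proof of its own but simply points to Lemma~10.1.5 of Tent--Ziegler, whose proof is exactly this partial elementary map extended by a back-and-forth. The size issue you correctly flag is the reason an ``adaptation'' is needed at all; in the ambient stable setting it is resolved because $\Pc$ is automatically stably embedded (so the types arising in the back-and-forth are determined over small subsets of $\Pc(\MM)$), and it would be worth naming that explicitly rather than calling the set-model case routine.
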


This can be proven adapting the proof of Lemma 10.1.5 in \cite{tent2012course}.

The main algebraic objects considered in this paper are groupoids. We recall their definition: 

\begin{definition}

A groupoid $\G$ is a non-empty category such that every morphism is invertible. 

\end{definition}

Therefore, a groupoid consists of two sets: a set of objects $\Ob(\G)$, and a set of morphisms $\Mor(\G)$. These are equipped with the partial composition on morphisms, and the domain and codomain maps. Moreover, for each object $a$, there is an identity map $\id_a \in \Mor(a,a)$.

Groupoids generalize groups. Indeed, every object of a groupoid $\G$ gives rise to the group $\Mor(a,a)$, called the isotropy group of $a$. But we also have the extra morphisms $\Mor(a,b)$, for any $a,b \in \Ob(\G)$. Remark that a group is then exactly a groupoid with only one object.

The set $\Mor(a,b)$ could be empty if $a \neq b$. This will actually have some meaningful model-theoretic content, and we can define: 

\begin{definition}

If $\G$ is a groupoid and $a \in \Ob(\G)$, then the connected component of $a$ is the set $\lbrace b \in \Ob(\G): \Mor(a,b) \neq \emptyset \rbrace$. A groupoid is connected if it has only one connected component, and totally disconnected if the connected component of any object is itself.

\end{definition}

Since we are interested in definable, or type-definable objects, we need to define these notions for groupoids.

\begin{definition}

A groupoid $\G$ is definable if the sets $\Ob(\G)$ and $\Mor(\G)$ are definable, and the composition, domain, codomain and inverse maps are definable. It is type-definable is these sets and maps are type-definable.

\end{definition}

\bigskip
\section{The construction of a groupoid}
\label{construction}
\noindent

Let $q \in S(\emptyset)$, a family of partial types $\Pc$ over $\emptyset$, and an $\emptyset$-definable function $\pi$, whose domain contains $q(\MM)$, such that $q$ is relatively $\Pc$-internal via $\pi$.

Remark that for any $a \models q$, the type $\tp(a/\pi(a))$ is implied by $q(x) \cup \lbrace \pi(x) = \pi(a) \rbrace$. We will denote this type $q_{\pi(a)}$. To ease notation, if $\oa$ is a tuple of realizations of $q$ with same image under $\pi$, we will denote $\pi(\oa)$ their common image. 

Recall that there is a groupoid $\G$, whose objects are given by $\pi(q)(\MM)$, and morphisms $\Mor(\pi(a),\pi(b))$ by the set of bijections from $q _{\pi(a)}(\MM)$ to $q _{\pi(b)}(\MM)$, induced by automorphisms of $\MM$ fixing $\Pc$ pointwise, and taking $\pi(a)$ to $\pi(b)$. Our goal is to prove this groupoid, as well as its action on realizations of $q$ (see in the introduction for a definition of a groupoid action) are $\emptyset$-type-definable. We now start the proof, which follows closely the proof of Theorem 7.4.8 from \cite{pillay1996geometric}:

\begin{proof}[Proof of Theorem~\ref{bindoid}]

First note that the objects are the $\emptyset$-type-definable set $\pi(q)$. So what we have to show is that the set of morphisms is $\emptyset$-type-definable, as well as domain and codomain maps, and composition. 

Note that since each $\pi$-fiber is $\Pc$-internal, we can apply Lemma \ref{funsys} to any of them, so each type $q_{\pi(a)}$ has a fundamental system of solution. The first step of the proof is to show that these fundamental systems can be chosen uniformly, in the following sense: 

\begin{claim}
\label{unif}

There exist a type $r$ over $\emptyset$, a partial $\emptyset$-definable function $f(y, z_1, \cdots,z_n)$, a sequence $\Psi_1 ,\cdots ,\Psi_n$ of partial types in $\Pc$. These satisfy that for each $\pi(a) \models \pi(q)$, there is $\oa \models r$ such that $\pi(a) = \pi(\oa)$, and for any other $a' \models q_{\pi(a)}$, there are $c_i$ realizing $\Psi_i$, for $i =  1 \cdots n$, with $a' = f(\oa, c_1, \cdots ,c_n)$.

\end{claim}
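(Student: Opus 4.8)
The plan is to apply Lemma~\ref{funsys} fiberwise and then use compactness plus an automorphism argument to make the data uniform. First I would pick a single realization $a_0 \models q$ and apply Lemma~\ref{funsys} to the stationary $\Pc$-internal type $q_{\pi(a_0)} = \tp(a_0/\pi(a_0))$. This yields a $\pi(a_0)$-definable partial function $g(y_1,\dots,y_m,z_1,\dots,z_n)$, realizations $a_0^1,\dots,a_0^m$ of $q_{\pi(a_0)}$, and partial types $\Psi_1,\dots,\Psi_n$ in $\Pc$ (note: the $\Psi_i$ are over $\emptyset$, so they do not depend on the fiber) such that every realization of $q_{\pi(a_0)}$ is $g(a_0^1,\dots,a_0^m,\bar c\,)$ for suitable $c_i \models \Psi_i$. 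Since $g$ is $\pi(a_0)$-definable, write $g(\bar y,\bar z) = h(\bar y,\bar z,\pi(a_0))$ for an $\emptyset$-definable $h$, and absorb the parameter $\pi(a_0)$ into the tuple by setting $\oa := (a_0^1,\dots,a_0^m)$ (note $\pi(\oa) = \pi(a_0)$ since each $a_0^i \models q_{\pi(a_0)}$) and letting $f(y,z_1,\dots,z_n) := h(y,\bar z,\pi(y))$, which is $\emptyset$-definable; here $y$ is the variable for the whole tuple $\oa$. Let $r := \tp(\oa/\emptyset)$.

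Next I would transport this to an arbitrary fiber. Given any $\pi(a) \models \pi(q)$, pick $a \models q_{\pi(a)}$; then $\tp(a/\emptyset) = q = \tp(a_0/\emptyset)$, so there is an automorphism $\tau \in \Aut(\MM)$ with $\tau(a_0) = a$, hence $\tau(\pi(a_0)) = \pi(a)$. Applying $\tau$ to the fiberwise statement for $a_0$: the tuple $\tau(\oa)$ realizes $r$ and satisfies $\pi(\tau(\oa)) = \pi(a)$, and for every $a' \models q_{\pi(a)}$ we have $a' = \tau(g(\oa,\bar c\,)) = h(\tau(\oa),\bar c,\pi(a)) = f(\tau(\oa),\bar c\,)$ for appropriate $c_i \models \Psi_i$ (using that the $\Psi_i$ are $\emptyset$-invariant, so $\tau$ maps $\Psi_i(\MM)$ to itself). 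This gives exactly the conclusion of the claim with witness $\tau(\oa) \models r$ for the fiber over $\pi(a)$.

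The one point requiring care — and the main obstacle — is making sure the function $f$, the type $r$, and the types $\Psi_i$ genuinely do not depend on the fiber, i.e. that the single choice made over the fiber of $a_0$ really does propagate. This is where the homogeneity of the monster model is essential: the automorphism $\tau$ exists precisely because $q \in S(\emptyset)$ is a complete type over $\emptyset$, so all realizations of $q$ are conjugate. The $\Psi_i$ cause no trouble since Lemma~\ref{funsys} already produces them among the fixed family $\Pc$ over $\emptyset$. The only mild subtlety is bookkeeping: ensuring that after absorbing $\pi(a_0)$ into the tuple $\oa$, the resulting $f$ is genuinely $\emptyset$-definable (it is, since $\pi$ is $\emptyset$-definable and $\pi(a_0) = \pi(\oa)$ is recoverable from $\oa$), and that $\tp(\tau(\oa)/\emptyset) = \tp(\oa/\emptyset) = r$ regardless of which $a \models q_{\pi(a)}$ and which $\tau$ we chose. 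No compactness argument is actually needed beyond what is already packaged in Lemma~\ref{funsys}; the whole claim is essentially "apply Lemma~\ref{funsys} once, then conjugate."
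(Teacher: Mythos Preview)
Your proposal is correct and follows essentially the same route as the paper: apply Lemma~\ref{funsys} to a single fiber $q_{\pi(a_0)}$, absorb the parameter $\pi(a_0)$ into the tuple $\oa$ so that $f$ becomes $\emptyset$-definable, set $r = \tp(\oa/\emptyset)$, and then propagate to every other fiber by an automorphism of $\MM$. The paper compresses your entire transport paragraph into the two words ``by invariance,'' but the content is identical; your only slip is notational (in the displayed line $a' = \tau(g(\oa,\bar c)) = h(\tau(\oa),\bar c,\pi(a))$ the middle argument should be $\tau(\bar c)$, which you immediately correct by relabeling).
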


\begin{proof}

Let $\pi(a)$ be a realization of $\pi(q)$. Applying Lemma \ref{funsys} to $q_{\pi(a)}$ yields a partial $\pi(a)$-definable function $f(y_1, \cdots, y_m, z_1, \cdots,z_n)$, a sequence $a_1, \cdots, a_m$ of realizations of $q_{\pi(a)}$, and a sequence $\Psi_1 ,\cdots ,\Psi_n$ of partial types in $\Pc$, such that $q_{\pi(a)} \subset f(\oa, \Psi_1(\MM), \cdots , \Psi_n(\MM))$. 

Denote $\oa = (a_1, \cdots, a_m)$, and $r = \tp(\oa/\emptyset)$. Remark that since $\pi(\oa) = \pi(a) \in \dcl(\oa)$, the function $f$ is actually $\emptyset$-definable. By invariance, we see that $f,r$ and $\Psi_1, \cdots ,\Psi_n$ satisfy the required properties.

\end{proof}

We will now fix $r, f$ be as in Claim \ref{unif}, and $\Phi(\ox) = \Psi(x_1) \cup \cdots \cup \Psi(x_n)$. Fix $\pi(a)$, $\pi(b)$ and a realization $\oa$ of $r$ in $\pi^{-1}(\pi(a))$. Consider the set $X = \lbrace (\oa,\ob): \tp(\oa) = \tp(\ob) = r, \tp(\oa/\Pc) = \tp(\ob/\Pc) \rbrace$, it is the set we will use to encode morphisms. We have: 

\begin{claim}
\label{stabemb}
The set $X$ is $\emptyset$-type-definable.
\end{claim}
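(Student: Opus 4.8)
The plan is to show that membership in $X$ is controlled by a single type over $\emptyset$ together with a $\Pc$-invariance condition that is itself type-definable, the latter being exactly the content one expects from stable embeddedness of $\Pc$. First I would note that the condition $\tp(\oa) = \tp(\ob) = r$ is plainly $\emptyset$-type-definable, since $r \in S(\emptyset)$ and we may simply write $r(\oy_1) \cup r(\oy_2)$ in the appropriate variables. So the only real issue is to express the clause $\tp(\oa/\Pc) = \tp(\ob/\Pc)$ by a partial type over $\emptyset$ in the pair $(\oa, \ob)$.

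The key step is to invoke Fact~\ref{chuddcl}: for the tuple $\oa$ we have $\tp(\oa/\dcl(\oa) \cap \Pc) \models \tp(\oa/\Pc)$, and symmetrically for $\ob$. Now $\dcl(\oa) \cap \Pc$ consists of (tuples of) realizations of $\Pc$ that lie in $\dcl(\oa)$; since $\oa \models r$ with $r$ a fixed type over $\emptyset$, the collection of $\emptyset$-definable functions $g$ such that $g(\oa)$ realizes some type in $\Pc$ (equivalently, such that $r(\oy) \models ``g(\oy)\ \text{realizes}\ \Pc"$) is a fixed, $\oa$-independent family $\{g_j : j \in J\}$. The point is then that $\tp(\oa/\Pc) = \tp(\ob/\Pc)$ holds if and only if $g_j(\oa)$ and $g_j(\ob)$ have the same type over $\Pc$ for every $j \in J$ — but each $g_j(\oa)$, $g_j(\ob)$ is a realization of $\Pc$ itself, so by Fact~\ref{tenzig} ``same type over $\Pc$'' for such realizations is detected by an automorphism fixing $\Pc$, and for tuples of realizations of $\Pc$ this is in turn equivalent to equality of types over $\emptyset$ in the $\Pc$-sorts, which is $\emptyset$-type-definable. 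Concretely, I would write
\[
X = \Bigl\{ (\oa, \ob) : r(\oa) \cup r(\ob) \cup \bigcup_{j \in J} \bigl( \text{``}\tp(g_j(\oa)/\emptyset) = \tp(g_j(\ob)/\emptyset)\text{''} \bigr) \Bigr\},
\]
and argue via the two Facts that this partial type over $\emptyset$ defines exactly $X$: the forward inclusion is immediate from invariance, and the reverse inclusion uses Fact~\ref{chuddcl} to promote agreement on all the $g_j(\oa)$ to full agreement of $\tp(\oa/\Pc)$ and $\tp(\ob/\Pc)$, then pull back an automorphism fixing $\Pc$ via Fact~\ref{tenzig}.

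The main obstacle I anticipate is the bookkeeping around $\dcl(\oa) \cap \Pc$: one must be careful that the family $\{g_j\}$ really can be chosen uniformly in $\oa$ (this is where fixing $r \in S(\emptyset)$ is essential, so that which definable functions land in $\Pc$ depends only on $r$, not on the particular realization), and that "same type over $\emptyset$" of $g_j(\oa)$ and $g_j(\ob)$, ranging over all $j$, genuinely captures $\tp(\oa/\Pc)=\tp(\ob/\Pc)$ rather than something weaker — for which Fact~\ref{chuddcl} is exactly the tool, since it says the $\Pc$-type is implied by the type over the $\Pc$-points in the definable closure. Once these points are nailed down, the type-definability of $X$ is just the observation that a (possibly infinite) conjunction of $\emptyset$-type-definable conditions is $\emptyset$-type-definable.
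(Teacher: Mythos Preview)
Your overall strategy—invoke Fact~\ref{chuddcl} and encode $\dcl(\oa)\cap\Pc$ via a fixed family of $\emptyset$-definable functions—is exactly the paper's approach. However, there is a genuine error in the passage from ``same type over $\Pc$'' to ``same type over $\emptyset$.''

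For a tuple $c$ that is itself a realization of $\Pc$, the type $\tp(c/\Pc)$ contains the formula $x=c$. Hence $\tp(g_j(\oa)/\Pc)=\tp(g_j(\ob)/\Pc)$ is equivalent to the \emph{equality} $g_j(\oa)=g_j(\ob)$, not merely to $\tp(g_j(\oa)/\emptyset)=\tp(g_j(\ob)/\emptyset)$. The latter condition is vacuous: since $\oa,\ob$ both realize the fixed type $r$ and each $g_j$ is $\emptyset$-definable, $g_j(\oa)$ and $g_j(\ob)$ automatically have the same $\emptyset$-type. Your displayed description of $X$ therefore defines all of $r(\MM)\times r(\MM)$, which is certainly not $X$ in general.

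The repair is simple and coincides with what the paper does: replace the clause $\tp(g_j(\oa)/\emptyset)=\tp(g_j(\ob)/\emptyset)$ by the literal equality $g_j(\oa)=g_j(\ob)$. This is still an $\emptyset$-type-definable condition (an infinite conjunction of equations), and with it your argument works. If $g_j(\oa)=g_j(\ob)$ for all $j$, then $\dcl(\oa)\cap\Pc=\dcl(\ob)\cap\Pc$; calling this common set $C$, any formula over $C$ has the form $\phi(\ox,g_j(\ox))$ for some $j$, so $\tp(\oa)=\tp(\ob)=r$ forces $\tp(\oa/C)=\tp(\ob/C)$, and Fact~\ref{chuddcl} promotes this to $\tp(\oa/\Pc)=\tp(\ob/\Pc)$. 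Conversely, if $\tp(\oa/\Pc)=\tp(\ob/\Pc)$ then by Fact~\ref{tenzig} there is an automorphism fixing $\Pc$ pointwise with $\sigma(\oa)=\ob$; it sends $g_j(\oa)$ to $g_j(\ob)$ but also fixes $g_j(\oa)\in\Pc$, so $g_j(\oa)=g_j(\ob)$.
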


\begin{proof}

 Fact \ref{chuddcl} yields that $\tp(\oa/\dcl(\oa) \cap \Pc) \models \tp(\oa/\Pc)$. Consider the set $\lbrace \lambda_i(\ox): i \in I \rbrace$ of partial $\emptyset$-definable functions defined at $\oa$ with values in $\Pc$ (and these are the same at every realization of $r$). Then $\tp(\oa/\Pc) = \tp(\ob/\Pc)$ if and only if $\lambda_i(\oa) = \lambda_i(\ob)$ for all $i \in I$. Therefore $X = \lbrace (\oa,\ob): \tp(\oa) = \tp(\ob) = r, \lambda_i(\oa) = \lambda_i(\ob) \text{ for all } i \in I \rbrace$, which is an $\emptyset$-type-definable set.
\end{proof}

Let $r_{\oa} = \tp(\oa /\Pc)$. We then have the following: 

\begin{claim}
\label{biject}
The map from $\Mor(\pi(a), \pi(b))$ to $r_{\oa}(\MM) \cap \lbrace \ox: \pi(\ox) = \pi(b) \rbrace$ taking $\sigma$ to $\sigma(\oa)$ is a bijection.
\end{claim}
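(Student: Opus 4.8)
The claim asserts that the map $\sigma \mapsto \sigma(\oa)$ from $\Mor(\pi(a),\pi(b))$ to $r_{\oa}(\MM) \cap \{\ox : \pi(\ox) = \pi(b)\}$ is a bijection. I would prove this in three parts: well-definedness (the image really lands in the target set), injectivity, and surjectivity. Throughout, the key structural inputs are Claim \ref{unif} (the uniform fundamental system $f, r, \Psi_i$) and Fact \ref{tenzig} (two tuples have the same type over $\Pc$ iff some automorphism fixing $\Pc$ maps one to the other).

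\smallskip
\textbf{Well-definedness.} Let $\sigma \in \Mor(\pi(a),\pi(b))$, so by definition $\sigma$ is (the restriction to $q_{\pi(a)}(\MM)$ of) an automorphism $\hat\sigma$ of $\MM$ fixing $\Pc$ pointwise and sending $\pi(a)$ to $\pi(b)$. Since $\oa \models r$ is a tuple of realizations of $q_{\pi(a)}$, I need $\hat\sigma(\oa) \models r$ as well: this is immediate because $r = \tp(\oa/\emptyset)$ and $\hat\sigma$ is an automorphism, so $\hat\sigma(\oa) \equiv \oa$. Also $\tp(\hat\sigma(\oa)/\Pc) = \tp(\oa/\Pc) = r_{\oa}$ because $\hat\sigma$ fixes $\Pc$ pointwise — this is exactly the content of Fact \ref{tenzig} in the easy direction. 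Finally $\pi(\hat\sigma(\oa)) = \hat\sigma(\pi(\oa)) = \hat\sigma(\pi(a)) = \pi(b)$ since $\pi$ is $\emptyset$-definable and $\hat\sigma(\pi(a)) = \pi(b)$. So $\sigma(\oa) = \hat\sigma(\oa)$ lies in $r_{\oa}(\MM) \cap \{\ox : \pi(\ox) = \pi(b)\}$, as required.

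\smallskip
\textbf{Injectivity.} Suppose $\sigma_1(\oa) = \sigma_2(\oa)$. I want $\sigma_1 = \sigma_2$ as bijections $q_{\pi(a)}(\MM) \to q_{\pi(b)}(\MM)$, i.e.\ they agree on every $a' \models q_{\pi(a)}$. By Claim \ref{unif}, any such $a'$ can be written $a' = f(\oa, \oc)$ for some tuple $\oc = (c_1, \dots, c_n)$ with $c_i \models \Psi_i$. Since $\sigma_j$ is induced by an automorphism $\hat\sigma_j$ fixing $\Pc$ pointwise and $f$ is $\emptyset$-definable, $\sigma_j(a') = \hat\sigma_j(f(\oa,\oc)) = f(\hat\sigma_j(\oa), \hat\sigma_j(\oc)) = f(\sigma_j(\oa), \oc)$. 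As $\sigma_1(\oa) = \sigma_2(\oa)$, we get $\sigma_1(a') = \sigma_2(a')$. Hence $\sigma_1 = \sigma_2$. (This is really the point of insisting that $\oa$ be a \emph{fundamental} system: it determines the whole automorphism-induced bijection on the fiber.)

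\smallskip
\textbf{Surjectivity.} This is the step I expect to require the most care. Let $\od \in r_{\oa}(\MM) \cap \{\ox : \pi(\ox) = \pi(b)\}$; so $\od \models r$, $\tp(\od/\Pc) = \tp(\oa/\Pc)$, and $\pi(\od) = \pi(b)$. By Fact \ref{tenzig}, there is an automorphism $\hat\sigma$ of $\MM$ fixing $\Pc$ pointwise with $\hat\sigma(\oa) = \od$. Then $\hat\sigma(\pi(a)) = \hat\sigma(\pi(\oa)) = \pi(\hat\sigma(\oa)) = \pi(\od) = \pi(b)$, so $\hat\sigma$ takes $\pi(a)$ to $\pi(b)$, and it also maps $q_{\pi(a)}(\MM)$ onto $q_{\pi(b)}(\MM)$ (it preserves $q$ and sends the fiber over $\pi(a)$ to the fiber over $\pi(b)$). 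Thus $\hat\sigma$ restricts to an element $\sigma \in \Mor(\pi(a),\pi(b))$ with $\sigma(\oa) = \od$. The one subtlety to address is that the definition of $\Mor(\pi(a),\pi(b))$ refers to bijections $q_{\pi(a)}(\MM) \to q_{\pi(b)}(\MM)$ induced by such automorphisms, and I should note that the restriction of $\hat\sigma$ to $q_{\pi(a)}(\MM)$ is indeed a bijection onto $q_{\pi(b)}(\MM)$ — its inverse is the restriction of $\hat\sigma^{-1}$, which fixes $\Pc$ pointwise and takes $\pi(b)$ to $\pi(a)$. This completes the proof of the bijection.
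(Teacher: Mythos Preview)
Your proof is correct and follows essentially the same approach as the paper's: injectivity via the fundamental system and the formula $a' = f(\oa,\oc)$ with $\oc$ fixed by $\sigma$, and surjectivity via Fact~\ref{tenzig}. You add an explicit well-definedness check and a remark on why the restriction is a bijection, which the paper leaves implicit, but the argument is otherwise identical.
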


\begin{proof}

First injectivity: suppose $\sigma(\oa) = \tau(\oa)$. Every element of $\pi^{-1}(\pi(a)) \cap q(\MM)$ is written as $f(\oa,c)$, for some $c \models \Phi$, and $\sigma(f(\oa,c)) = f(\sigma(\oa),\sigma(c)) = f(\tau(a),c) = \tau(f(\oa,c))$, so $\tau = \sigma$.

For surjectivity, given $\ob \models r_{\oa}$, since $\oa$ and $\ob$ have the same type over $\Pc$, by Fact \ref{tenzig}, there is an automorphism of the monster model, fixing $\Pc$, and taking $\oa$ to $\ob$. The restriction of this automorphism to $q_{\pi(a)}(\MM)$ belongs to $\Mor(\pi(a), \pi(b))$.

\end{proof}

By Claim \ref{biject}, for any $(\oa,\ob) \in X$, there is a unique $\sigma \in \Mor(\pi(\oa), \pi(\ob))$ such that $\sigma(\oa) = \ob$. And for any $\oa \models r$ and $\sigma \in \Mor(\pi(\oa), \pi(\ob))$, we also have $(\oa,\sigma(\oa)) \in X$. However, this correspondence may not be injective: for a $\sigma \in \Mor(\pi(a),\pi(b))$, there are multiple elements of $X$ corresponding to it. We will solve this problem with an equivalence relation.

\begin{claim}
\label{formula}
There is a formula $\psi(\ox_1,\ox_2,y,z)$ such that for any $\sigma \in \Mor(\G)$, any $\oa \models r$, any $a \models q$ such that $\dom(\sigma) = \pi(a) = \pi(\oa)$ and any $b$, we have $\models \psi(\oa,\sigma(\oa),a,b)$ if and only if $b = \sigma(a)$. 


\end{claim}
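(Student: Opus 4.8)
The plan is to read a candidate formula off Claim~\ref{unif} and then cut the partial type $\Phi$ down to a single formula by compactness. Fix $\oa \models r$ and $\sigma \in \Mor(\G)$ with $\dom(\sigma) = \pi(\oa)$, write $\oa' := \sigma(\oa)$, and let $\alpha$ be an automorphism of $\MM$ fixing $\Pc$ pointwise and inducing $\sigma$, so $\alpha(\oa) = \oa'$. The key point is that $\sigma$ has an explicit description on $q_{\pi(\oa)}(\MM)$ in terms of $f$: for $a \models q_{\pi(\oa)}$, Claim~\ref{unif} provides $c \models \Phi$ with $a = f(\oa, c)$ (any realisation of $r$ lying over a $\pi$-fibre is a fundamental system of solutions for that fibre, witnessed by the same $f$ and $\Psi_i$, since these are over $\emptyset$), and then
\[
\sigma(a) = \alpha(a) = \alpha(f(\oa,c)) = f(\alpha(\oa), \alpha(c)) = f(\oa', c),
\]
because $f$ is $\emptyset$-definable and $\alpha$ fixes $\Pc$, hence $c$, pointwise. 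This suggests setting $\psi(\ox_1, \ox_2, y, z) := \exists c\,\bigl(\theta(c) \wedge y = f(\ox_1, c) \wedge z = f(\ox_2, c)\bigr)$ for a suitable formula $\theta$, where $y = f(\ox_i, c)$ abbreviates the defining formula of the graph of $f$.

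The obstacle --- and the one step I expect to require genuine care --- is that the naive choice $\theta = \Phi$ is a partial type rather than a formula, and it cannot simply be replaced by an arbitrary finite approximation: a tuple realising only finitely much of $\Phi$ need not be fixed by $\alpha$, and then the computation above collapses. To locate the right $\theta$, note that the same computation, run for the automorphism supplied by Fact~\ref{tenzig} from membership in the $\emptyset$-type-definable set $X$ of Claim~\ref{stabemb} (recall $(\ox_1,\ox_2) \in X$ entails $\tp(\ox_1/\Pc) = \tp(\ox_2/\Pc)$), shows that the partial type asserting
\[
(\ox_1, \ox_2) \in X,\quad c \models \Phi,\quad c' \models \Phi,\quad f(\ox_1, c) = f(\ox_1, c'),\quad f(\ox_2, c) \neq f(\ox_2, c')
\]
is inconsistent. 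By compactness some finite subconjunction $\theta(c)$ of $\Phi(c)$ --- together with finite parts of the type defining $X$ and of $\Phi(c')$, which we may retain --- already yields
\[
\bigl((\ox_1, \ox_2) \in X \ \wedge\ \theta(c) \ \wedge\ c' \models \Phi \ \wedge\ f(\ox_1, c) = f(\ox_1, c')\bigr)\ \implies\ f(\ox_2, c) = f(\ox_2, c').
\]
Fix such a $\theta$ and use it to define $\psi$ as above.

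It remains to verify the two directions on the stated domain, where $\ox_1 = \oa$, $\ox_2 = \oa' = \sigma(\oa)$ --- so $(\oa,\oa') \in X$, as $\sigma$ is induced by an automorphism fixing $\Pc$ and $\oa' \models r$ --- and $y = a \models q_{\pi(\oa)}$. If $b = \sigma(a)$: choose $c \models \Phi$ with $a = f(\oa, c)$; then $b = f(\oa', c)$ by the first computation, and $c \models \theta$ since $\theta$ is a subconjunction of $\Phi$, so $\models \psi(\oa, \oa', a, b)$. Conversely, if $\models \psi(\oa, \oa', a, b)$: fix a witness $c_0 \models \theta$ with $a = f(\oa, c_0)$ and $b = f(\oa', c_0)$, and a genuine $c \models \Phi$ with $a = f(\oa, c)$; since $(\oa, \oa') \in X$ and $f(\oa, c_0) = a = f(\oa, c)$, the displayed implication gives $b = f(\oa', c_0) = f(\oa', c) = \sigma(a)$. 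Apart from the compactness step, this is routine bookkeeping with Claims~\ref{unif} and \ref{stabemb} and Fact~\ref{tenzig}; the care goes into identifying the inconsistent partial type above and checking its inconsistency, i.e.\ that $\Phi$-tuples are fixed by the relevant automorphisms and that $f$, being $\emptyset$-definable, commutes with them.
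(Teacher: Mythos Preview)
Your proof is correct and follows essentially the same route as the paper's: define $\psi$ as $\exists w\,(\theta(w)\wedge f(\ox_1,w)=y\wedge f(\ox_2,w)=z)$, obtain $\theta$ from $\Phi$ by compactness from the inconsistency of the same configuration, and verify the two directions by comparing a $\theta$-witness against a genuine $\Phi$-witness. The only cosmetic difference is that the paper phrases the pre-compactness statement symmetrically (an ``if and only if'' between $f(\oa,c_1)=f(\oa,c_2)$ and $f(\ob,c_1)=f(\ob,c_2)$) and simultaneously cuts the index set $I$ of the $\lambda_i$ down to a finite $J$, whereas you go straight for the one-sided implication and keep the full condition $(\ox_1,\ox_2)\in X$; your version is what is actually used in the verification, so nothing is lost.
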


\begin{proof}

By the proofs of Claim \ref{stabemb} and Claim \ref{unif}, if $\oa,\ob$ realise $r$, with $\lambda_i(\oa) = \lambda_i(\ob)$ for all $i$, and $c_1,c_2$ realise the partial type $\Phi$ of Claim \ref{unif}, then $f(\oa,c_1) = f(\oa,c_2)$ if and only if $f(\ob,c_1) = f(\ob,c_2)$ (and these are well defined). By compactness, there is a formula $\theta(w)$ and a finite subset $J \subset I$ such that the previous property is true replacing $\Phi$ by $\theta$ and $I$ by $J$. 

Let the formula $\psi(\ox_1,\ox_2,y,z)$ be $\exists w (f(\ox_1,w) = y \wedge f(\ox_2,w) = z \wedge \theta(w))$. We now check that this formula works. Suppose first that $\oa,\sigma(\oa),a,b$ satisfy it. Then there is $c \models \theta(w)$ such that $f(\oa,c) = a$ and $f(\sigma(\oa),c) = b$. 

But as $a \models q$, there is also $d \models \Phi$ such that $f(\oa, d) = a$. Since $d \models \Phi$, it is a realization of $\Pc$, hence $\sigma(a) = \sigma(f(\oa,d)) = f(\sigma(\oa),d)$. 

So $f(\oa, c) = a = f(\oa,d)$, the tuple $d$ is a realization of $\Phi$, and $f(\sigma(\oa,d)) = \sigma(a)$. By choice of $\psi$, this implies $b = f(\sigma(\oa),c) = f(\sigma(\oa),d) = \sigma(a)$.

Conversely, suppose that $b = \sigma(a)$. Since $\oa \models r$ and $\dom(\sigma) = \pi(a) = \pi(\oa)$, there is $c \models \Phi$ such that $f(\oa,c) = a$. Therefore $b = \sigma(f(\oa,c)) = f(\sigma(\oa), c)$, so we can take $c$ to be the $w$ of the formula.

\end{proof}

Now define an equivalence relation $E$ on $X$ as $(\oa_1,\ob_1)E(\oa_2,\ob_2)$ if and only if $\pi(\oa_1) = \pi(\oa_2)$ and for some $\sigma \in \Mor(\G)$, we have $\sigma(\oa_1) = \ob_1$ and $\sigma(\oa_2) = \ob_2$. So  $(\oa_1,\ob_1)E(\oa_2,\ob_2)$ if and only if the tuples $(\oa_1,\ob_1)$ and $(\oa_2,\ob_2)$ represent the same morphism $\sigma$. Then the following is true:

\begin{claim}
\label{eisdef}
$E$ is relatively $\emptyset$-definable on $X \times X$.
\end{claim}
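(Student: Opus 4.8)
The plan is to express the relation $E$ using the formula $\psi$ from Claim~\ref{formula}, together with the type-definability of $X$ established in Claim~\ref{stabemb}. The key observation is that although the definition of $E$ quantifies over morphisms $\sigma \in \Mor(\G)$ --- objects we have not yet shown to be type-definable --- we can replace that quantification by a condition that only involves the ambient monster model and the formula $\psi$. Indeed, given $(\oa_1,\ob_1),(\oa_2,\ob_2) \in X$ with $\pi(\oa_1) = \pi(\oa_2)$, by Claim~\ref{biject} each pair $(\oa_i,\ob_i)$ determines a \emph{unique} morphism $\sigma_i \in \Mor(\pi(\oa_i),\pi(\ob_i))$ with $\sigma_i(\oa_i) = \ob_i$; and $(\oa_1,\ob_1) \mathrel{E} (\oa_2,\ob_2)$ exactly says $\sigma_1 = \sigma_2$. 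So the first step is to show that $\sigma_1$ and $\sigma_2$ are equal as bijections on $q_{\pi(a)}(\MM)$ if and only if they agree on a single well-chosen realization $a \models q_{\pi(\oa_1)}$, which in turn can be read off from $\psi$: concretely, $(\oa_1,\ob_1) \mathrel{E} (\oa_2,\ob_2)$ holds iff for every $a, b$ with $\pi(a) = \pi(\oa_1)$, we have $\models \psi(\oa_1,\ob_1,a,b) \iff \models \psi(\oa_2,\ob_2,a,b)$.

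The second step is to verify this equivalence. One direction is immediate: if $\sigma_1 = \sigma_2 =: \sigma$, then by Claim~\ref{formula} both $\psi(\oa_1,\ob_1,a,b)$ and $\psi(\oa_2,\ob_2,a,b)$ hold iff $b = \sigma(a)$, so the biconditional is satisfied. For the converse, suppose the $\psi$-conditions agree for all $a,b$. Pick any $a \models q_{\pi(\oa_1)}$ and let $b = \sigma_1(a)$; then $\models \psi(\oa_1,\ob_1,a,b)$, hence $\models \psi(\oa_2,\ob_2,a,b)$, which by Claim~\ref{formula} forces $b = \sigma_2(a)$, so $\sigma_1(a) = \sigma_2(a)$ for all $a$ in the fiber, i.e.\ $\sigma_1 = \sigma_2$. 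Thus
\begin{align*}
(\oa_1,\ob_1) \mathrel{E} (\oa_2,\ob_2) \iff{}& (\oa_1,\ob_1),(\oa_2,\ob_2) \in X,\ \pi(\oa_1) = \pi(\oa_2),\\
&\text{and } \forall a\, \forall b\, \big( \pi(a) = \pi(\oa_1) \to (\psi(\oa_1,\ob_1,a,b) \leftrightarrow \psi(\oa_2,\ob_2,a,b)) \big).
\end{align*}
The condition $(\oa_1,\ob_1),(\oa_2,\ob_2) \in X$ is type-definable by Claim~\ref{stabemb}, the condition $\pi(\oa_1) = \pi(\oa_2)$ is definable since $\pi$ is $\emptyset$-definable, and the last clause is a first-order formula (in fact a single formula, since $\psi$ is a single formula and the range of $a$ is cut out by $q$, but note we only need the implication to hold for $a \models q_{\pi(\oa_1)}$, which is handled by the hypothesis $\pi(a) = \pi(\oa_1)$ restricted to the type-definable set $q(\MM)$ --- so strictly this clause is a conjunction of the formula above with membership in the partial type $q$, giving a relatively type-definable condition). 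Intersecting, $E$ is relatively $\emptyset$-definable on $X \times X$, as required.

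The main obstacle I anticipate is the bookkeeping around the quantifier ``$\forall a$'': since $a$ is required to realize the partial type $q$, the displayed condition is, a priori, only \emph{type}-definable, not definable, on $X \times X$ --- but since $X$ itself is only type-definable this is harmless for the claim as stated (``relatively $\emptyset$-definable on $X \times X$''). One should double-check that the implication ``$\pi(a) = \pi(\oa_1) \to \cdots$'' together with $a \models q$ genuinely ranges over all of $q_{\pi(\oa_1)}(\MM)$ and nothing more, which is exactly the remark at the start of Section~\ref{construction} that $q_{\pi(a)}$ is implied by $q(x) \cup \{\pi(x) = \pi(a)\}$. A secondary point to be careful about: the equivalence relation $E$ as defined presupposes that such a $\sigma$ exists simultaneously for both pairs; but given $(\oa_1,\ob_1),(\oa_2,\ob_2) \in X$ with $\pi(\oa_1)=\pi(\oa_2)$, the morphisms $\sigma_1,\sigma_2$ always exist individually by Claim~\ref{biject}, and $E$ simply records their equality --- so there is no hidden existential obstruction, and the reformulation above is faithful.
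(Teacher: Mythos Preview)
Your reformulation of $E$ in terms of $\psi$ is correct, but the conclusion that this yields a relatively $\emptyset$-definable condition has a genuine gap. The displayed condition requires the biconditional to hold for all $a$ realizing $q_{\pi(\oa_1)}$, i.e.\ for all $a$ in the type-definable set $q(\MM)\cap\pi^{-1}(\pi(\oa_1))$. A universal quantification over a type-definable set is \emph{not} in general type-definable: its negation (``there exists $a\models q_{\pi(\oa_1)}$ where the biconditional fails'') is what is type-definable, so your condition is only co-type-definable relative to $X\times X$. You cannot repair this by dropping the restriction to $q$ and quantifying over all $a$ with $\pi(a)=\pi(\oa_1)$, because Claim~\ref{formula} only controls $\psi$ on realizations of $q$; for $a\notin q(\MM)$ the formula $\psi$ may behave arbitrarily, so the unrestricted first-order sentence could be strictly stronger than $E$. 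Your parenthetical tries to have it both ways and ends up asserting something false.

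The paper closes this gap by a different mechanism. Instead of quantifying over all $a$ in the fiber, it asks the biconditional to hold for a \emph{single generic} $a\models q_{\pi(\oa_1)}|_{\oa_1,\oa_2,\ob_1,\ob_2}$; the nontrivial direction (that agreement at a generic point forces $\sigma_1=\sigma_2$) uses a generic realization of $r_{\pi(\oa_1)}$ as a fundamental system. Because $q_{\pi(\oa_1)}$ is stationary and hence definable over $\pi(\oa_1)$, the statement ``$\chi(a)$ holds for $a$ generic over the parameters'' is equivalent to a single formula over $\pi(\oa_1)$ (a $d_q$-defining scheme). A further argument, comparing defining schemes under automorphisms, then shows this formula can be chosen independently of $\pi(\oa_1)$, giving relative $\emptyset$-definability. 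These two ingredients---passing to a generic realization and invoking definability of stationary types---are precisely what is missing from your proposal.
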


\begin{proof}

Recall that we denote $\tp(a/\pi(a))$, for $a \models q$, by $q_{\pi(a)}$. We also denote $r_{\pi(a)} = \tp(\oa/\pi(\oa))$, for some $\oa \models r$ with $\pi(\oa) = \pi(a)$. 

We first show that $(\oa_1,\ob_1)E(\oa_2,\ob_2)$ if and only if $\pi(\oa_1) = \pi(\oa_2)$ and for any $a \models q_{\pi(\oa_1)}|_{\oa_1,\oa_2,\ob_1,\ob_2}$ we have $\forall z \psi(\oa_1,\ob_1,a,z) \leftrightarrow\psi(\oa_2,\ob_2,a,z)$. 

The left to right direction is immediate. So assume that the right-hand condition holds. There are $\sigma,\tau \in \Mor(\G)$ with $\sigma(\oa_1) = \ob_1$ and $\tau(\oa_2) = \ob_2$. Let $\oa_3 \models r_{\pi(\oa_1)}$ be independent from $\oa_1,\oa_2,\ob_1,\ob_2$. If we let $\oa_3 = (a_{3,1}, \cdots , a_{3,n})$, then by independence, we have $\forall z \psi(\oa_1,\ob_1,a_{3,i},z) \leftrightarrow\psi(\oa_2,\ob_2,a_{3,i},z)$, for all $1 \leq i \leq n$. Hence $\sigma(\oa_3) = \tau(\oa_3)$. Let $a'$ be any realization of $q_{\pi(\oa_1)}$. Then $a' = f(\oa_3,c)$ for some $c$, since $\oa_3$ is a realization of $r_{\pi(\oa_1)}$. So $\sigma(a') = \sigma(f(\oa_3,c)) = f(\sigma(\oa_3),c) = f(\tau(\oa_3),c) = \tau(a')$. This is true for any realization $a'$ of $q_{\pi(\oa_1)}$, so $\tau = \sigma$.

Notice that the right-hand condition is equivalent to a formula over $\pi(\oa_1)$ because the stationary type $q_{\pi(\oa_1)}$ is definable over $\pi(\oa_1)$. So if we fix $\pi(a) \models \pi(q)$, there is a formula $\theta_{\pi(a)}(z_1,t_1,z_2,t_2,y)$ over $\emptyset$ such that for any $\oa_1, \oa_2 \models r_{\pi(a)}$, and any $\ob_1,\ob_2$, we have $(\oa_1,\ob_1)E(\oa_2,\ob_2)$ if and only if $\theta_{\pi(a)}(\oa_1,\ob_1,\oa_2,\ob_2,\pi(a))$. A priori, this formula $\theta_{\pi(a)}$ depends on $\pi(a)$, hence we cannot yet conclude that $E$ is relatively definable, let alone relatively $\emptyset$-definable. However, if we can prove that for any $a,b \models q$ the formulas $\theta_{\pi(a)}(z_1,t_1,z_2,t_2,\pi(a))$ and $\theta_{\pi(b)}(z_1,t_1,z_2,t_2,\pi(a))$ are equivalent, we would get relative $\emptyset$-definability.

Note that the formula $\theta_{\pi(a)}$ we obtained is a defining scheme for a formula in the stationary type $q_{\pi(a)} = \tp(a/\pi(a))$. We will use this to show the desired equivalence.

Let $\pi(a),\pi(b) \models \pi(q)$, and $\sigma$ an automorphism such that $\sigma(\pi(a)) = \pi(b)$. Let $\phi(x,y,z)$ be a formula over $\emptyset$. Since $q_{\pi(a)}$ and $q_{\pi(b)}$ are definable and stationary, there are defining schemes $\theta_{\pi(a)}(z,\pi(a))$ (respectively $\theta_{\pi(b)}(z,\pi(b))$) for $\phi(x,y,z)$ and $q_{\pi(a)}$ (respectively $q_{\pi(b)}$), and the formulas $\theta_{\pi(-)}(z,y)$ are over the empty set. Now, let $\oc$ be a tuple, and $a'$ a realization of $q_{\pi(a)}\vert_{\oc}$, the unique non-forking extension of $q_{\pi(a)}$ to $\lbrace \pi(a),\oc \rbrace$. Then:

\begin{align*}
\theta_{\pi(a)}(\oc,\pi(a)) &\Leftrightarrow \phi(x,\pi(a),\oc) \in q_{\pi(a)}\vert_{\oc} \\
& \Leftrightarrow \models \phi(a',\pi(a),\oc) \\
& \Leftrightarrow \models \phi(\sigma(a'), \pi(b), \sigma(\oc)) \\
& \Leftrightarrow \phi(x,\pi(b),\sigma(\oc)) \in q_{\pi(b)}|_{\sigma(\oc)} \text{ because } \sigma(a') \models q_{\pi(b)}|_{\sigma(\oc)} \\
& \Leftrightarrow \theta_{\pi(b)}(\sigma(\oc),\pi(b)) \\
& \Leftrightarrow \theta_{\pi(b)}(\oc, \pi(a))
\end{align*}

Applying this to the formula $\forall w \psi(z_1,t_1,x,w) \leftrightarrow(\psi(z_2,t_2,x,w) \wedge \pi(z_1) = y = \pi(z_2))$ and the $q_{\pi(a)}$, where $z = (z_1,t_1,z_2,t_2)$, we obtain, for any $\pi(a) ,\pi(b)$ realizations of $\pi(q)$, that $\models \theta_{\pi(a)}(\oa_1,\ob_1,\oa_2,\ob_2,\pi(a))$ if and only if $\models \theta_{\pi(b)}(\oa_1,\ob_1,\oa_2,\ob_2,\pi(a))$. Therefore, we can fix $\pi(b)$, and use the formula $\theta _{\pi(b)}$ to obtain for any $\pi(a) \models \pi(q)$, for any $\oa_1, \oa_2 \models r_{\pi(a)}$ and any $\ob_1,\ob_2$, that $(\oa_1,\ob_1)E(\oa_2,\ob_2)$ if and only if $\theta_{\pi(b)}(\oa_1,\ob_1,\oa_2,\ob_2,\pi(a))$. So $\theta _{\pi(b)}$ is the formula defining $E$.

\end{proof}

Hence we obtain an $\emptyset$-type-definable set $X/E$. But we had, by Claim \ref{biject}, a map from $X$ to $\Mor(\G)$. And $(\oa_1,\ob_1)E(\oa_2,\ob_2)$ if and only if they have the same image under this map. Therefore we have obtained a bijection from $X/E$ to $\Mor(\G)$. Notice that this also yields $\emptyset$-definability of domain and codomain: since the maps are represented by elements in the fibers, we can just take images under $\pi$ of any of their representant.

We can, using this coding for morphisms of the groupoid, prove that the groupoip action is relatively $\emptyset$-definable. If $\sigma \in \Mor(\pi(a),\pi(b))$, we can pick any representant $(\oa, \sigma(\oa))$. Then $\sigma(a)$ is the unique tuple satisfying $\psi(\oa,\sigma(\oa),a,z)$. Since this does not depend on the representant we pick, we obtain that $\sigma(a) \in \dcl(\sigma,a)$ (and the formula witnessing it is uniform in $\sigma$ and $a$). This yields that the groupoid action is relatively $\emptyset$-definable.

To finish the proof, we need to construct the composition in an $\emptyset$-definable way. 

\begin{claim}
\label{compisdef}
The composition of $\Mor(\G)$ is definable.
\end{claim}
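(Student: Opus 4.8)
The plan is to use the coding of $\Mor(\G)$ by $X/E$ together with the formula $\psi$ of Claim~\ref{formula}, exploiting the fact that any morphism of $\G$ sends a realization of $r$ in its domain fiber to a realization of $r$ in its codomain fiber. Concretely, suppose $\sigma \in \Mor(\pi(a),\pi(b))$ is coded by $(\oa_1,\ob_1) \in X$ (so $\oa_1 \models r$, $\pi(\oa_1) = \pi(a)$, and $\sigma(\oa_1) = \ob_1$), and $\tau \in \Mor(\pi(b),\pi(c))$ is coded by $(\oa_2,\ob_2) \in X$. The composable pairs are exactly those with $\pi(\ob_1) = \pi(\oa_2)$, which is a relatively $\emptyset$-definable, $E$-invariant condition on representatives, since the domain and codomain maps have already been shown to be relatively $\emptyset$-definable on $X/E$.

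The key observation is that $\tau \circ \sigma$ is coded by $(\oa_1,\tau(\ob_1))$. Indeed, $\tau \circ \sigma$ is induced by an automorphism of $\MM$ fixing $\Pc$ pointwise whose restriction sends $\oa_1$ to $\tau(\sigma(\oa_1)) = \tau(\ob_1)$; hence $(\oa_1,\tau(\ob_1)) \in X$, and by the uniqueness in Claim~\ref{biject} this pair codes the unique morphism in $\Mor(\pi(\oa_1),\pi(\tau(\ob_1)))$ taking $\oa_1$ to $\tau(\ob_1)$, which is precisely $\tau \circ \sigma$. So it only remains to express $\tau(\ob_1)$ relatively definably from $(\oa_2,\ob_2)$ and $\ob_1$. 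Since $(\oa_2,\ob_2) \in X$ and $\pi(\ob_1) = \pi(\oa_2) = \dom(\tau)$, each coordinate of $\ob_1$ is a realization of $q$ with image $\pi(\oa_2)$, so Claim~\ref{formula} applies coordinatewise: writing $\ob_1 = (b^1,\dots,b^m)$ and $\od = (d^1,\dots,d^m)$, the tuple $\tau(\ob_1)$ is the unique $\od$ satisfying $\bigwedge_{j=1}^m \psi(\oa_2,\ob_2,b^j,d^j)$.

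Passing to $E$-classes, the composition map is therefore given by $([(\oa_1,\ob_1)]_E,[(\oa_2,\ob_2)]_E) \mapsto [(\oa_1,\od)]_E$, where $\od$ is the unique tuple with $\bigwedge_{j=1}^m \psi(\oa_2,\ob_2,b^j,d^j)$; this is a relatively $\emptyset$-definable relation on the (relatively $\emptyset$-definable) set of composable pairs. It is well-defined independently of the choices of representatives, because the output is a genuine representative of the groupoid-theoretic composite $\tau \circ \sigma$, and $E$-classes are in bijection with morphisms of $\G$. This gives the claimed definability of composition (and identities and inverses are handled similarly: $\id_{\pi(a)}$ is coded by $(\oa,\oa)$ and the inverse of $[(\oa,\ob)]_E$ by $[(\ob,\oa)]_E$), completing the proof of Theorem~\ref{bindoid}.

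I expect the only real subtlety to be bookkeeping: verifying that all hypotheses of Claim~\ref{formula} are met when applying $\psi$ coordinatewise (namely $\oa_2 \models r$, $\ob_2 = \tau(\oa_2)$, and the matching of the $\pi$-images), and confirming that $(\oa_1,\tau(\ob_1))$ really lies in $X$. There is no genuine obstacle: once morphisms are coded by base-point pairs, composition of morphisms amounts to ``chaining'' base points, and the formula $\psi$ was designed precisely so that this chaining is definable.
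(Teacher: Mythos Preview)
Your proof is correct and takes a genuinely different, more elementary route than the paper's. The paper characterizes $\mu = \tau \circ \sigma$ by the condition that $\mu(a) = \tau(\sigma(a))$ holds for $a$ realizing the nonforking extension of $q_{\pi(\oa)}$ over all the data; it then expresses this via $\psi$, invokes definability of the stationary type $q_{\pi(\oa)}$ to turn it into a formula over $\pi(\oa)$, and finally reruns the uniformity argument from Claim~\ref{eisdef} to eliminate the dependence on $\pi(\oa)$. Your approach bypasses all of this machinery: you directly manufacture a representative of $\tau\circ\sigma$ from representatives of $\sigma$ and $\tau$, namely $(\oa_1,\tau(\ob_1))$, computing $\tau(\ob_1)$ by applying $\psi$ coordinatewise to the entries of $\ob_1$; well-definedness on $E$-classes is then immediate because any such output codes the same morphism $\tau\circ\sigma$. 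What your route buys is that it avoids the defining-scheme and uniformity steps entirely (and is essentially the observation that composition is the groupoid action applied to a base point of the first morphism); what the paper's route buys is a uniform template, parallel to the proof that $E$ is definable, which makes the role of stationarity of $q_{\pi(\oa)}$ explicit at each stage.
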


\begin{proof}

Let $\sigma,\tau,\mu \in \Mor(\mathcal{G})$. Let $\oa,\ob,\oc \models r$, with $\pi(\oa) = \dom(\sigma), \pi(\ob) = \dom(\tau)$ and $\pi(\oc) = \dom(\sigma)$. We will show that the equality $\tau \circ \sigma = \mu$ holds if and only if $\dom(\sigma) = \dom(\mu), \cod(\tau) = \cod(\mu),\cod(\sigma) = \dom (\tau) $ and for any $a \models q_{\pi(\oa)}|_{\oa,\ob,\oc,\sigma,\tau,\mu}$, we have:

$\forall z \psi(\oc,\mu(\oc),a,z) \leftrightarrow \exists u (\psi(\oa,\sigma(\oa),a,u) \wedge (\psi(\ob,\tau(\ob),u,z))) $

The left to right direction is again immediate. For the right to left direction, we can proceed as in Claim \ref{eisdef}, and assume that the right-hand side holds. Pick $\oa_2 \models r_{\pi(\oa)}|_{\sigma,\tau,\mu,\oa,\ob,\oc}$, then, as was done in Claim \ref{eisdef}, we obtain $\mu(\oa_2) = \tau \circ \sigma(\oa_2)$. But any $a' \models q_{\pi(\oa)}$ is equal to $f(\oa_2,c)$ for some $c$ tuple of realizations of $\mathcal{P}$. So we get $\mu(a') = f(\mu(\oa_2),c) = f(\tau \circ \sigma(\oa_2),c) = \tau \circ \sigma (a')$. So $\mu = \tau \circ \sigma$.

Note that since the type $q_{\pi(\oa)}$ is stationary and definable, the right-hand side condition is equivalent to a formula over $\dom(\sigma) = \pi(\oa)$. Moreover, the truth of this formula does not depend on the representants of $\sigma,\tau$ and $\mu$ that we pick. Therefore it only depends on $\sigma,\tau$ and $\mu$.

Hence, if we fix $\pi(a)$, we obtain a formula $\theta_{\pi(a)}(x,y,z)$ over $\pi(a)$ such that for all $\sigma,\tau,\mu \in \Mor(\G)$ with $\dom(\sigma) = \dom(\mu) = \pi(a)$, we have $\theta_{\pi(a)}(\sigma,\tau,\mu)$ if and only if $\mu = \tau \circ \sigma$. Again, this formula is a defining scheme for $q_{\pi(a)}$.

We can apply the proof of Claim \ref{eisdef} to this situation, to get a formula $\theta$ over $\emptyset$ such that for all $\sigma,\tau,\mu \in \Mor(\G)$, $\mu = \tau \circ \sigma$ if and only if $\theta(\sigma,\tau,\mu)$. So the composition in $\G$ is relatively $\emptyset$-definable.

\end{proof}
This finishes the proof: we have obtained a type-definable groupoid, and we already saw that its natural action on $q(\MM)$ is relatively $\emptyset$-definable.
\end{proof}

We will denote this groupoid $\G(q,\pi/\Pc)$, or just $\G$ when it is clear what type and projection are considered.

Here is a first connection between the groupoid $\G(q, \pi / \Pc)$ and the type $q$. In section \ref{preliminaries}, we defined the connected component of a groupoid. An easy consequence of Fact \ref{tenzig} is that the connected components of $\G(q,\pi/\Pc)$ correspond to the orbits of $\pi(q)$ under $\Aut(\pi(q)/\Pc)$ (even if this group is not type-definable).

What if $q$ is $\Pc$-internal? Our theorem specializes in the following way: we can pick $e \in \dcl(\emptyset)$, and set $\pi(a) = e$ for all $a \models q$. We obtain a groupoid with only one object, that is, a group, which is just the type-definable binding group of $q$ over $\Pc$. 

In the internal case, the type-definable group $\Aut(q/\Pc)$ can be shown (see \cite{pillay1996geometric}) to be definably isomorphic to a type-definable group in $\Pc^{eq}$, possibly using some extra parameters. In particular, the group $\Aut(q/\Pc)$ is internal to $\Pc$. One would hope that in our context, the groupoid $\G(q,\pi /\Pc)$ is also $\Pc$-internal. This result, proved with the help of Omar L\'{e}on S\'{a}nchez, shows that it is unfortunately not the case:

\begin{prop}
If $\G$ is internal to $\Pc$ and connected, then $q$ is internal to $\Pc$.
\end{prop}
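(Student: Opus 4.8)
The plan is to show directly that $q$ satisfies the characterization of $\Pc$-internality recalled at the start of Section~\ref{preliminaries}, by transporting a fundamental system of solutions along a morphism of $\G$. First I would fix once and for all a realization $a_0 \models q$ together with a tuple $\oa_0 \models r$ with $\pi(\oa_0) = \pi(a_0)$, where $r$ and the data $f, \Psi_1, \dots, \Psi_n$ are those of Claim~\ref{unif}. Since $\G$ is internal to $\Pc$, the $\emptyset$-type-definable set $\Mor(\G)$ is $\Pc$-internal, so there is a small parameter set $B_1$ with $\Mor(\G) \subseteq \dcl(\Pc, B_1)$. I claim that $B := B_1 \cup \oa_0$ witnesses $\Pc$-internality of $q$.

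The core step: let $b \models q$ be independent from $B$ over $\emptyset$. As $\G$ is connected and its objects are exactly the realizations of $\pi(q)$, there is a morphism $\sigma \in \Mor(\pi(a_0), \pi(b))$. Because $\sigma$ is induced by an automorphism of $\MM$ fixing $\Pc$ pointwise and sending $\pi(a_0)$ to $\pi(b)$, the tuple $\sigma(\oa_0)$ again realizes $r$ and satisfies $\pi(\sigma(\oa_0)) = \pi(b)$. By the definability of the groupoid action established in the proof of Theorem~\ref{bindoid} (namely $\sigma(a) \in \dcl(\sigma, a)$ for $a \models q$ with $\dom(\sigma) = \pi(a)$, applied componentwise), we get $\sigma(\oa_0) \in \dcl(\sigma, \oa_0) \subseteq \dcl(\Pc, B)$, using $\sigma \in \dcl(\Pc, B_1)$. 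Finally, applying Claim~\ref{unif} to the fiber over $\pi(b)$ with the fundamental system $\sigma(\oa_0)$ gives realizations $c_i \models \Psi_i$ with $b = f(\sigma(\oa_0), c_1, \dots, c_n)$, hence $b \in \dcl(\sigma(\oa_0), \Pc) \subseteq \dcl(\Pc, B)$. This is precisely $\Pc$-internality of $q$.

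The main point to get right — the only thing that is not bookkeeping — is the uniformity furnished by Claim~\ref{unif}: the function $f$ and the partial types $\Psi_i$ do not depend on the fiber, so any realization of $r$ lying above $\pi(b)$ serves as a fundamental system for $q_{\pi(b)}$, and connectedness of $\G$ is exactly what produces such a realization, namely $\sigma(\oa_0)$, inside $\dcl(\Pc, B)$. The remaining verifications — that $\sigma(\oa_0) \models r$ and $\pi(\sigma(\oa_0)) = \pi(b)$, that $\sigma(\oa_0) \in \dcl(\sigma, \oa_0)$, and that morphisms of a $\Pc$-internal groupoid lie in $\dcl(\Pc, B_1)$ — follow immediately from Fact~\ref{tenzig}, the construction of $\G$, and the definition of internality. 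Note that internality of $\Ob(\G)$ is not needed on its own, since $\Ob(\G)$ is the image of $\Mor(\G)$ under the $\emptyset$-definable domain map.
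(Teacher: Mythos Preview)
Your proposal is correct and follows essentially the same route as the paper's proof: fix a fundamental system $\oa_0$ in one fiber, use connectedness to transport it via some $\sigma \in \Mor(\G)$ to the fiber over an arbitrary $b$, and then use $\sigma \in \dcl(\Pc, B_1)$ together with definability of the action and Claim~\ref{unif} to conclude $b \in \dcl(\Pc, B_1, \oa_0)$. Your write-up is in fact a bit more careful than the paper's (you make the parameter set $B = B_1 \cup \oa_0$ explicit and invoke the definability of the action); the independence assumption on $b$ is harmless but unnecessary, since the argument shows $b \in \dcl(\Pc, B)$ for \emph{every} $b \models q$, which is the equivalent form of internality stated at the start of Section~\ref{preliminaries}.
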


\begin{proof}
By internality assumption, there is a set of parameters $B$ such that $\Mor(\G) \subset \dcl(\Pc,B)$. 

Let $a$ and $b$ be any realizations of $q$, and let $\oa$ be a fundamental system of solutions for $\tp(a/\pi(a))$. Since $\G$ is connected, there is $\sigma \in \Mor(\pi(a),\pi(b))$. Moreover, the tuple $\ob = \sigma(\oa)$ is a fundamental system of solutions for $\tp(b/\pi(b))$. Therefore, there is $\od \in \Pc$ such that $b = f(\ob,\od) = f(\sigma(\oa),\od)$. But $\sigma \in \dcl(\Pc,B)$, therefore $b \in \dcl(\Pc,B,\oa)$.
\end{proof}

Note that this result is also true if one replace connected by boundedly many connected components. Some connectedness assumption is required to make this proof work. 

In some cases, the groupoid associated to a non-internal type might actually be internal as well. Indeed, it is proven in \cite{haykazyan2017functoriality}, that the groupoids associated to certain relatively internal definable sets are internal to the base set. The context in which these objects are studied in this paper is slightly different from ours, and it would be interesting to see which results can transfer, in one way or the other.

\begin{rmk}
\label{recfun}

If we assume that $\pi(q)$ is $\Pc$-internal, Theorem \ref{bindgrp} yields the type-definable binding group $\Aut(\pi(q)/\Pc)$. If we moreover assume that $\pi(q)$ is fundamental (see Definition \ref{funsol}), we obtain a definable functor $\Pi: \G(q,\pi/\Pc) \rightarrow \Aut(\pi(q)/\Pc)$. Indeed, we can send the morphism represented by $(\oa,\ob)$ to the one represented by $(\pi(\oa), \pi(\ob))$. We will see in the fifth section that this generalizes to the case of $\pi(q)$ not fundamental, after introducing Delta groupoids.

\end{rmk}

\bigskip
\section{Retractability}
\label{retract}
\noindent

In this section, we consider retractability, which was introduced in \cite{goodrick2010groupoids}. There, it was used to study groupoids arising from internality, and was linked to 3-amalgamation in stable theories. Interestingly, it has some meaningful content in the context of our paper as well.

\begin{definition}

An $\emptyset$-type-definable groupoid $\mathcal{G}$ is retractable if it is connected and there exist an $\emptyset$-definable partial function $g(x,y)=g_{x,y}$ such that for all $a,b$ objects of $\G$, we have $g_{a,b} \in \Mor(a,b)$. Moreover, we require the compatibility condition that $g(b,c) \circ g(a,b) = g(a,c)$ for all objects $a,b,c$ (note that this implies $g_{a,a} = \id _a$ and $g_{a,b}^{-1} = g_{b,a}$ for all $a,b$). 
\end{definition}

The following was proved in \cite{goodrick2010groupoids}, but we include their proof here for completeness: 

\begin{rmk}
An equivalent definition of retractability is given by: there exist an $\emptyset$-type-definable group $G$, and a full, faithfull $\emptyset$-definable functor $F: \G \rightarrow G$.
\end{rmk}

\begin{proof}
If we have such a functor $F: \G \rightarrow G$, we can take $g_{a,b} = F^{-1}(\lbrace \id_{G} \rbrace) \cap \Mor(a,b)$, which is a singleton because $F$ is full and faithfull. The compatiblity condition is easily checked, and this is definable uniformly in $(a,b)$.

If $\G$ is retractable, then we can construct a relation $E$ on $\Mor(\G)$ as follows: if $\sigma \in \Mor(a,b)$ and $\tau \in \Mor(c,d)$, then $\sigma E \tau$ if and only if $\tau = g_{b,d} \circ \sigma \circ g_{c,a}$. By the compatibility condition, this is an equivalence relation, and it is $\emptyset$-definable. Now consider $G = \Mor(\G) /E$, and $F: \G \rightarrow G$ the quotient map. The groupoid law of $\G$ goes down to a group law on $G$. Indeed, if we want to compose $\sigma \in \Mor(a,b)$ and $\tau \in \Mor(c,d)$ in $G$, notice that $\tau E g_{d,a} \circ \tau$, so we can define $F(\sigma) \circ F(\tau) = F(\sigma \circ g_{d,a} \circ \tau)$. Again by the compatibility condition, this is well defined. Finally, it is easy to derive the group axioms from the groupoid axioms of $\G$.
\end{proof}

We are still working with a family of partial types $\Pc$ over the empty set, a type $q$, and an $\emptyset$-definable function $\pi$ such that $q$ is relatively $\Pc$-internal via $\pi$. Let $\G = \G(q,\pi/\Pc)$. Recall that we denote, for $a \models q$, the type $\tp(a/\pi(a))$ by $q_{\pi(a)}$.

In a stable theory, if $p$ and $q$ are stationary types over some fixed set of parameters $A$, the type of $(a,b)$, with $a \models p$ and $b \models q$ independent over $A$, is unique. We denote this type $p \otimes q$.

\begin{prop}
\label{retprod}
If $\G$ is retractable, then there is a complete type $p \in S(\emptyset)$, internal to $\Pc$, weakly orthogonal to $\pi(q)$, and an $\emptyset$-definable bijection between $q(\mathbb{M})$ and $p \otimes \pi(q) (\mathbb{M})$.
\end{prop}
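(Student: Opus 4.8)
The plan is to use the retraction to \emph{trivialize} the fibered action of $\G$ on $q(\MM)$: the definable family of morphisms $g_{x,y}$ will identify all the fibers $q_{\pi(a)}(\MM)$ with a single $\emptyset$-type-definable set, whose type over $\emptyset$ will be the sought-after $p$, and weak orthogonality of $p$ with $\pi(q)$ will then be forced by the homogeneity of $q$ over $\emptyset$. Let $g_{x,y}$ be as in the definition of retractability, so $g_{x,y}\in\Mor(x,y)$ for all objects $x,y$ of $\G$ (that is, all $x,y\models\pi(q)$) and $g_{y,z}\circ g_{x,y}=g_{x,z}$, which forces $g_{x,x}=\id_x$ and $g_{x,y}\inv=g_{y,x}$. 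Define $a\sim b$ iff $g_{\pi(a),\pi(b)}(a)=b$; using the compatibility condition and the $\emptyset$-definability of the groupoid action from Theorem~\ref{bindoid}, $\sim$ is an $\emptyset$-(type-)definable equivalence relation on $q(\MM)$. Write $[a]$ for its class, an imaginary in $\dcl(a)$, and set $p:=\tp([a]/\emptyset)$; this is well defined (all realizations of $q$ are $\emptyset$-conjugate and $a\mapsto[a]$ is $\emptyset$-definable) and complete, hence stationary since $\acl(\emptyset)=\emptyset$. A short automorphism argument shows $\{[a]:a\models q\}=p(\MM)$.

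For $c\models\pi(q)$ and $e\models p$, put $\beta_c(e):=g_{\pi(a_0),c}(a_0)$, where $a_0$ is any realization of $q$ with $[a_0]=e$; the compatibility condition shows the value does not depend on the choice of $a_0$, so $\beta\colon p(\MM)\times\pi(q)(\MM)\to q(\MM)$ is totally defined and relatively $\emptyset$-definable. Since each $g_{\pi(a_0),c}$ is a bijection $q_{\pi(a_0)}(\MM)\to q_c(\MM)\subseteq q(\MM)$ (writing $q_c$ for $q(x)\cup\{\pi(x)=c\}$), we have $\beta_c(e)\models q$ with $\pi(\beta_c(e))=c$ and $[\beta_c(e)]=e$. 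Hence $h\colon a\mapsto([a],\pi(a))$ and $\beta$ are mutually inverse bijections between $q(\MM)$ and $p(\MM)\times\pi(q)(\MM)$: indeed $h$ is injective (if $[a]=[b]$ and $\pi(a)=\pi(b)=c$ then $a=g_{c,c}(a)=b$), $h\circ\beta=\id$ by the previous sentence, and $\beta\circ h=\id$ since $\beta_{\pi(a)}([a])=g_{\pi(a),\pi(a)}(a)=a$. So $h$ is an $\emptyset$-definable bijection of $q(\MM)$ onto $p(\MM)\times\pi(q)(\MM)$.

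Now I upgrade this to weak orthogonality. Suppose for contradiction that some $e_0\models p$ and $c_0\models\pi(q)$ are dependent; then $a_0:=\beta_{c_0}(e_0)\models q$ satisfies $[a_0]\nind\pi(a_0)$. On the other hand, taking $e_1\models p$ and $c_1\models\pi(q)$ with $c_1\ind e_1$ (e.g.\ $c_1$ realizing the nonforking extension of $\pi(q)$ over $e_1$), the element $a_1:=\beta_{c_1}(e_1)\models q$ satisfies $[a_1]\ind\pi(a_1)$. But $a_0\equiv_\emptyset a_1$ and $[-],\pi$ are $\emptyset$-definable, so $([a_0],\pi(a_0))\equiv_\emptyset([a_1],\pi(a_1))$; as $\emptyset$ is algebraically closed, independence over $\emptyset$ is determined by the type over $\emptyset$, a contradiction. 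Hence $p$ is weakly orthogonal to $\pi(q)$, so $p(\MM)\times\pi(q)(\MM)=(p\otimes\pi(q))(\MM)$ and $h$ is the desired $\emptyset$-definable bijection; in particular $[a]\ind\pi(a)$ for all $a\models q$.

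Finally, $p$ is $\Pc$-internal: fix $a\models q$. Since $q_{\pi(a)}=\tp(a/\pi(a))$ is $\Pc$-internal there are $B\supseteq\{\pi(a)\}$, a realization $a''$ of $q_{\pi(a)}$ with $a''\ind_{\pi(a)}B$, and a tuple $\oc$ of realizations of $\Pc$ with $a''\in\dcl(\oc,B)$. Then $[a'']\in\dcl(a'')\subseteq\dcl(\oc,B)$ and $[a'']\ind_{\pi(a)}B$; combined with $[a'']\ind\pi(a)$ (from the previous paragraph, since $a''\models q$) and transitivity of forking, $[a'']\ind_\emptyset B$, so this configuration witnesses $\Pc$-internality of $p$ as $[a'']\models p$. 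The steps I expect to carry the real weight are the well-definedness of $\beta$ --- this is exactly where the compatibility condition of retractability, rather than mere connectedness of $\G$, is needed --- and the passage from the definable bijection $q(\MM)\cong p(\MM)\times\pi(q)(\MM)$ to weak orthogonality; the remaining verifications (that $\sim$ is an $\emptyset$-definable equivalence relation, that $h$ and $\beta$ are inverse, and the internality computation) are routine forking calculus and bookkeeping with (type-)definability in a stable theory.
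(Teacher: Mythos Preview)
Your proof is correct and follows essentially the same approach as the paper's: the equivalence relation $\sim$ is exactly the paper's $E$, the map $h$ is the paper's $s$, and your weak orthogonality argument is an explicit unpacking of the paper's observation that the bijective $\emptyset$-definable image of a complete type is a complete type. The only minor difference is in the internality step: the paper observes that every $\sim$-class meets each $\pi$-fiber exactly once, so $p(\MM)\subseteq\dcl(q_{\pi(a)}(\MM))\subseteq\dcl(\oa,\Pc)$ for a fundamental system $\oa$, whereas you verify the one-realization definition directly using the independence $[a'']\ind\pi(a)$ already established; both are valid and neither is materially shorter.
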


\begin{proof}
We consider the $\emptyset$-definable relation $x E y \Leftrightarrow g_{\pi(x),\pi(y)}(x) = y$. The compatibility condition of retractability implies that this is an equivalence relation. Let $\rho$ be the quotient map, then $\rho(q)$ is a complete type over the empty set, and it will be the type $p$ of the proposition. 

There is an $\emptyset$-definable function $s: q(\mathbb{M}) \rightarrow \rho(q)(\mathbb{M}) \times \pi(q)(\mathbb{M})$ sending $x$ to $(\rho(x),\pi(x))$. Since $q$ is a complete type, $s(q(\MM))$ is the set of realizations of a complete type, denoted $s(q)$. But the function $s$ is bijective. Indeed, notice that each $E$-class has exactly one element in each fiber of $\pi$: each class has at least one element in a given fiber because $\G$ is connected, and no more than one because $g_{\pi(a), \pi(a)} = \id _a$. Therefore we can send $(\rho(a),\pi(b))$ to the unique element both in the $\pi(b)$ fiber and in the $E$-class of $a$, to obtain an inverse of $s$. So $\rho(q)(\mathbb{M}) \times \pi(q)(\mathbb{M})$ is in $\emptyset$-definable bijection with a complete type, hence is itself a complete type over the empty set. In particular $\pi(q)$ and $\rho(q)$ are weakly orthogonal, so $\rho(q)(\mathbb{M}) \times \pi(q)(\mathbb{M}) = \rho(q) \otimes \pi(q) (\mathbb{M})$. We denote $p = \rho(q)$.

We now just need to prove that $p$ is $\Pc$-internal. Each $E$-class has a unique representant in each $\pi$-fiber. Therefore, fixing $a \models q$, we have $p(\MM) \subset \dcl(q_{\pi(a)}(\MM))$. But by internality of the fibers, we get $q_{\pi(a)}(\MM) \subset \dcl(\oa, \Pc)$, for some tuple $\oa$. This yields $p(\MM) \subset \dcl(\oa,\Pc)$.

\end{proof}

\begin{cor}
\label{retint}

If $\G$ is retractable and $\pi(q)$ is $\Pc$-internal, then $q$ is $\Pc$-internal.

\end{cor}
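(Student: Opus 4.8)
The plan is to obtain Corollary~\ref{retint} as an essentially immediate consequence of Proposition~\ref{retprod}, which already does all the real work. That proposition extracts from retractability of $\G$ a complete type $p \in S(\emptyset)$ which is $\Pc$-internal, together with an $\emptyset$-definable bijection $h$ between $q(\MM)$ and $(p \otimes \pi(q))(\MM)$. Once we additionally assume that $\pi(q)$ is $\Pc$-internal, the statement reduces to two standard facts about internality in stable theories: that a finite tensor product of $\Pc$-internal stationary types is again $\Pc$-internal, and that $\Pc$-internality is preserved under $\emptyset$-definable bijections between sets of realizations of complete types.

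First I would recall (or quickly reprove) the product fact. Given independent realizations $a \models p$ and $c \models \pi(q)$, pick parameter sets $B_1, B_2$ witnessing $\Pc$-internality of $p$ and of $\pi(q)$, so that $a \ind_\emptyset B_1$, $c \ind_\emptyset B_2$, and $a \in \dcl(\od_1, B_1)$, $c \in \dcl(\od_2, B_2)$ for tuples $\od_1, \od_2$ of realizations of $\Pc$. Using the extension axiom one may move $B_1 B_2$ so that it becomes independent from $ac$ over $\emptyset$ while retaining these properties (here one uses that being a realization of $\Pc$ is an $\emptyset$-invariant condition, so automorphisms fixing $a$ send $\od_i$ to another tuple of realizations of $\Pc$). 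This yields $(a,c) \ind_\emptyset B_1 B_2$ and $(a,c) \in \dcl(\od_1 \od_2, B_1 B_2)$, which is exactly $\Pc$-internality of $p \otimes \pi(q)$; alternatively this can simply be cited from \cite{pillay1996geometric}. Since $p$ is $\Pc$-internal by Proposition~\ref{retprod} and $\pi(q)$ is $\Pc$-internal by hypothesis, we conclude that $p \otimes \pi(q)$ is $\Pc$-internal.

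Finally I would transport internality back along $h$: if $(a,c) \models p \otimes \pi(q)$ is independent from some set $B$ over $\emptyset$ with $(a,c) \in \dcl(\Pc, B)$, then $h^{-1}(a,c) \models q$ satisfies $h^{-1}(a,c) \in \dcl(a,c) \subseteq \dcl(\Pc, B)$ and is still independent from $B$ over $\emptyset$, being interdefinable with $(a,c)$ over $\emptyset$; hence $q$ is $\Pc$-internal. I do not expect any genuine obstacle here, since everything substantial is already contained in Proposition~\ref{retprod}. The only points requiring a little care are checking that $q$, $\pi(q)$ and $p$ are stationary — which holds because they are complete types over $\emptyset$ and $\acl(\emptyset) = \emptyset$, so that it makes sense to speak of their $\Pc$-internality — and keeping the independence bookkeeping in the product step straight.
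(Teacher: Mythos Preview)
Your proposal is correct and takes exactly the approach the paper intends: the corollary is stated without proof immediately after Proposition~\ref{retprod}, and your argument spells out precisely the two standard facts (closure of $\Pc$-internality under $\otimes$ and under $\emptyset$-definable bijections) needed to deduce it. If anything, you are more explicit than the paper, which treats the corollary as self-evident.
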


Retractability yields a functor $F: \G \rightarrow G$, but one could ask if it has any consequence on the group $\Aut(q/\Pc)$. As it turns out, it does: 

\begin{prop}\label{retmorph}

If $\G$ is retractable, there is a morphism $R :  \Aut(q/\Pc) \rightarrow G$, which is surjective.

\end{prop}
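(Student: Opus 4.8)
The plan is to build $R$ fibrewise. Given $\sigma \in \Aut(q/\Pc)$, pick an automorphism $\tilde\sigma$ of $\MM$ fixing $\Pc$ pointwise with $\tilde\sigma|_{q(\MM)} = \sigma$. Since $\pi$ is $\emptyset$-definable, for each $a \models q$ the permutation $\sigma$ carries $q_{\pi(a)}(\MM)$ onto $q_{\pi(\sigma(a))}(\MM)$, so $\sigma_a := \sigma|_{q_{\pi(a)}(\MM)}$ is a morphism in $\Mor(\pi(a),\pi(\sigma(a)))$ of $\G$. Composing with the quotient functor $F : \G \to G$ coming from retractability, I would set $R(\sigma) := F(\sigma_a)$; the first task is to see this does not depend on the chosen $a \models q$.

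For independence, fix $a,b \models q$ and I claim $\sigma_a \, E \, \sigma_b$. The key point is that the retraction $g$ is $\emptyset$-definable, hence invariant under $\tilde\sigma$: for all objects $x,y$ we have $\tilde\sigma(g_{x,y}) = g_{\sigma x,\sigma y}$ (here I also use that $\tilde\sigma$ on $\Ob(\G) = \pi(q)(\MM)$ is determined by $\sigma$). Because the groupoid action on $q(\MM)$ is $\emptyset$-definable, the imaginary $\tilde\sigma(g_{x,y})$ is exactly the conjugate bijection $\sigma \circ g_{x,y} \circ \sigma^{-1}$, so $\sigma \circ g_{x,y} \circ \sigma^{-1} = g_{\sigma x,\sigma y}$. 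Applying this with $x = \pi(b)$, $y = \pi(a)$ gives $\sigma \circ g_{\pi(b),\pi(a)} = g_{\pi\sigma(b),\pi\sigma(a)} \circ \sigma$; composing on the left with $g_{\pi\sigma(a),\pi\sigma(b)}$ and invoking the compatibility condition $g_{\pi\sigma(a),\pi\sigma(b)} \circ g_{\pi\sigma(b),\pi\sigma(a)} = \id$ yields $g_{\pi\sigma(a),\pi\sigma(b)} \circ \sigma_a \circ g_{\pi(b),\pi(a)} = \sigma_b$, which is precisely the defining relation $\sigma_a \, E \, \sigma_b$. Hence $R$ is well-defined. It is a homomorphism by functoriality of $F$: for $\sigma,\sigma' \in \Aut(q/\Pc)$ and $a \models q$ one has $(\sigma\sigma')_a = \sigma_{\sigma'(a)} \circ \sigma'_a$ as a composition in $\G$, so $R(\sigma\sigma') = F(\sigma_{\sigma'(a)}) \cdot F(\sigma'_a) = R(\sigma) R(\sigma')$, using well-definedness at the realization $\sigma'(a) \models q$.

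For surjectivity, fix $a_0 \models q$ and put $o = \pi(a_0)$. Since $\G$ is connected, $F$ restricts to a group isomorphism $\Mor(o,o) \to G$: every class in $G$ has a representative in $\Mor(o,o)$ obtained by conjugating with the maps $g_{\cdot,o}$ and $g_{o,\cdot}$, and $F$ is injective on $\Mor(o,o)$ because $g_{o,o} = \id_o$. But $\Mor(o,o)$ is the binding group $\Aut(q_{\pi(a_0)}/\Pc)$, so by Fact~\ref{tenzig} every $\mu \in \Mor(o,o)$ is the restriction to $q_{\pi(a_0)}(\MM)$ of some automorphism of $\MM$ fixing $\Pc$ pointwise and fixing $\pi(a_0)$. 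For the corresponding $\sigma \in \Aut(q/\Pc)$ we then have $\pi(\sigma(a_0)) = \pi(a_0) = o$ and $\sigma_{a_0} = \mu$, whence $R(\sigma) = F(\sigma_{a_0}) = F(\mu)$. As $F$ maps $\Mor(o,o)$ onto $G$, $R$ is onto.

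I expect the independence step to be the main obstacle to get exactly right: the fibrewise restrictions $\sigma_a$ live a priori in different hom-sets $\Mor(\pi(a),\pi\sigma(a))$, and it is only after passing to the quotient group $G$ that they agree. Making this precise is exactly where one must combine the $\emptyset$-definability (equivalently, full automorphism-invariance) of the retraction $g$ with the explicit description of $E$ in terms of $g$; everything else is bookkeeping on top of the functoriality of $F$ and the identification of $G$ with an isotropy group granted by connectedness.
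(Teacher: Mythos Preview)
Your proof is correct and follows essentially the same approach as the paper's: define $R(\sigma)=F(\sigma|_{\pi(a)})$, use the $\emptyset$-definability of $g$ to get $\sigma\circ g_{x,y}\circ\sigma^{-1}=g_{\sigma x,\sigma y}$ and hence well-definedness, then appeal to functoriality of $F$ for the homomorphism property and to extending isotropy elements to global automorphisms for surjectivity. Your write-up is in places slightly more explicit (e.g.\ spelling out that $F$ restricts to an isomorphism $\Mor(o,o)\to G$), but the argument is the same.
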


\begin{proof}

We use the functor $F: \G \rightarrow G$. For $\sigma \in \Aut(q/\Pc)$, note that the restriction of $\sigma$ to $q_{\pi(a)}(\MM)$ is an element of $\Mor(\pi(a), \sigma(\pi(a)))$. We denote it by $\sigma \vert _{\pi(a)}$. We can then set $R(\sigma) = F(\sigma \vert _{\pi(a)})$. Let us show that $R$ is a surjective morphism $R : \Aut(q/\Pc) \rightarrow G$.

First, we need to prove that $R$ is well defined. To do so, we need to show that for any $b$, we have
$\sigma \vert _{\pi(b)} = g_{\sigma(\pi(a)), \sigma(\pi(b))} \circ \sigma \vert _{\pi(a)} \circ g_{\pi(b), \pi(a)}$, by definition of $F$.

Pick any $x$ with $\pi(x) = \sigma(\pi(a))$. Since $g_{\_,\_}$ is an uniformly $\emptyset$-definable family of partial functions, we have $g_{\sigma(\pi(a)), \sigma(\pi(b))}(x) = y$ if and only if $g_{\pi(a),\pi(b)} (\sigma ^{-1}(x)) = \sigma ^{-1}(y)$, for any $y$. Applying $\sigma$ to the second equality, we get, for all $y$, that $g_{\sigma(\pi(a)), \sigma(\pi(b))}(x) = y$ if and only if $\sigma(g_{\pi(a), \pi(b)}(\sigma ^{-1}(x))) = y$, which yields that $\sigma \vert _{\pi(b)} \circ g_{\pi(a), \pi(b)} \circ \sigma \vert _{\pi(a)}^{-1} = g_{\sigma(\pi(a)), \sigma(\pi(b))}$, what we wanted.

Therefore we have a well defined map $R: \Aut(q/\Pc) \rightarrow G$. It is a morphism because: 

\begin{align*}
    R(\sigma \circ \tau) & = F((\sigma \circ \tau) \vert _{\pi(a)}) \\
    & = F(\sigma \vert _{\tau(\pi(a))} \circ \tau \vert _{\pi(a)}) \\
    & = F(\sigma \vert _{\tau(\pi(a))}) \circ F(\tau \vert _{\pi(a)}))\\
    & = R(\sigma) \circ M(\tau)
\end{align*}

For surjectivity, by fullness of $F$, it is enough to prove that for $\sigma \in \Mor(\pi(a), \pi(a))$, there is $\tau \in \Aut(q/\Pc)$ restricting to $\sigma$. This is true by definition of $\Mor(\pi(a), \pi(a))$.

\end{proof}

\begin{prop}\label{retiso}

The group $G$ witnessing retractability is relatively $\emptyset$-definably isomorphic to $\Aut(p/\Pc)$, the binding group of $p$ over $\Pc$ (where $p$ is the type of Proposition \ref{retprod}).

\end{prop}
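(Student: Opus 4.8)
The plan is to construct, by hand, a relatively $\emptyset$-definable group isomorphism $\bar\Phi\colon G\to\Aut(p/\Pc)$, where $p=\rho(q)$ and $\rho$ is the quotient map by the relation $x\mathrel{E}y\iff g_{\pi(x),\pi(y)}(x)=y$ from the proof of Proposition~\ref{retprod}. First I would record the key feature of the $\rho$-fibres: for each object $\pi(a)$ the restriction $\rho_a:=\rho|_{q_{\pi(a)}(\MM)}$ is a bijection onto $p(\MM)$ --- surjective because $\G$ is connected, injective because two distinct $E$-equivalent realizations of $q$ cannot lie in the same $\pi$-fibre (as $g_{\pi(a),\pi(a)}=\id_a$) --- and this bijection is relatively $\emptyset$-definable uniformly in $a$, with $\rho_a^{-1}$ sending $x'\in p(\MM)$ to the unique $y\models q_{\pi(a)}$ satisfying $\rho(y)=x'$. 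The defining identity of $E$ also yields $\rho_b\circ g_{\pi(a),\pi(b)}=\rho_a$ for all objects $\pi(a),\pi(b)$.

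Next I would define $\Phi\colon\Mor(\G)\to\mathrm{Sym}(p(\MM))$ by $\Phi(\sigma)=\rho_{\cod(\sigma)}\circ\sigma\circ\rho_{\dom(\sigma)}^{-1}$, and run three short computations. (i) If $\hat\sigma\in\Aut(\MM/\Pc)$ induces $\sigma$ on $q_{\pi(a)}(\MM)$, then since $\rho$ is $\emptyset$-definable it commutes with $\hat\sigma$, and a direct check gives $\Phi(\sigma)=\hat\sigma|_{p(\MM)}$, so in particular $\Phi(\sigma)\in\Aut(p/\Pc)$. (ii) $\Phi(\tau\circ\sigma)=\Phi(\tau)\circ\Phi(\sigma)$. (iii) $\Phi(g_{\pi(a),\pi(b)})=\id_{p(\MM)}$, using $\rho_b\circ g_{\pi(a),\pi(b)}=\rho_a$. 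From (ii) and (iii), $\Phi$ is constant on $E$-classes of $\Mor(\G)$ --- if $\tau=g_{\pi(b),\pi(d)}\circ\sigma\circ g_{\pi(c),\pi(a)}$ then $\Phi(\tau)=\Phi(\sigma)$ --- so it descends to $\bar\Phi\colon G=\Mor(\G)/E\to\Aut(p/\Pc)$; and combining (ii) with the formula $F(\sigma)\cdot F(\tau)=F(\sigma\circ g_{\cod(\tau),\dom(\sigma)}\circ\tau)$ for the group law on $G$ shows $\bar\Phi$ is a homomorphism.

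I would then check bijectivity. Surjectivity is immediate: any $f\in\Aut(p/\Pc)$ is induced by some $\hat\sigma\in\Aut(\MM/\Pc)$, and $\hat\sigma|_{q_{\pi(a)}(\MM)}\in\Mor(\pi(a),\pi(\hat\sigma(a)))$ maps to $f$ under $\Phi$ by (i). For injectivity, if $\Phi(\sigma)=\Phi(\tau)$ with $\sigma\in\Mor(\pi(a),\pi(b))$, $\tau\in\Mor(\pi(c),\pi(d))$, replace $\sigma$ by the $E$-equivalent $\sigma':=g_{\pi(b),\pi(d)}\circ\sigma\circ g_{\pi(c),\pi(a)}\in\Mor(\pi(c),\pi(d))$; now $\sigma'$ and $\tau$ share domain and codomain, and $\Phi$ restricted to $\Mor(\pi(c),\pi(d))$ is injective (it is composition with the bijections $\rho_c^{-1}$ and $\rho_d$), so $\sigma'=\tau$, i.e. $F(\sigma)=F(\tau)$. (Alternatively, the abstract isomorphism can be read off from Proposition~\ref{retmorph}, by checking that $R$ and the natural surjection $\Aut(q/\Pc)\to\Aut(p/\Pc)$ induced by the $\emptyset$-definable map $\rho$ have the same kernel.)

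The step I expect to be the real obstacle is relative $\emptyset$-definability of $\bar\Phi$. Transporting the $\emptyset$-definable groupoid action of Theorem~\ref{bindoid} through the $\rho_a$'s gives a relatively $\emptyset$-definable faithful action $\alpha$ of $G$ on $p(\MM)$, namely $(\bar\sigma,x')\mapsto\rho_{\cod(\sigma)}(\sigma(\rho_{\dom(\sigma)}^{-1}(x')))$, and by (i) this is precisely the canonical action of $\Aut(p/\Pc)$ on $p(\MM)$ precomposed with $\bar\Phi$. Since $p$ is $\Pc$-internal and stationary over $\emptyset$, Lemma~\ref{funsys} gives it a finite fundamental system of solutions $\oee$, and every element of $\Aut(p/\Pc)$ is determined by its effect on $\oee$; hence the graph of $\bar\Phi$ equals $\{(\bar\sigma,h)\in G\times\Aut(p/\Pc): h\cdot\oee=\alpha(\bar\sigma,\oee)\}$, which is relatively $\oee$-definable because both the $\Aut(p/\Pc)$-action on $p(\MM)$ and $\alpha$ are definable. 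Finally, since the construction of $\bar\Phi$ is canonical and uses only the $\emptyset$-(type-)definable data $\G,\pi,\Pc,g$, its graph is $\Aut(\MM)$-invariant, so the invariance argument used in Claim~\ref{eisdef} to pass from $\theta_{\pi(a)}$ to a formula over $\emptyset$ upgrades this to relative $\emptyset$-definability. Along the way one must also verify that $\rho_a^{-1}$ and $\alpha$ are genuinely relatively $\emptyset$-definable (rather than merely $\bigvee$-definable), which is routine given that $q$, $\pi(q)$ and $p$ are complete types but is the kind of bookkeeping where the write-up can get longer than anticipated.
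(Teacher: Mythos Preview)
Your proposal is correct and follows essentially the same approach as the paper: both construct the isomorphism by conjugating morphisms of $\G$ through the bijections $\rho|_{q_{\pi(a)}(\MM)}$ (the paper phrases this at the level of representatives as $(\oa,\sigma(\oa))/E \mapsto (\rho(\oa),\rho(\sigma(\oa)))/E'$, which is exactly your $\Phi$), verify that this is constant on $E$-classes and hence descends to $G$, and then upgrade parameter-definability to relative $\emptyset$-definability via $\Aut(\MM)$-invariance of the graph. Your packaging is a bit more functorial, and you route the definability step through the induced action on $p(\MM)$ together with a fundamental system $\oee$ rather than through the paper's explicit computations with representatives (its Claims on $P_{\pi(\oa)}$ and $F_{\pi(\oa)}$), but the substance is the same.
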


\begin{proof}

Recall that $\Mor(\G)$ is given by $X/E$, where $X$ is an $\emptyset$-type-definable set, and $E$ is an $\emptyset$-definable equivalence relation. Moreover, the type-definable set $X$ is composed of pairs of realizations of $r$, the type introduced in the proof of Theorem \ref{bindoid}. In the proof Proposition \ref{retprod}, we constructed an $\emptyset$-definable quotient map $\rho: q(\MM) \rightarrow p(\MM)$. The type $p = \rho(q)$ is $\Pc$-internal, hence its binding group $\Aut(p/\Pc)$ is similarly given by the type $r'$ of a fundamental system of solutions, an $\emptyset$-type-definable set $X'$ and an $\emptyset$-definable equivalence relation $E'$. We can assume that $r' = \rho(r)$.

For any $\oa \models r$, this allows us to define a group morphism:

\begin{align*}
P_{\pi(\oa)} : \Aut(\tp(\oa/\pi(\oa))/\Pc) & \rightarrow \Aut(p/\Pc) \\
\sigma = (\oa, \sigma(\oa))/E & \rightarrow (\rho (\oa), \rho(\sigma (\oa)))/E'
\end{align*}

\noindent and by construction of $\rho$, this is an isomorphism. It is relatively $\oa$-definable.

We are also given, by the retractability assumption, a relatively $\emptyset$-definable full and faithfull functor $F : \Mor(\G) \rightarrow G$. By restriction this yields, for any $\oa \models r$, a relatively $\pi(\oa)$-definable group isomorphism $F_{\pi(\oa)} : \Aut(\tp(\oa/\pi(\oa))/\Pc) \rightarrow G$. 

Hence, for any $\oa \models r$, the groups $G$ and $\Aut(p/\Pc)$ are relatively $\oa$-definably isomorphic via the composition $P_{\pi(\oa)} \circ F_{\pi(\oa)}^{-1}$. To complete the proof, we need to show that this morphism is actually relatively $\emptyset$-definable. To do so, it is enough (via a compactness argument) to prove that the graph of $P_{\pi(\oa)} \circ F_{\pi(\oa)}^{-1}$ is fixed by any automorphism of $\MM$. 

\begin{claim}\label{unifmor1}

For any $\oa,\ob \models r$ and $g \in G$, we have $P_{\pi(\ob)} \circ F_{\pi(\ob)}^{-1}(g) = P_{\pi(\oa)} \circ F_{\pi(\oa)}^{-1} (g)$.

\end{claim}

\begin{proof}

By the proof of Proposition \ref{retmorph}, if $\oa,\ob$ are realizations of $r$ and $g \in G$, then there is $\sigma \in \Aut(q/\Pc)$ such that $F_{\pi(\oa)}^{-1}(g)$ is the restriction of $\sigma$ to $\Aut(\tp(\oa/\pi(\oa))/\Pc)$ and $F_{\pi(\ob)}^{-1}(g)$ is the restriction of $\sigma$ to $\Aut(\tp(\ob/\pi(\ob))/\Pc)$. Hence $F_{\pi(\oa)}^{-1}(g) = (\oa,\sigma(\oa))/E$ and $F_{\pi(\ob)}^{-1}(g) = (g_{\pi(\oa), \pi(\ob)}(\oa), \sigma(g_{\pi(\oa), \pi(\ob)}(\oa)))/E$, as $g_{\pi(\oa), \pi(\ob)}(\oa) \models r$. We then obtain:

\begin{align*}
    P_{\pi(\ob)} \circ F_{\pi(\ob)}^{-1}(g) & = P_{\pi(\ob)}((g_{\pi(\oa), \pi(\ob)}(\oa), \sigma(g_{\pi(\oa), \pi(\ob)}(\oa)))/E) \\
    & = (\rho(g_{\pi(\oa), \pi(\ob)}(\oa)), \rho(\sigma(g_{\pi(\oa), \pi(\ob)}(\oa))))/E' \\
    & = (\rho(g_{\pi(\oa), \pi(\ob)}(\oa)), \sigma(\rho(g_{\pi(\oa), \pi(\ob)}(\oa))))/E' \\
    & = (\rho(\oa), \sigma(\rho(\oa)))/E'\text{ by definition of } \rho \\
    & = P_{\pi(\oa)}(\oa,\sigma(\oa)/E) \\
    & = P_{\pi(\oa)} \circ F_{\pi(\oa)}^{-1} (g)
\end{align*}

\end{proof}

Now let $g \in G$, let $(g, P_{\pi(\oa)} \circ F_{\pi(\oa)}^{-1}(g))$ be in the graph of $P_{\pi(\oa)} \circ F_{\pi(\oa)}^{-1}$, and let $\mu$ be an automorphism of $\MM$. We want to show that $\mu(g, P_{\pi(\oa)} \circ F_{\pi(\oa)}^{-1}(g))$ is also in the graph of $P_{\pi(\oa)} \circ F_{\pi(\oa)}^{-1}$. 

\begin{claim}\label{unifmor2}

We have $\mu(F_{\pi(\oa)}) = F_{\pi(\mu(\oa))}$. 

\end{claim}

\begin{proof}

This is because the maps $F_{\pi(\oa)}^{-1}$ are uniformly $\pi(\oa)$-definable.

\end{proof}

\begin{claim}\label{unifmor3}

For any $\sigma \in \Aut(\tp(\oa/\pi(\oa))/\Pc)$, we have $\mu(P_{\pi(\oa)}(\sigma)) = P_{\pi(\mu(\oa))}(\mu(\sigma))$.

\end{claim}

\begin{proof}

The set $\Mor(\G)$ is $\emptyset$-type-definable, hence for any $\sigma \in \Aut(\tp(\oa/\pi(\oa))/\Pc)$ we have $\mu(\sigma) = \tau \in \Mor(\G)$. In particular, we obtain $\mu(\sigma(\oa)) = \tau(\mu(\oa))$, which yields: 

\begin{align*}
    \mu(P_{\pi(\oa)}(\sigma)) & = \mu(P_{\pi(\oa)}((\oa,\sigma(\oa))/E)) \\
    & = (\rho(\mu(\oa)),\rho(\mu(\sigma(\oa))))/E' \\
    & = (\rho(\mu(\oa)),\rho(\tau(\mu(\oa))))/E' \\
    & = P_{\pi(\mu(\oa))}((\mu(\oa)), \tau(\mu(\oa))/E) \\
    & = P_{\pi(\mu(\oa))}(\tau) \\
    & = P_{\pi(\mu(\oa))}(\mu(\sigma)) 
\end{align*}

\end{proof}

Putting everything together, we obtain: 

\begin{align*}
    \mu(P_{\pi(\oa)} \circ F_{\pi(\oa)}^{-1}(g)) & = \mu(P_{\pi(\oa)}) \circ \mu(F_{\pi(\oa)})^{-1}(\mu(g)) \\
    & = P_{\pi(\mu(\oa))} \circ F_{\pi(\mu(\oa))}(\mu(g)) \text{ by Claims } \ref{unifmor2} \text{ and } \ref{unifmor3}\\ 
    & = P_{\pi(\oa)} \circ F_{\pi(\oa)}^{-1}(\mu(g)) \text{ by Claim }\ref{unifmor1}
\end{align*}

\noindent so $(\mu(g),P_{\pi(\oa)} \circ F_{\pi(\oa)}^{-1}(\mu(g)))$ belongs to the graph of $P_{\pi(\oa)} \circ F_{\pi(\oa)}^{-1}$, what we needed to prove.

\end{proof}

If $\pi(q)$ is $\Pc$-internal, it has an $\emptyset$-type-definable binding group $\Aut(\pi(q)/\Pc)$, and we have: 

\begin{theorem}\label{retgrp}
If $\G$ is retractable and $\pi(q)$ is $\Pc$-internal and fundamental, then $q$ is $\Pc$-internal and $\Aut(q/\Pc)$ is $\emptyset$-definably isomorphic to $G \times \Aut(\pi(q)/\Pc)$.
\end{theorem}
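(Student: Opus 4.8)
The plan is to produce an explicit relatively $\emptyset$-definable isomorphism $\Theta\colon \Aut(q/\Pc)\to G\times\Aut(\pi(q)/\Pc)$, built from the full faithful functor $F\colon\G\to G$ witnessing retractability, the surjective morphism $R\colon\Aut(q/\Pc)\to G$ of Proposition~\ref{retmorph}, and the functor $\Pi\colon\G\to\Aut(\pi(q)/\Pc)$ of Remark~\ref{recfun} (available because $\pi(q)$ is $\Pc$-internal and fundamental).

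First, by Corollary~\ref{retint} the type $q$ is $\Pc$-internal, so $\Aut(q/\Pc)$ is an $\emptyset$-type-definable group by Theorem~\ref{bindgrp} and the statement makes sense. Next, since $q\in S(\emptyset)$, every $\hat\tau\in\Aut(\MM/\Pc)$ restricts to a permutation of $q(\MM)$, hence to an element of $\Aut(q/\Pc)$, and restricting further to $\pi(q)(\MM)=\pi(q(\MM))$ yields an element of $\Aut(\pi(q)/\Pc)$ which depends only on that element of $\Aut(q/\Pc)$; this gives a surjective relatively $\emptyset$-definable homomorphism $\pi_*\colon\Aut(q/\Pc)\to\Aut(\pi(q)/\Pc)$, $\sigma\mapsto\sigma\vert_{\pi(q)(\MM)}$. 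I then set $\Theta:=(R,\pi_*)$, a relatively $\emptyset$-definable homomorphism, and establish that it is bijective by tying its two components to a common fiber. Recall from the proof of Proposition~\ref{retmorph} that, for $\sigma\in\Aut(q/\Pc)$ and $a\models q$, the restriction $\sigma\vert_{\pi(a)}$ is a morphism of $\G$ from $\pi(a)$ to $\sigma(\pi(a))$ with $R(\sigma)=F(\sigma\vert_{\pi(a)})$; and I claim that likewise $\pi_*(\sigma)=\Pi(\sigma\vert_{\pi(a)})$. Indeed, both are elements of $\Aut(\pi(q)/\Pc)$ sending $\pi(a)$ to $\sigma(\pi(a))$, and since $\pi(q)$ is fundamental the realization $\pi(a)$ is a fundamental system for it, so at most one element of $\Aut(\pi(q)/\Pc)$ has this property.

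Injectivity of $\Theta$ now follows: if $\pi_*(\sigma)=\id$ then each $\sigma\vert_{\pi(a)}$ lies in the isotropy group $\Mor(\pi(a),\pi(a))$, on which $F$ is injective by faithfulness, so $R(\sigma)=\id_G$ forces $\sigma\vert_{\pi(a)}=\id_{\pi(a)}$ for every $a\models q$, whence $\sigma=\id$. For surjectivity, let $(g,\mu)\in G\times\Aut(\pi(q)/\Pc)$, fix $a_0\models q$, and put $c:=\mu(\pi(a_0))\in\pi(q)(\MM)$. Since $\mu$ is induced by some automorphism of $\MM$ fixing $\Pc$ pointwise and sending $\pi(a_0)$ to $c$, the set $\Mor(\pi(a_0),c)$ is nonempty, so by fullness of $F$ there is $\sigma_0\in\Mor(\pi(a_0),c)$ with $F(\sigma_0)=g$. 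By definition of the morphisms of $\G$, this $\sigma_0$ is the restriction to $q_{\pi(a_0)}(\MM)$ of some $\hat\sigma_0\in\Aut(\MM/\Pc)$; setting $\sigma:=\hat\sigma_0\vert_{q(\MM)}\in\Aut(q/\Pc)$ we get $\sigma\vert_{\pi(a_0)}=\sigma_0$, hence $R(\sigma)=F(\sigma_0)=g$ and $\pi_*(\sigma)=\Pi(\sigma_0)=\mu$ (the last equality because $\Pi(\sigma_0)$ and $\mu$ both send $\pi(a_0)$ to $c$). Thus $\Theta$ is a bijective relatively $\emptyset$-definable homomorphism; the graph of $\Theta^{-1}$ is that of $\Theta$ with its two blocks of coordinates interchanged, so $\Theta^{-1}$ is relatively $\emptyset$-definable as well, and $\Theta$ is an $\emptyset$-definable isomorphism.

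The step I expect to take the most care is not exhibiting $\Theta$ but checking that the maps $R$, $\pi_*$, $\Pi$ and their composites really are defined over $\emptyset$ once everything is rewritten through the codings of $\Aut(q/\Pc)$, $\Aut(\pi(q)/\Pc)$ and $\Mor(\G)$ supplied by Theorems~\ref{bindgrp} and~\ref{bindoid}; and the fundamentality of $\pi(q)$ enters precisely at the identification $\pi_*(\sigma)=\Pi(\sigma\vert_{\pi(a)})$, since without it $\Pi(\sigma\vert_{\pi(a)})$ would only pin down a coset in $\Aut(\pi(q)/\Pc)$.
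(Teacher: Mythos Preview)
Your proof is correct and runs along the same lines as the paper's: both build a homomorphism from $\Aut(q/\Pc)$ to the product and check it is bijective, with fundamentality of $\pi(q)$ entering at the same point (uniqueness of an element of $\Aut(\pi(q)/\Pc)$ with a prescribed value on a single realization). The packaging differs slightly. The paper works with the type $p=\rho(q)$ of Proposition~\ref{retprod} and the induced map $\overline{\rho}\colon\Aut(q/\Pc)\to\Aut(p/\Pc)$, then invokes Proposition~\ref{retiso} to identify $\Aut(p/\Pc)$ with $G$; it obtains the isomorphism by showing $\ker(\overline{\pi})\cap\ker(\overline{\rho})=\{\id\}$ and $\Aut(q/\Pc)=\ker(\overline{\rho})\cdot\ker(\overline{\pi})$, i.e.\ an internal direct product decomposition. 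You instead go straight to $G$ via the map $R$ of Proposition~\ref{retmorph}, bypassing $p$ and Proposition~\ref{retiso}; your injectivity argument is the paper's kernel-intersection computation unpacked through $F$, and your surjectivity is a direct lift of a pair $(g,\mu)$ rather than a factorization of an arbitrary $\sigma$. Your route is a bit more direct, at the cost of having to check that $R$ (and not just $\overline{\rho}$) is relatively $\emptyset$-definable once $\Aut(q/\Pc)$ is coded as in Theorem~\ref{bindgrp}; you rightly flag this, and it is the exact analogue of the ``techniques similar to Proposition~\ref{retiso}'' step that the paper also leaves to the reader.
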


\begin{proof}

We know from Corollary \ref{retint} that $q$ is internal. Let $\oa$ be a fundamental system of solutions for $q$. 

Recall that there are two $\emptyset$-definable quotient maps $\pi : q(\MM) \rightarrow \pi(q)(\MM)$ and $\rho : q(\MM) \rightarrow p(\MM) = \rho(q)(\MM) $. The tuples $\pi(\oa)$ and $\rho(\oa)$ are fundamental systems of solutions for $\pi(q)$ and $\rho(q)$. As was done in Proposition \ref{retiso}, we can use this to construct two $\oa$-definable surjective group morphisms $\overline{\pi} : \Aut(q/\Pc) \rightarrow \Aut(\pi(q)/\Pc)$ and $\overline{\rho} : \Aut(q/\Pc) \rightarrow \Aut(\rho(q)/\Pc)$. Using techniques similar to the ones in Proposition \ref{retiso}, we can prove that these two morphisms are $\emptyset$-definable. 

Hence we have produced two $\emptyset$-definable group morphisms $\overline{\pi} : \Aut(q/\Pc) \rightarrow \Aut(\pi(q)/\Pc)$ and $\overline{\rho} : \Aut(q/\Pc) \rightarrow \Aut(\rho(q)/\Pc)$, both surjective. To obtain the desired isomorphism, it would be enough to prove that $\ker (\overline{\pi}) \cap \ker (\overline{\rho}) = \id$ and that any element of $\Aut (q/\Pc)$ can be written as the product of an element of $\ker (\overline{\rho})$ and an element of $\ker (\overline{\pi})$. 

Suppose that $\sigma \in \ker(\overline{\pi}) \cap \ker (\overline{\rho})$, and let $a \models q$. Then $\sigma$ fixes $\pi^{-1} \{ \pi(a) \}$ setwise. But $\sigma \in \ker(\overline{\rho})$, hence must fix $\pi^{-1} \{ \pi(a) \}$ pointwise. Since this is true for any $a \models q$, we conclude that $\sigma = \id$.

Let $\sigma $ be any morphism in $\Aut(q/\Pc)$ and $a \models q$. Consider $g_{\pi(a), \pi(\sigma(a))} \in \G$. It extends to an automorphism $\tau \in \Aut(q/\Pc)$ by Fact \ref{tenzig}, which has to belong to $\ker(\overline{\rho})$. We can write $\sigma = \tau \circ \tau^{-1} \circ \sigma$, so we only need to prove that $\tau^{-1} \circ \sigma \in \ker(\overline{\pi})$. But $\pi(\tau^{-1} \circ \sigma(a)) = \pi(a)$ and $\pi(q)$ is fundamental, so this implies $\overline{\pi}(\tau^{-1} \circ \sigma ) = \id$.

\end{proof}

\begin{rmk}

The assumption that $\pi(q)$ is fundamental seems necessary for this proof to go through. We still do not know if this theorem is valid without that assumption.

\end{rmk}

So retractability of the groupoid gives a lot more than just internality of the type. In fact, internality does not imply retractability, even if the groupoid is connected. 

\begin{example}

Consider the two sorted structure $\MM = (G,X, \mathcal{L}_G, *)$ with one sort being a connected stable group $G$ in the language $\mathcal{L}_G$, and the other sort being a principal homogeneous space $X$ for $G$, with group action $*$. We will work in $\MM^{eq}$. 

One can quickly prove that the sort $X$ has only one 1-type $q$ over $\emptyset$, and that this type is stationary and internal to $G$, with binding group isomorphic to $G$. 

Assume that there is an $\emptyset$-definable normal subgroup $H$ of $G$, such that the short exact sequence: \\

$1 \rightarrow H \rightarrow G \rightarrow G/H \rightarrow 1$ \\

\noindent does not definably split.

The group action of $G$ on $X$ defines an equivalence relation $E$, where the class of an element $a \in X$ is its orbit $H * a$. Hence, we can define a map $\pi : X \rightarrow X/E$, sending $a \in X$ to $H * a$. This is $\emptyset$-definable, we have $\Aut(\pi(q)/G) \cong G/H$ and for any $a$, that $\Aut(\tp(a/\pi(a)) = H$. The type $q$ is relatively $G$-internal via $\pi$, yielding a groupoid $\G$. Since $G/H$ acts transitively on $\pi(q)$, this groupoid is connected. Moreover, we have a definable short exact sequence: \\

$1 \rightarrow H \rightarrow G \rightarrow G/H \rightarrow 1$ \\

\noindent which, by assumption, is not definably split. However, if $\G$ was retractable, our previous work implies that this sequence would be definably split. Hence $\G$ is not retractable, even though $q$ is $G$-internal.

\end{example}

In the next section, we will introduce a necessary and sufficient condition for internality, using Delta groupoids.

\bigskip
\section{Delta groupoids and collapsing}
\label{coversection}
\noindent

In this section, we are again working with a family of partial types $\Pc$ over the empty set, a type $q \in S(\emptyset)$, and an $\emptyset$-definable function $\pi$ such that $q$ is relatively $\Pc$-internal via $\pi$. We have obtained a groupoid from this relatively internal type. But since $q$ is stationary, for any $n$, we can form the product of $q$ with itself $n$-times, denoted $q^{(n)}$. We still have an $\emptyset$-definable projection map $\pi^{(n)}$, given by applying $\pi$ on each coordinate, and its fibers are $\Pc$-internal too. All we are missing to get relative internality and apply Theorem \ref{bindoid} is that the type of a fiber be stationary. This is an easy application of forking calculus. 

\begin{fact}
For any $n$ and $(a_1, \cdots ,a_n) \models q^{(n)}$, $\tp(a_1, \cdots, a_n/\pi(a_1), \cdots , \pi(a_n))$ is stationary.

\end{fact}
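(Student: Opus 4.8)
The statement to prove is that for any $n$ and any $(a_1,\dots,a_n)\models q^{(n)}$, the type $\tp(a_1,\dots,a_n/\pi(a_1),\dots,\pi(a_n))$ is stationary. Recall that $q^{(n)}$ is the type of an independent (over $\emptyset$) $n$-tuple of realizations of $q$, so in particular each $a_i\models q$ and the $a_i$ are mutually independent over $\emptyset$. The hypothesis of relative internality gives us that each $\tp(a_i/\pi(a_i))=q_{\pi(a_i)}$ is stationary. The plan is a straightforward induction on $n$ using forking calculus, with the base case $n=1$ being exactly the hypothesis.

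\begin{proof}
We proceed by induction on $n$; the case $n=1$ is part of the definition of relative internality. Assume the result for $n-1$, and let $(a_1,\dots,a_n)\models q^{(n)}$. Write $\oa'=(a_1,\dots,a_{n-1})$ and $\oee'=(\pi(a_1),\dots,\pi(a_{n-1}))$, so that $\tp(\oa'/\oee')$ is stationary by induction hypothesis. Since $\oee'\in\dcl(\oa')$ and $\pi(a_n)\in\dcl(a_n)$, and since $a_n\ind a_1,\dots,a_{n-1}$ over $\emptyset$, we get in particular $a_n\ind_\emptyset \oa'\oee'$, hence also $\pi(a_n)\ind_\emptyset \oa'\oee'$, so by transitivity $a_n\ind_{\pi(a_n)}\oa'\oee'$. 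Equivalently, $\oa'\oee'\ind_{\pi(a_n)}a_n$. Now $\tp(a_n/\pi(a_n))$ is stationary; a stationary type stays stationary under adding parameters independent from it, so $\tp(a_n/\pi(a_n)\,\oa'\,\oee')$ is stationary. Symmetrically, from $a_n\ind_\emptyset\oa'$ and $\pi(a_n)\in\dcl(a_n)$ one gets $\oa'\ind_{\oee'}\pi(a_n)$, hence $\oa'\ind_{\oee'}\pi(a_n)$ and, since $\tp(\oa'/\oee')$ is stationary, so is $\tp(\oa'/\oee'\,\pi(a_n))$. Finally, stationarity of $\tp(\oa'/\oee'\pi(a_n))$ together with stationarity of $\tp(a_n/\oa'\oee'\pi(a_n))$ gives, by the standard fact that stationarity is preserved under such two-step amalgamation, that $\tp(\oa'\,a_n/\oee'\,\pi(a_n))=\tp(a_1,\dots,a_n/\pi(a_1),\dots,\pi(a_n))$ is stationary. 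This completes the induction.
\end{proof}

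The only point requiring any care is the bookkeeping of independences: the key observation is that $\pi(a_n)\in\dcl(a_n)$ lets one move $\pi(a_n)$ in and out of the base freely, and $\oee'\in\dcl(\oa')$ does the same for $\oee'$, so the global independence of the $a_i$ furnishes exactly the two independence statements $a_n\ind_{\pi(a_n)}\oa'\oee'$ and $\oa'\ind_{\oee'}\pi(a_n)$ that feed the stationarity-amalgamation lemma. There is no real obstacle here; the statement is, as the authors say, an easy application of forking calculus, and its role is simply to license applying Theorem~\ref{bindoid} to $q^{(n)}$ and $\pi^{(n)}$ in the sequel.
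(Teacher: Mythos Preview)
Your proof is correct and is exactly the kind of forking-calculus argument the paper has in mind; note that the paper does not actually supply a proof, only the remark that this fact is ``an easy application of forking calculus.'' Your induction, together with the standard two-step stationarity lemma (if $\tp(a/C)$ and $\tp(b/Ca)$ are both stationary then so is $\tp(ab/C)$), is the natural way to unpack that remark.
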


Hence for each $n \geq 1$, the type $q^{(n)}$, together with the map $\pi^{(n)}$, satisfies the assumptions of Theorem \ref{bindoid}. We therefore obtain a sequence $\mathcal{G}_n$ of $\emptyset$-type-definable groupoids. Our first groupoid $\mathcal{G}$, associated to $q$ and $\pi$, becomes $\mathcal{G}_1$ in this new notation.

Recall that $\mathcal{G}$ was constructed using a type $r$, which corresponds to a fundamental system of solutions of the type $q_{\pi(a)} = \tp(a/\pi(a))$, for some (any) $a \models q$. Morphisms of $\G$ were then obtained as elements of $X/E$, where $X$ is an $\emptyset$-type-definable subset of $r(\MM)^2$, and $E$ is a relatively $\emptyset$-definable equivalence relation on $X \times X$.

Notice that for any $\oa \models q^{(n)}$, the type $\tp(\oa/\pi(\oa))$ has a fundamental solution that is a realization of $r^{(n)}$. Hence, for each $n$, the type $r^{(n)}$ will play for $q^{(n)}$ and $\pi^{(n)}$ the same role as $r$ for $q$ and $\pi$. Thus we obtain, for each $n$, an $\emptyset$-type-definable subset $X_n \subset r^{(n)}(\MM)^2$ and a relatively $\emptyset$-definable equivalence relation $E_n$ on $X_n \times X_n$, such that $\Mor(\G_n)$ is given by $X_n / E_n$.

This yields $\emptyset$-definable functors between the $\G_n$. To see this, let us introduce some notation: if $\oa = (a_1, \cdots , a_n)$ is a tuple, then for any $1 \leq i \leq n$, we denote $\oa^{\wedge i} = (a_1,\cdots, \hat{a_i}, \cdots a_n)$ where the hat means the corresponding coordinate has been removed. Now, if $n>1$, an element $\sigma$ of $\Mor(\G_n)$ corresponds to the $E_n$-class of $(\oa, \ob) = ((a_1, \cdots , a_n), (b_1, \cdots , b_n))$, where $\oa$ and $\ob$ are realizations of $r^{(n)}$. For any $1 \leq i \leq n $, we can then send $(\oa, \ob)/E_n$ to $(\oa^{\wedge i},\ob^{\wedge i})/E_{n-1}$. This is well defined, as $(\oa^{\wedge i} ,\ob^{\wedge i}) \in X_{n-1}^2$, and $\emptyset$-definable.

For each $n>1$ and each $1 \leq i \leq n$, we hence obtain $\emptyset$-definable maps: 

\begin{align*}
    \partial_i^n : \Mor(\G_n) & \rightarrow \Mor(\G_{n-1}) \\
    (\oa ,\ob)/E_n & \rightarrow (\oa ^{\wedge i} ,\ob ^{\wedge i})/E_{n-1}
\end{align*}

\noindent and by setting $\partial_i^n(\pi(\oa)) = \pi(\oa ^{\wedge i})$, we can easily check that each $\partial_i^n$ is an $\emptyset$-definable functor from $\G_{n}$ to $\G_{n-1}$.

These functors have a clear interpretation as restrictions of partial automorphisms. Indeed, if $\oa = (a_1, \cdots ,a_n) \models r^{(n)}$ and $\ob = (b_1, \cdots ,b_n) \models r^{(n)}$, then an element $\sigma$ of $\Hom(\G_n)(\pi(\oa),\pi(\ob))$ is a bijection: 

\begin{align*}
    \sigma : \tp(\oa/\pi(\oa))(\MM) \rightarrow \tp(\ob/\pi(\ob))(\MM)
\end{align*}

\noindent which is the restriction of an automorphism of $\MM$ fixing $\Pc$ pointwise. The element $\partial_i^n(\sigma)$ of $\G_{n-1}$ is then the restriction of $\sigma$ to a bijection:
 
 \begin{align*}
    \partial_i^n(\sigma) : \tp(\oa^{\wedge i}/\pi(\oa^{\wedge i}))(\MM) \rightarrow\tp(\ob^ {\wedge i }/\pi(\ob^ {\wedge i }))(\MM)
 \end{align*}
 
 \noindent which still is the restriction of the same global automorphism.

\begin{rmk}
If we assume that $\pi(q)$ is $\Pc$-internal, then by Remark \ref{funind}, there is some $n$ such that any $n$ independent realizations of $\pi(q)$ form a fundamental system of solutions. Therefore we obtain, as was done in Remark \ref{recfun}, a definable functor $F: \G_n \rightarrow \Aut(\pi(q)^{(n)}/\Pc)$.
\end{rmk}

We are now ready to define the algebraic structure of interest, which will be an $\emptyset$-type-definable Delta groupoid.

\begin{definition}

A Delta groupoid is the following data: 

\begin{enumerate}

\item For every integer $n \in \NN \backslash \lbrace 0 \rbrace$, a groupoid $\G _n$
\item For every integer $n \in \NN \backslash \lbrace 0,1 \rbrace$, and every $i \in \lbrace 0, \cdots ,n \rbrace$, a groupoid morphism (that is, a functor) $\partial_i^n: \G_{n} \rightarrow \G_{n-1}$, called a face map

\end{enumerate}

\noindent subject to the following condition:

\item $\partial_i^{n} \circ \partial_j^{n+1} = \partial_{j-1}^n \circ \partial_i^{n+1}$ for all $i<j \leq n$ and $n \geq 1$.

\end{definition}

Note that this definition, while adapted to our purpose, is not the one usually given in the simplicial homotopy literature. The interested reader can find an alternative category-theoretic definition in \cite{friedman2012survey}.

\begin{definition}

A Delta groupoid $\G$ is $\emptyset$-type-definable if every groupoid $\G_n$ is $\emptyset$-type-definable, and all the face maps are $\emptyset$-type-definable. 

\end{definition}

The previously defined groupoids $\G _n$ and maps $\partial$ are then easily checked to form an $\emptyset$-type-definable Delta groupoid. We will denote it by $\G$ (the previously constructed groupoid now becomes $\G_1$). Remark that the $\G _n$ are not uniformly type-definable (they do not even live in the same sorts). 

\begin{notation}
If $\oa \models q^{n}$ for some $n$, then the type $\tp(\oa/\pi(\oa))$ is $\Pc$-internal, and we will denote $G_{\pi(\oa)}$ its binding group. It is $\Mor(\pi(\oa),\pi(\oa))$ in $\G_n$.
\end{notation}

Using the Delta groupoid structure, the data of the $G_{\pi(\oa)}$ can be formed into a projective system of type-definable groups. Indeed, we can take our directed set to be $\lbrace \pi(\oa): \pi(\oa) \models \pi^n(q^{(n)}) \text{ for some } n \rbrace$, with $(\pi(a_1), \cdots ,\pi(a_n)) \leq (\pi(b_1), \cdots ,\pi(b_m))$ if and only if $n \leq m$ and $\pi(a_i) = \pi(b_i)$ for all $i \leq n$. If $\pi(\oa) \leq \pi(\ob)$, the restriction map $G_{\pi(\ob)} \rightarrow G_{\pi(\oa)}$ is definable, as it is a composition of face maps. These maps, together with the $G_{\pi(\oa)}$, are easily checked to form a projective system. In particular, we obtain the projective limit $\varprojlim G_{\pi(\oa)}$.

\begin{definition}
The Delta groupoid $\G$ is said to collapse if there is a tuple $\oa$ of independent realizations of $q$ such that for any $\ob \geq \oa$, the map $G_{\pi(\ob)} \rightarrow G_{\pi(\oa)}$ is injective. It is said to almost collapse if the maps $G_{\pi(\ob)} \rightarrow G_{\pi(\oa)}$ have finite kernel instead.
\end{definition}

These maps $G_{\pi(\ob)} \rightarrow G_{\pi(\oa)}$ are not necessarily surjective, but some will be if $\pi(q)$ is $\Pc$-internal:

\begin{rmk}\label{facesurj}

If $\pi(q)$ is $\Pc$-internal, then there is $m \in \mathbb{N}$ such that for all $n \geq m$, all $\pi(\oa) \models q^{(n)}$ and $\pi(\oa) \leq \pi(\ob)$, the map $G_{\pi(\ob)} \rightarrow G_{\pi(\oa)}$ is surjective. 

\end{rmk}

\begin{proof}

Let $\oa_0 \models q^{(m)}$ be such that $\pi(\oa_0)$ is a fundamental system of solutions for $\pi(q)$ (such an $\oa_0$ exist by Remark \ref{funind}). Then any $m$ independent realizations of $\pi(q)$ will be a fundamental system a solutions. Hence for any $n \geq m$ and any $\oa \models q^{(n)}$, the tuple $\pi(\oa)$ is a fundamental system of solutions for $\pi(q)$.

Fix $\oa \models q^{(n)}$ for $n \geq m$ and $\pi(\ob) \geq \pi(\oa)$, consider the map $G_{\pi(\ob)} \rightarrow G_{\pi(\oa)}$. Let $\sigma \in G_{\pi(\oa)}$, it is the restriction to $\tp(\oa/\pi(\oa))(\MM)$ of an automorphism $\Tilde{\sigma}$ of $\MM$. But $\pi(\oa)$ is a fundamental system of solutions for $\pi(q)$, and $\Tilde{\sigma}$ fixes $\pi(\oa)$. Hence $\Tilde{\sigma}$ fixes $\pi(q)(\MM)$, and in particular fixes $\pi(\ob)$. Therefore $\Tilde{\sigma}$ restricts to an element of $G_{\pi(\ob)}$, and the image of this element under $G_{\pi(\ob)} \rightarrow G_{\pi(\oa)}$ has to be $\sigma$.

\end{proof}

We will now prove a very useful equivalent condition. 

\begin{lemma}\label{colleqind}

The Delta groupoid associated to $q, \pi$ and $\Pc$ collapses (respectively almost collapses) if and only if there is a tuple $\oa$ of independent realizations of $r$ such that for any (some) $b \models q$, independent of $\oa$, we have $b \in \dcl(\oa, \pi(b), \Pc)$ (respectively $b \in \acl(\oa, \pi(b), \Pc)$). 

\end{lemma}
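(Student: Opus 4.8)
The plan is to unwind the definition of collapsing into the concrete statement about binding groups and then translate group-theoretic injectivity into a definability statement via the standard correspondence between binding groups and orbits (Fact~\ref{tenzig}). First I would fix notation: the collapse condition says there is a tuple $\oa$ of independent realizations of $q$ such that for every $\ob \geq \oa$ the restriction map $G_{\pi(\ob)} \to G_{\pi(\oa)}$ is injective; since a fundamental system of solutions for $\tp(\oa/\pi(\oa))$ can be taken to be a realization of $r^{(|\oa|)}$, and since $r$ itself is the type of a fundamental system for a single fiber, I would replace $\oa$ throughout by a tuple of independent realizations of $r$ without loss of generality (using Remark~\ref{funind} to keep independence, and Fact~\ref{fungroup} to identify the relevant binding groups). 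This reduces the statement to: collapse holds iff there is $\oa$ a tuple of independent realizations of $r$ such that for any (some) $b \models q$ independent from $\oa$, $b \in \dcl(\oa,\pi(b),\Pc)$.

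The key step is the following reformulation of injectivity of $G_{\pi(\ob)} \to G_{\pi(\oa)}$ where $\ob = \oa{}^\frown b$ (enlarging $\oa$ by one coordinate $b$; a straightforward induction/cofinality argument reduces the general $\ob \geq \oa$ to one-step extensions, and in fact reduces to extensions by a single realization of $q$, since every coordinate of any $\ob \geq \oa$ is a realization of $q$). An element of $G_{\pi(\ob)}$ is the restriction to $\tp(\ob/\pi(\ob))(\MM)$ of an automorphism of $\MM$ fixing $\Pc$; the face map to $G_{\pi(\oa)}$ just further restricts it to $\tp(\oa/\pi(\oa))(\MM)$. So the kernel of $G_{\pi(\ob)} \to G_{\pi(\oa)}$ consists exactly of those $\sigma$ fixing $\Pc$, $\pi(b)$ and $\oa$ pointwise, but acting on the fiber of $b$; such a $\sigma$ is trivial iff it fixes $b$. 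Hence injectivity of the map is equivalent to: every automorphism of $\MM$ fixing $\Pc\oa\pi(b)$ pointwise fixes $b$, which by Fact~\ref{tenzig} (applied over the parameters $\oa\pi(b)$) is precisely $b \in \dcl(\oa,\pi(b),\Pc)$ — equivalently, $\tp(b/\oa\pi(b)\Pc)$ is realized only by $b$. The "any versus some" equivalence follows because $\tp(b/\oa\pi(b))$ is stationary (so any two such $b$ over $\oa$ are conjugate by an automorphism fixing $\oa$, which one arranges to also respect $\Pc$ by a further application of Fact~\ref{tenzig}, after noting independence of $b$ from $\oa$). The almost-collapse case is identical with $\dcl$ replaced by $\acl$ and "trivial kernel" replaced by "finite kernel", using that a finite orbit corresponds to algebraicity.

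The main obstacle I anticipate is bookkeeping the reduction from an arbitrary $\ob \geq \oa$ in the directed system to the one-variable extension $\oa{}^\frown b$, and making sure independence hypotheses are preserved throughout: the collapse definition quantifies over all $\ob \geq \oa$, so I must check that injectivity for all one-step extensions by realizations of $q$ implies injectivity for all $\ob \geq \oa$ (a cofinality argument, since the maps form a projective system and a composite of injections is injective, while conversely each one-step map is a face-map factor of a longer one), and symmetrically that the $\dcl$-condition "for some $b$" propagates to longer tuples. A secondary subtlety is the passage between $q$ and $r$: one must verify that $b \in \dcl(\oa,\pi(b),\Pc)$ with $\oa \models r^{(k)}$ is genuinely equivalent to the group-theoretic statement about the $G_{\pi(-)}$, which are defined via the $r^{(n)}$'s in the Delta groupoid construction — but this is exactly what the construction in Section~\ref{construction} and Fact~\ref{fungroup} are set up to give, so it should go through cleanly once the correspondence "kernel element $\leftrightarrow$ nontrivial automorphism of the fiber of $b$ fixing everything else" is stated carefully.
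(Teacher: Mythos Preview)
Your approach parallels the paper's, and the direction collapse $\Rightarrow$ dcl is correct. The gap is in the converse, at the claim ``such a $\sigma$ is trivial iff it fixes $b$''. Since $b \models q$ is not in general a fundamental system for its fiber $q_{\pi(b)}$, a kernel element $\sigma$ that fixes $b$ need not fix all of $q_{\pi(b)}(\MM)$, and hence need not be trivial in $G_{\pi(\oa)\pi(b)}$: triviality there means $\sigma$ acts as the identity on every realization of $\tp(\oa b/\pi(\oa)\pi(b))$, and the second coordinates of such realizations range over \emph{all} of $q_{\pi(b)}(\MM)$, not just $b$ itself. So from $b \in \dcl(\oa,\pi(b),\Pc)$ alone you cannot conclude the kernel is trivial, and the biconditional you state fails in the direction you need for dcl $\Rightarrow$ collapse.

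The fix is exactly what the paper does: pass to a fundamental system for the new fiber. Choose $b^* \models r$ with $\pi(b^*) = \pi(b)$ and $b^* \ind \oa$ (possible since $b \ind \oa$ gives $\pi(b) \ind \oa$, and one then realizes the nonforking extension of $r$ over $\pi(b)$ to $\oa$). Each coordinate of $b^*$ is a realization of $q$ independent of $\oa$, so the dcl hypothesis in its ``any'' form, which you correctly derive from ``some'', yields $b^* \in \dcl(\oa, \pi(b), \Pc)$; hence $q_{\pi(b)}(\MM) \subset \dcl(b^*, \Pc) \subset \dcl(\oa, \pi(b), \Pc)$, and now any $\sigma$ in the kernel does fix the whole $b$-fiber. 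With this correction your one-step reduction and cofinality argument go through; the paper instead treats an arbitrary $\ob \geq \oa$ directly by applying the same fundamental-system argument to $\tp(\ob/\pi(\ob))$, but the content is identical.
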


\begin{proof}

We will only prove the equivalence for collapsing, the other equivalence being proved in a similar way. Suppose first that there is a tuple $\oa$ of realizations of $r$ such that for any $b \models q$ independent of $\oa$, we have $b \in \dcl(\oa, \pi(b), \Pc)$. Let $\pi(\ob) > \pi(\oa)$, these are tuples of independent realizations of $\pi(q)$, we want to prove that $G_{\pi(\ob)} \rightarrow G_{\pi(\oa)}$ is injective. The type $\tp(\ob /\pi(\ob))$ is $\Pc$-internal, hence it has a fundamental system of solutions $(b_1, \cdots ,b_n)$. Each of these $b_i$ is either in $\pi^{-1}(\pi(\oa))$, and hence in $\dcl(\oa, \Pc)$, or $\pi(b_i)$ is independent of $\pi(\oa)$ over $\emptyset$, and we can then assume $b_i$ to be independent of $\oa$ over $\emptyset$. In this second case, the assumption yields $b_i \in \dcl(\oa, \pi(b_i), \Pc)$. Hence we obtain $b_i \in \dcl(\oa, \pi(b_i), \Pc)$ for all $i$, so $\tp(\ob/\pi(\ob))(\MM) \subset \dcl(\oa, \pi(\ob), \Pc)$. 

Now let $\sigma \in G_{\pi(\ob))}$ be such that its image under $G_{\pi(\ob)} \rightarrow G_{\pi(\oa)}$ is the identity. Then it has to fix $\oa$, and it fixes $\pi(\ob)$ and $\Pc$ too. Hence it has to fix $\tp(\ob/\pi(\ob))(\MM)$, so it is the identity of $G_{\pi(\ob)}$.

For the other implication, suppose that the Delta groupoid collapses. Hence there is a tuple $\oa$ of independent realizations of $q$ such that for any $\pi(\ob) \geq \pi(\oa)$, the map $G_{\pi(\ob)} \rightarrow G_{\pi(\oa)}$ is injective. The type $\tp(\oa/\pi(\oa))$ is internal, and it has a fundamental system of solutions, which can be taken to be a tuple of independent realizations of $r$. From now on, we replace $\oa$ by this tuple.

We need to prove that for any $b \models q$ independent of $\oa$, we have $b \in \dcl(\oa, \pi(b), \Pc)$. To do so, it is enough, by Fact \ref{tenzig}, to prove that any automorphism $\sigma$ of $\MM$ fixing $\oa, \pi(b)$ and $\Pc$ pointwise has to fix $b$. So consider such an automorphism $\sigma$. It restricts to $\sigma \in G_{\pi(b)\pi(\oa)}$, as it fixes $\pi(b)$ and $\pi(\oa)$. But it also fixes $\oa$, which is a fundamental system of solutions for $\tp(\oa/\pi(\oa))$. Hence, its image under the map $G_{\pi(\oa)\pi(b)} \rightarrow \pi(\oa)$ is the identity, so by collapse assumption, it is itself the identity in $G_{\pi(\oa)\pi(b)}$, and in particular fixes $b$.

\end{proof}

Note that we needed the independence assumption in order for the group $G_{\pi(\oa)\pi(b)}$ to be in the Delta groupoid. However, if the type $q$ has finite weight (see \cite{pillay1996geometric} Chapter 1, Subsection 4.4 for a definition of weight), we obtain:

\begin{prop}\label{colleq}

If the type $q$ has finite weight, the Delta groupoid associated to $q, \pi$ and $\Pc$ collapses (respectively almost collapses) if and only if there is a tuple $\oa$ of independent realizations of $r$ such that for any $b \models q$, we have $b \in \dcl(\oa, \pi(b), \Pc)$ (respectively $b \in \acl(\oa, \pi(b), \Pc)$). 

\end{prop}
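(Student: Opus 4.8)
The plan is to reduce to Lemma~\ref{colleqind} and use the finite weight hypothesis to deal with realizations of $q$ that fork with the chosen parameters. One direction is immediate and does not use finite weight: if there is a tuple $\oa$ of independent realizations of $r$ with $b \in \dcl(\oa,\pi(b),\Pc)$ (resp.\ $b \in \acl(\oa,\pi(b),\Pc)$) for \emph{every} $b \models q$, then in particular this holds for every $b \models q$ independent from $\oa$, and Lemma~\ref{colleqind} gives that the Delta groupoid collapses (resp.\ almost collapses).

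Conversely, suppose the Delta groupoid collapses. By Lemma~\ref{colleqind} there is a tuple $\oa_0$ of independent realizations of $r$ such that every $b \models q$ independent from $\oa_0$ satisfies $b \in \dcl(\oa_0,\pi(b),\Pc)$; call this property $(\star)$. Since $q$, the family $\Pc$ and the function $\pi$ are all over $\emptyset$, the property $(\star)$ is invariant under $\Aut(\MM)$, hence holds of any realization of $\tp(\oa_0/\emptyset)$. Let $w$ be the weight of $q$, which is finite by hypothesis, and pick $\oa = (\oa_0^1,\dots,\oa_0^{w+1})$ where $\oa_0^1,\dots,\oa_0^{w+1}$ are mutually independent over $\emptyset$ and each realizes $\tp(\oa_0/\emptyset)$. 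Each $\oa_0^j$ is a tuple of independent realizations of $r$, and these tuples are mutually independent, so $\oa$ is again a tuple of independent realizations of $r$.

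Now fix an arbitrary $b \models q$. Since $\oa_0^1,\dots,\oa_0^{w+1}$ is an $\emptyset$-independent family of size $w+1$ and the pre-weight of $q$ is at most its weight $w$, the element $b$ cannot fork over $\emptyset$ with all of the $\oa_0^j$; choose $j$ so that $b$ is independent from $\oa_0^j$ over $\emptyset$. Applying $(\star)$ to $\oa_0^j$ gives $b \in \dcl(\oa_0^j,\pi(b),\Pc) \subseteq \dcl(\oa,\pi(b),\Pc)$, as required, and the almost-collapsing case is identical with $\acl$ in place of $\dcl$. The step I expect to need the most care is this last weight argument: if $b$ forked over $\emptyset$ with each of $\oa_0^1,\dots,\oa_0^{w+1}$, then $(\oa_0^j)_{j\le w+1}$ would be a pre-weight witness of size $w+1$ for $\tp(b/\emptyset)=q$, contradicting that the pre-weight of $q$ is bounded by its weight $w$; one should also note that a concatenation of mutually independent tuples of independent realizations of $r$ is itself a tuple of independent realizations of $r$, by transitivity of nonforking.
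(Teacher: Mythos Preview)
Your proof is correct and follows essentially the same strategy as the paper: reduce to Lemma~\ref{colleqind}, then take enough mutually independent copies of the witness tuple so that finite weight forces any $b\models q$ to be independent from at least one of them. The only cosmetic differences are that the paper phrases the transfer to other copies via an explicit automorphism $\sigma$ rather than invoking $\Aut(\MM)$-invariance of $(\star)$, and it uses a Morley sequence with an unspecified bound $n$ where you give the explicit bound $w+1$.
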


\begin{proof}

Again, we will only prove the equivalence for collapsing, the other equivalence being proved in a similar way. The right to left direction is an immediate consequence of Lemma \ref{colleqind} (and does not require superstability), so we only need to prove the left to right direction. 

Assume that the Delta groupoid collapses, and let $\oa$ be a tuple of independent realizations of $r$ such that for all $b \models q$ independent of $\oa$ over $\emptyset$, we have $b \in \dcl(\oa, \pi(b), \Pc)$, it exists by Lemma \ref{colleqind}. Pick a Morley sequence $(\oa _i)_{i \in \mathbb{N}}$ in $\tp(\oa/\emptyset)$. Because the type $q$ has finite weight there is $n \in \NN$ such that for any $b \models q$, there is $i \leq n$ such that $b$ and $\oa_i$ are independent over the empty set. Let $\sigma$ be an automorphism of $\MM$ such that $\sigma(\oa_i) = \oa$, we then have that $\sigma(b)$ is a realization of $q$, independent of $\oa$. Therefore $\sigma(b) \in \dcl(\oa, \pi(\sigma(b)), \Pc)$ by Lemma \ref{colleqind}. Applying $\sigma^{-1}$, we obtain $b \in \dcl(\oa_i, \pi(b), \Pc)$. Hence, picking $\overline{\alpha} = (\oa_1 , \cdots , \oa_n)$, for any $b \models q$, we have $b \in \dcl(\overline{\alpha}, \pi(b), \Pc)$.

\end{proof}

\begin{rmk}

Recall that in a superstable theory, any type has finite weight. Hence, this proposition is true for any type in a superstable theory.

\end{rmk}

We also can prove the following proposition, which is similar to what can be obtained for internal types:

\begin{prop}\label{collfunsys}

Let $q \in S(\emptyset)$ be relatively $\Pc$-internal via the $\emptyset$-definable function $\pi$. Suppose that there is a tuple $e \in \MM$ such that for all $a \models q$, we have $a \in \acl(\pi(a),e,\Pc)$. Then the Delta groupoid $\G$ associated to $q$ and $\pi$ almost collapses.

\end{prop}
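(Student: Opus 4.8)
The plan is to verify the $\acl$-criterion of Lemma~\ref{colleqind}. Since the hypothesis holds for \emph{every} $a\models q$, it is enough to produce finitely many realizations $a_1,\dots,a_k$ of $q$ with $a\in\acl(\pi(a),a_1,\dots,a_k,\Pc)$ for all $a\models q$. Indeed, each $\tp(a_i/\pi(a_i))$ is $\Pc$-internal by hypothesis, so (Lemma~\ref{funsys}) has a fundamental system of solutions; letting $\oa$ be the tuple assembled from these, we have $a_i\in\dcl(\oa,\Pc)$, hence $a\in\acl(\pi(a),\oa,\Pc)$ for all $a\models q$, and in particular for a $b\models q$ with $b\ind\oa$. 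Lemma~\ref{colleqind} then gives that $\G$ almost collapses. (Making $\oa$ genuinely a tuple of \emph{independent} realizations of $r$ requires the $a_i$ to lie in pairwise independent fibers; this is the bookkeeping point flagged at the end.)

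The real task is thus to replace the external parameter $e$ by something built from finitely many realizations of $q$ and of $\Pc$. Set $e^{*}:=\cb\big(\tp(e/q(\MM)\cup\Pc(\MM))\big)$, the canonical base of the (definable) type of $e$ over the realization sets of $q$ and of $\Pc$. By definition of $e^{*}$ this type is definable over $e^{*}$; since $q(\MM)$ and $\Pc(\MM)$ are $\emptyset$-invariant, any automorphism of $\MM$ fixing $e^{*}$ fixes the set $q(\MM)\cup\Pc(\MM)$ setwise and fixes the defining scheme, so it sends $e$ to a realization of $\tp(e/q(\MM)\cup\Pc(\MM))$.

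The crucial claim is that $e^{*}$ still witnesses the hypothesis, i.e.\ $a\in\acl(\pi(a),e^{*},\Pc)$ for every $a\models q$. Suppose not; then $a$ has infinitely many conjugates over $\pi(a)\cup\{e^{*}\}\cup\Pc(\MM)$. Fix a finite $\oc\models\Pc$ with $a\in\acl(\pi(a),e,\oc)$, and let $a'$ be any such conjugate, say $a'=\sigma(a)$ with $\sigma$ fixing $\pi(a)$, $e^{*}$ and $\Pc(\MM)$ pointwise. Then $\sigma$ fixes $q(\MM)\cup\Pc(\MM)$ setwise and fixes $e^{*}$, so by the previous paragraph $\sigma(e)\equiv e$ over $q(\MM)\cup\Pc(\MM)$, hence over $\{\pi(a),\oc,a'\}$ (note $\pi(a)=\pi(a')\in\dcl(a')$). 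Choose $\rho$ fixing $\pi(a),\oc,a'$ with $\rho\sigma(e)=e$; then $\rho\sigma$ fixes $\pi(a)$, $\oc$ and $e$, and sends $a$ to $a'$. As $a'$ ranges over the infinitely many conjugates, $a$ acquires infinitely many conjugates over $\{\pi(a),e,\oc\}$, contradicting $a\in\acl(\pi(a),e,\oc)$. Hence $a\in\acl(\pi(a),e^{*},\Pc)$ for all $a\models q$, as desired; this short automorphism chase is the heart of the argument.

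Finally, $e^{*}\in\dcl^{eq}(q(\MM)\cup\Pc(\MM))$, so $e^{*}\in\dcl^{eq}(a_1,\dots,a_k,\oc_1,\dots,\oc_m)$ for some $a_i\models q$ and $\oc_j\models\Pc$; thus $e^{*}\in\dcl(a_1,\dots,a_k,\Pc)$, and the claim gives $a\in\acl(\pi(a),a_1,\dots,a_k,\Pc)$ for all $a\models q$, which is what the first paragraph needed. I expect the delicate part to be exactly this last step: one must know that the canonical base $e^{*}$ is carried by a \emph{finite} set of realizations of $q$ and $\Pc$ (immediate when $T$ is superstable, $e^{*}$ then lying in $\dcl^{eq}$ of a finite Morley sequence of $\tp(e/q(\MM)\cup\Pc(\MM))$), and that the $a_i$ may be taken in pairwise independent fibers so that the fundamental systems of solutions of the $\tp(a_i/\pi(a_i))$ assemble into a tuple of independent realizations of $r$, making Lemma~\ref{colleqind} literally applicable. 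The substitution of $e$ by $e^{*}$, by contrast, is unconditional.
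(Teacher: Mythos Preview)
Your approach differs genuinely from the paper's, which takes the canonical base on the \emph{other} side: fix $a\models q$ with $a\ind e$, choose $c\in\Pc$ with $a\in\acl(\pi(a),e,c)$, and set $d=\cb(\tp(ac/\acl(e)))$. From $ac\ind_d\acl(e)$ one deduces $a\in\acl(\pi(a),c,d)$; since a Morley sequence in $\tp(ac/\acl(e))$ consists of pairs $(a_i,c_i)$ with $a_i\models q$ and $c_i\in\Pc$, the finite piece of $d$ actually used lies in $\dcl^{eq}$ of finitely many such pairs, giving $a\in\acl(\pi(a),(a_i)_{i\leq n},\Pc)$. The choices $a\ind e$ and $(a_ic_i)\ind_e ac$ then force $a\ind(a_i)$ over $\emptyset$, so stationarity transports the conclusion to every $a'\models q$ independent from $(a_i)$, and Lemma~\ref{colleqind} applies. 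The point is that this canonical base is tailored so that its Morley sequence already lives in $q(\MM)\times\Pc$ and independence from $a$ comes for free.

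Your automorphism chase proving $a\in\acl(\pi(a),e^{*},\Pc)$ for all $a$ is correct and pleasant. The gap is in the extraction step, and your parenthetical justification is mistaken: a finite Morley sequence in $\tp(e/q(\MM)\cup\Pc(\MM))$ consists of \emph{conjugates of $e$}, not of realizations of $q$ or of $\Pc$, so it does not place $e^{*}$ inside $\dcl^{eq}(a_1,\dots,a_k,\Pc)$. What does work under superstability is finite local character of forking: some finite $A_0\subset q(\MM)\cup\Pc(\MM)$ satisfies $e\ind_{A_0}q(\MM)\cup\Pc(\MM)$, whence $e^{*}\subset\acl^{eq}(A_0)$, and you recover fixed $a_1,\dots,a_k\models q$ that work for every $a$. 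In the merely stable setting this is unavailable; for a single $a$ you can still pass to a finite $e^{*}_0\subset e^{*}$ with $e^{*}_0\in\dcl^{eq}(a_1,\dots,a_k,\oc')$, but then you have no mechanism to guarantee $a\ind(a_1,\dots,a_k)$, which Lemma~\ref{colleqind} requires. The paper's dual choice of canonical base handles both issues at once, and works in arbitrary stable $T$.
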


\begin{proof}

Let $a$ be a realization of $q$, independent from $e$ over the empty set. By assumption, there is a tuple $c$ of realizations of $\Pc$ such that $a \in \acl(e, \pi(a), c)$. 

Consider $\tp(ac/\acl(e))$, it is a stationary type, let $d$ be its canonical base. Pick $(a_i c_i)_{i \in \mathbb{N}}$, a Morley sequence in $\tp(ac/\acl(e))$, which we can assume to be independent from $ac$ over $e$. We know that $ac \ind_d \acl(e) $, and from this and the assumption, forking calculus yields $a \in \acl(\pi(a),c,d)$. But $d \in \acl((a_i c_i)_{1 \leq i \leq n})$ for some $n$, hence $a \in \acl(\pi(a), (a_i c_i)_{1 \leq i \leq n}, c)$, so $a \in \acl(\pi(a), (a_i)_{1 \leq i \leq n} , \Pc)$.

Now let $a' \models q$, independent from $(a_i)_{1 \leq i \leq n}$ over the empty set. Since $a$ is independent from $e$ over the empty set, and independent over $e$ of the sequence $(a_i)_{i \in \NN}$, we have that $a$ is independent from $(a_i)_{i \in \NN}$ over the empty set. Since $q = \tp(a/\emptyset)$ is stationary, this implies that $\tp(a/(a_i)_{i \in \NN}) = \tp(a'/(a_i)_{i \in \NN})$, hence $a' \in \acl(\pi(a'), (a_i)_{1 \leq i \leq n} , \Pc)$. By Lemma \ref{colleqind}, this implies that $\G$ almost collapses.

\end{proof}

As a corollary of Lemma \ref{colleqind}, we obtain the following test for internality: 

\begin{cor}
\label{collapse}
The type $q$ is internal (respectively almost internal) to $\Pc$ if and only if and only if the Delta groupoid $\G$ collapses (respectively almost collapses) and $\pi(q)$ is internal (respectively almost internal) to $\Pc$. 
\end{cor}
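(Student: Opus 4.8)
\textbf{Proof plan for Corollary~\ref{collapse}.}
The plan is to deduce both directions from Lemma~\ref{colleqind} (or its finite-weight version Proposition~\ref{colleq}) together with the definitions of internality and almost internality. I will only write out the ``internal'' case, since the ``almost internal'' case is identical with $\dcl$ replaced by $\acl$ throughout. The key observation is that once $\pi(q)$ is $\Pc$-internal, a fundamental system of solutions $\pi(\oa)$ for $\pi(q)$ lets us absorb the $\pi(b)$ appearing in the conclusion of Lemma~\ref{colleqind} into the $\Pc$-and-parameters part.

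For the forward direction, suppose $q$ is $\Pc$-internal. Then $\pi(q)$, being the image of $q$ under an $\emptyset$-definable map, is also $\Pc$-internal (this is a standard fact: if $q(\MM) \subseteq \dcl(\Pc, B)$ then $\pi(q)(\MM) \subseteq \dcl(\Pc, B)$). For the collapse, by Lemma~\ref{colleqind} it suffices to produce a tuple $\oa$ of independent realizations of $r$ and some $b \models q$ independent of $\oa$ with $b \in \dcl(\oa, \pi(b), \Pc)$. But $\Pc$-internality of $q$ gives, by Lemma~\ref{funsys}, a fundamental system of solutions $\od$ for $q$, i.e.\ $b \in \dcl(\od, \Pc)$ for every $b \models q$; passing to the fundamental system of solutions of $\tp(\od/\pi(\od))$ inside the $\pi$-fiber (which lies in $\dcl(\od,\Pc)$ as in the proof of Proposition~\ref{colleq}) we may take a tuple $\oa \models r^{(k)}$ coming from $\od$, so that $b \in \dcl(\oa,\Pc) \subseteq \dcl(\oa,\pi(b),\Pc)$; and we may choose $b$ independent of $\oa$ since $q$ is stationary and $\oa,\pi(b)$ do not pin down $b$. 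Then Lemma~\ref{colleqind} yields collapse. (One small care point: Lemma~\ref{colleqind} demands $\oa$ be a tuple of realizations of $r$; a fundamental system of solutions of $q$ need not literally be a realization of $r$, so I will instead appeal directly to the ``some $b$'' form of Lemma~\ref{colleqind} using the realization of $r$ extracted from the $\pi$-fiber containing any chosen $b$.)

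For the reverse direction, suppose $\G$ collapses and $\pi(q)$ is $\Pc$-internal. By Lemma~\ref{colleqind}, fix a tuple $\oa$ of independent realizations of $r$ such that $b \in \dcl(\oa, \pi(b), \Pc)$ for every $b \models q$ independent of $\oa$. Since $\pi(q)$ is $\Pc$-internal there is a parameter set $C$ (which we may take to be realizations of $\pi(q)$, by Lemma~\ref{funsys}) with $\pi(q)(\MM) \subseteq \dcl(C, \Pc)$. For $b \models q$ independent of $\oa$ this gives $\pi(b) \in \dcl(C,\Pc)$, hence $b \in \dcl(\oa, C, \Pc)$; for $b \models q$ \emph{not} independent of $\oa$, extend $\oa$ to a Morley sequence and use a translating automorphism exactly as in the proof of Proposition~\ref{colleq}: replacing $\oa$ by a finite initial segment $\overline{\alpha}$ of the Morley sequence makes $b$ independent of one of its blocks, whence $b \in \dcl(\overline{\alpha}, C, \Pc)$ for all $b \models q$. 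Taking $B = \overline{\alpha}\,C$ as the witnessing parameter set, we get $q(\MM) \subseteq \dcl(B,\Pc)$, i.e.\ $q$ is $\Pc$-internal. (If one wants to avoid the finite-weight/superstability step used in Proposition~\ref{colleq}, note that the statement of Corollary~\ref{collapse} is being applied in the running context of Section~\ref{coversection}, where superstability is in force via the remark following Proposition~\ref{colleq}; alternatively, restrict to the independent $b$'s and use that $\dcl$-internality only needs to be checked on realizations independent from the parameter set.)

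\textbf{Main obstacle.} The only genuinely delicate point is bookkeeping about which tuples are realizations of $r$ versus realizations of $q$ or $q^{(n)}$, and ensuring the independence hypotheses required by Lemma~\ref{colleqind} are met on both sides; the translating-automorphism argument to handle $b$ dependent on $\oa$ (needing finite weight, hence superstability) is the one nontrivial ingredient, and everything else is a direct unwinding of the definition of $\Pc$-internality together with the preservation of internality under $\emptyset$-definable images.
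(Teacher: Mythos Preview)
Your forward direction matches the paper's approach: take a fundamental system of solutions for $q$ consisting of independent realizations, extend them to independent realizations of $r$, observe $b \in \dcl(\oa,\Pc) \subseteq \dcl(\oa,\pi(b),\Pc)$, and invoke Lemma~\ref{colleqind}. Your bookkeeping about $r$ versus $q$ is more hesitant than it needs to be, but the idea is the same.

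For the reverse direction the paper takes a shorter route than your main line. Rather than absorbing $\pi(b)$ into $\dcl(C,\Pc)$ and then worrying about realizations $b$ dependent on $\oa$, the paper simply observes that the conclusion of Lemma~\ref{colleqind} says precisely that $q$ is internal to the enlarged family $\Pc \cup \{\pi(q)\}$ (one realization $b$ independent of the parameter set $\oa$, with $b \in \dcl(\oa,\pi(b),\Pc)$, is the definition), and then invokes transitivity of internality (\cite{pillay1996geometric}, Remark~7.4.3): since $\pi(q)$ is $\Pc$-internal, internality to $\Pc \cup \{\pi(q)\}$ collapses to internality to $\Pc$. No Morley-sequence or finite-weight argument is needed.

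One correction to your parenthetical: superstability is \emph{not} in force in Section~\ref{coversection}; the paper works throughout in a general stable theory, and Proposition~\ref{colleq} is an add-on under an extra hypothesis, not the ambient setting. So that escape route is unavailable. Your second alternative --- checking internality only on a realization independent of the parameter set --- is the correct fix, and is in effect what the paper's transitivity argument packages cleanly. Your proof is therefore valid once you commit to that alternative, but the two-line ``internal to $\Pc \cup \{\pi(q)\}$, hence to $\Pc$'' formulation is the intended shortcut.
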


\begin{proof}
Once again, we will only treat the case of internality and collapse.

Suppose first that $q$ is internal to $\Pc$. We immediately get that $\pi(q)$ is internal as well.
It also yields a fundamental system of solutions, denote it $\oa$, which we can pick as a tuple of independent realizations of $q$. Moreover, we can extend $\oa$ into a tuple of independent realizations of $r$. If we now pick any $b \models q$, we have $b \in \dcl(\oa, \Pc)$, hence also $b \in \dcl(\oa, \pi(b), \Pc)$, so $\G$ collapses by Lemma \ref{colleqind}.

For the other implication, assume that $\G$ collapses and $\pi(q)$ is $\Pc$-internal. As a consequence of Lemma \ref{colleqind}, the type $q$ is internal to the family of types $\Pc \cup \lbrace \pi(q) \rbrace$. But because $\pi(q)$ is $\Pc$-internal, this implies that $q$ itself is $\Pc$-internal (see \cite{pillay1996geometric}, Remark 7.4.3).


\end{proof}

Notice that even without any internality assumption, there is always a surjective morphism $\Aut(q/\Pc) \rightarrow \Aut(\pi(q)/\Pc)$. If we assume $\pi(q)$ is $\Pc$-internal, then the target group is $\emptyset$-type-definable. 

\begin{cor}
\label{ses}

If the type $q$ is internal to $\Pc$, then there is a definable (possibly over some extra parameters) short exact sequence: \\

$1 \rightarrow \varprojlim G_{\pi(\oa)} \rightarrow \Aut(q/\Pc) \rightarrow \Aut(\pi(q)/\Pc) \rightarrow 1$\\

\noindent and the groups and morphisms are internal to $\Pc$.
\end{cor}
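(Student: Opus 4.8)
The plan is to build the sequence around the surjection of binding groups induced by $\pi$ and then to identify its kernel with the projective limit. Since $q$ is $\Pc$-internal, so is its image $\pi(q)$; hence both $\Aut(q/\Pc)$ and $\Aut(\pi(q)/\Pc)$ are type-definable by Theorem~\ref{bindgrp}, and, as binding groups of internal types, each is internal to $\Pc$ (as recalled in Section~\ref{construction}). Every $\tilde\sigma \in \Aut(\MM/\Pc)$ commutes with the $\emptyset$-definable $\pi$, so the permutation of $q(\MM)$ it induces determines, via $\pi$, the permutation of $\pi(q)(\MM)$ it induces; this gives a group homomorphism $\bar\pi\colon \Aut(q/\Pc)\to\Aut(\pi(q)/\Pc)$. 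It is surjective because both groups arise from $\Aut(\MM/\Pc)$ by restriction, compatibly with $\bar\pi$. To see that $\bar\pi$ is definable (over extra parameters), I would fix a fundamental system $\oa_0$ of independent realizations of $q$, chosen large enough, using Remark~\ref{funind} applied to $q$ and to $\pi(q)$, that $\oa_0$ is a fundamental system for $q$ and $\pi(\oa_0)$ is a fundamental system for $\pi(q)$; representing $\Aut(q/\Pc)$ on $\oa_0$ and $\Aut(\pi(q)/\Pc)$ on $\pi(\oa_0)$ via Fact~\ref{fungroup}, the map $\bar\pi$ becomes $\sigma\mapsto\pi\circ\sigma$, which is $\oa_0$-definable.

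Next I would identify $\ker(\bar\pi)$ with $\varprojlim G_{\pi(\oa)}$, the limit being taken along the chain of tuples coming from a fixed Morley sequence in $q$, so that directedness is not in question. An element of $\ker(\bar\pi)$ is the restriction to $q(\MM)$ of some $\tilde\sigma\in\Aut(\MM/\Pc)$ fixing $\pi(q)(\MM)$ pointwise; such $\tilde\sigma$ fixes $\pi(\oa)$ for every tuple $\oa$ of realizations of $q$, hence restricts to an element of $G_{\pi(\oa)}=\Aut(\tp(\oa/\pi(\oa))/\Pc)$, compatibly with the face maps (which are themselves restrictions of global automorphisms); this yields a homomorphism $\ker(\bar\pi)\to\varprojlim G_{\pi(\oa)}$, which is injective since, taking $\oa$ to be a single realization, triviality of all restrictions forces $\tilde\sigma$ to fix $q(\MM)$ pointwise. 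For surjectivity, given a compatible family $(\sigma_n)_n$ with $\sigma_n\in G_{\pi(\oa_n)}$, I would pick $N$ so large that $\oa_N$ is a fundamental system of independent realizations for $q$ with $\pi(\oa_N)$ a fundamental system for $\pi(q)$; since $q$ is internal, such an $\oa_N$ witnesses the collapse of the Delta groupoid (as in the proof of Corollary~\ref{collapse}), so every face map $G_{\pi(\oa_n)}\to G_{\pi(\oa_N)}$ with $n\ge N$ is injective. Lifting $\sigma_N$ to $\tilde\sigma\in\Aut(\MM/\Pc)$, the inclusion $\pi(q)(\MM)\subseteq\dcl(\pi(\oa_N),\Pc)$ together with the fact that $\tilde\sigma$ fixes $\Pc$ force $\tilde\sigma$ to fix $\pi(q)(\MM)$ pointwise, so $\sigma=\tilde\sigma\vert_{q(\MM)}\in\ker(\bar\pi)$. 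That $\sigma$ induces exactly the family $(\sigma_n)_n$ is then a short chase: at level $n\le N$ the face map carries $\sigma_N$ (hence $\tilde\sigma$) to $\sigma_n$, and at level $n\ge N$ both $\sigma_n$ and the restriction of $\sigma$ have image $\sigma_N$ under the now-injective face map to $G_{\pi(\oa_N)}$, hence coincide.

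Finally, I would note that $\ker(\bar\pi)$ is a type-definable subgroup of the $\Pc$-internal group $\Aut(q/\Pc)$, so it is internal to $\Pc$ (equivalently, by the collapse of $\G$ the face maps $G_{\pi(\ob)}\to G_{\pi(\oa)}$ are eventually injective, so $\varprojlim G_{\pi(\oa)}$ is isomorphic to a single binding group, which is $\Pc$-internal); the inclusion $\ker(\bar\pi)\hookrightarrow\Aut(q/\Pc)$ and the isomorphism $\ker(\bar\pi)\cong\varprojlim G_{\pi(\oa)}$ are definable given the explicit representations on $\oa_0$, and any definable map between $\Pc$-internal type-definable sets has $\Pc$-internal graph; so the displayed sequence is definable over $\oa_0$ with all terms and morphisms internal to $\Pc$. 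I expect the main obstacle to be the surjectivity step in the identification of the kernel --- producing, from a single component $\sigma_N$, a global element of $\ker(\bar\pi)$ that reproduces the whole compatible family --- since this is precisely where one must use the collapse of the Delta groupoid together with careful bookkeeping of the face maps and of the indexing of the limit.
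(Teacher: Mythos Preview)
Your proposal is correct and follows essentially the same strategy as the paper: both arguments first establish the definable surjection $\Aut(q/\Pc)\to\Aut(\pi(q)/\Pc)$ using a fundamental system $\oa_0$ for $q$ with $\pi(\oa_0)$ fundamental for $\pi(q)$, then identify the kernel with the projective limit by invoking the collapse of the Delta groupoid (Corollary~\ref{collapse}) together with the eventual surjectivity of the face maps (Remark~\ref{facesurj}), which forces the system to stabilize at some $G_{\pi(\ob)}$. The paper organizes the last step in the opposite direction---first observing $\varprojlim G_{\pi(\oa)}=G_{\pi(\ob)}$ and then constructing an explicit injection $G_{\pi(\ob)}\to\Aut(q/\Pc)$ with image $H$---whereas you build the map $\ker(\bar\pi)\to\varprojlim$ by restriction and prove surjectivity; but these are inverse descriptions of the same isomorphism.
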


\begin{proof}
Set $H = \ker(\Aut(q/\Pc) \rightarrow \Aut(\pi(q)/\Pc))$. Then we have a short exact sequence: \\

$1 \rightarrow H \rightarrow \Aut(q/\Pc) \rightarrow \Aut(\pi(q)/\Pc) \rightarrow 1$ \\

Every group in this sequence is type-definable. Moreover, the left arrow is just inclusion, so is $\emptyset$-definable. As for the right arrow, if $\sigma \in \Aut(q/\Pc)$ is represented by $(\oa,\sigma(\oa))$, we can simply send it to $(\pi(\oa),\pi(\sigma(\oa)))$, so the right arrow is definable. The groups and morphisms are internal to $\Pc$. So all we need to do to finish the proof is show that $\varprojlim G_{\pi(\oa)}$ is definably isomorphic to $H$.

Since $q$ is $\Pc$-internal the Delta groupoid associated to $q, \pi$ and $\Pc$ collapses. By Corollary \ref{collapse} there is a tuple $\ob$ of realizations of $q$ such that $G_{\pi(\oc)} \rightarrow G_{\pi(\ob)}$ is injective for any $\pi(\oc) \geq \pi(\ob)$. Moreover, since $\pi(q)$ is $\Pc$-internal, we can also assume, by Remark \ref{facesurj}, that these maps are isomorphisms, hence $\varprojlim G_{\pi(\oa)} = G_{\pi(\ob)}$. By extending $\ob$ we can assume both that $\ob$ is a fundamental system of solutions for $q$ and $\pi(\ob)$ is a fundamental system of solutions for $\pi(q)$. 
We can then define a morphism $G_{\pi(\ob)} \rightarrow \Aut(q/\Pc)$ by sending $\sigma \in G_{\pi(b)}$ to $\overline{(\ob, \sigma(\ob))}$, this is well-defined because $\ob$ is a fundamental system for $q$. It is a relatively $\ob$-definable map, and it is injective, again because $\ob$ is a fundamental system for $q$. 

But $\pi(\ob)$ is a fundamental system for $\pi(q)$, so the image of this map is contained in $H = \ker(\Aut(q/\Pc) \rightarrow \Aut(\pi(q)/\Pc))$. Finally, if $\sigma \in H$, then it has to fix $\pi(\ob)$, and hence restricts to an element of $G_{\pi(\ob)}$, which yields surjectivity of $G_{\pi(\ob)} \rightarrow H$.

\end{proof}

The splitting of the short exact sequence we obtained has, in some cases, nice consequences: 

\begin{prop}
\label{splitprod}

Suppose $q$ is $\Pc$-internal and $\pi(q)$ is fundamental. If the short exact sequence: \\

$1 \rightarrow \varprojlim G_{\oa} \rightarrow \Aut(q/\Pc) \rightarrow \Aut(\pi(q)/\Pc) \rightarrow 1$ \\

\noindent is definably split and $\G_1$ is connected, then $\G_1$ is retractable. 

\end{prop}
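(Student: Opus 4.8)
The plan is to unwind the definable splitting of the short exact sequence into a retraction data for $\G_1$. Recall from the proof of Corollary~\ref{ses} that, after enlarging $\ob$ so that $\ob$ is a fundamental system of solutions for $q$ and $\pi(\ob)$ is one for $\pi(q)$, we have an identification $\varprojlim G_{\pi(\oa)} = G_{\pi(\ob)}$ and an $\ob$-definable isomorphism $G_{\pi(\ob)} \cong H$, where $H = \ker(\Aut(q/\Pc) \to \Aut(\pi(q)/\Pc))$. A definable splitting gives us a definable section $s : \Aut(\pi(q)/\Pc) \to \Aut(q/\Pc)$, with $s(\Aut(\pi(q)/\Pc))$ a complement to $H$. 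First I would use the fact that $\pi(q)$ is fundamental: fixing $a_0 \models q$ with $\pi(a_0)$ a fundamental system for $\pi(q)$, every element $\tau$ of $\Aut(\pi(q)/\Pc)$ is determined by $\tau(\pi(a_0))$, so for any two objects $\pi(a), \pi(b)$ of $\G_1$ there is a unique $\tau \in \Aut(\pi(q)/\Pc)$ with $\tau(\pi(a)) = \pi(b)$ — and this $\tau$ is $(\pi(a),\pi(b))$-definable. Since $\G_1$ is connected, such $\pi(b)$ ranges over all objects, and this gives the "first half" of a retraction at the level of $\pi(q)$.

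Next I would push this through the section. Given objects $\pi(a), \pi(b)$ of $\G_1$, let $\tau_{a,b} \in \Aut(\pi(q)/\Pc)$ be the unique automorphism with $\tau_{a,b}(\pi(a)) = \pi(b)$, and set $\widetilde{\sigma}_{a,b} = s(\tau_{a,b}) \in \Aut(q/\Pc)$. Define $g_{\pi(a),\pi(b)} \in \Mor_{\G_1}(\pi(a),\pi(b))$ to be the restriction of $\widetilde{\sigma}_{a,b}$ to $q_{\pi(a)}(\MM)$; this restriction is indeed a morphism $\pi(a) \to \pi(b)$ of $\G_1$ because $\widetilde{\sigma}_{a,b}$ fixes $\Pc$ pointwise and sends $\pi(a)$ to $\pi(b)$, and it lands in $\Mor(\G_1)$ which is $\emptyset$-type-definable. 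The assignment $(\pi(a),\pi(b)) \mapsto g_{\pi(a),\pi(b)}$ is $\emptyset$-definable: $\tau_{a,b}$ is uniformly $(\pi(a),\pi(b))$-definable by fundamentality, $s$ is definable by hypothesis, and the restriction-to-a-morphism operation is definable by the proof of Theorem~\ref{bindoid} (morphisms are coded in $X_1/E_1$). The remaining point is the compatibility cocycle condition $g_{\pi(b),\pi(c)} \circ g_{\pi(a),\pi(b)} = g_{\pi(a),\pi(c)}$.

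For this I would argue that $\tau_{b,c} \circ \tau_{a,b} = \tau_{a,c}$ in $\Aut(\pi(q)/\Pc)$ — immediate from uniqueness, since both sides send $\pi(a)$ to $\pi(c)$ — and then use that $s$ is a group homomorphism to get $\widetilde{\sigma}_{b,c} \circ \widetilde{\sigma}_{a,b} = \widetilde{\sigma}_{a,c}$ in $\Aut(q/\Pc)$; restricting this identity of global automorphisms to the appropriate fibers gives exactly $g_{\pi(b),\pi(c)} \circ g_{\pi(a),\pi(b)} = g_{\pi(a),\pi(c)}$, and taking $\pi(a) = \pi(b) = \pi(c)$ forces $g_{\pi(a),\pi(a)} = \id_{\pi(a)}$ since $s(\id) = \id$. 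This establishes that $\G_1$ is retractable. The main obstacle I anticipate is the bookkeeping around fundamentality: making sure $\tau_{a,b}$ is genuinely well-defined and uniformly definable requires that $\pi(q)$ be fundamental (so that an element of $\Aut(\pi(q)/\Pc)$ is pinned down by its value on a single realization), which is precisely why the hypothesis appears; without it one would only get $\tau_{a,b}$ defined up to the pointwise stabilizer of $\pi(a)$ in $\Aut(\pi(q)/\Pc)$ and the construction of a single-valued definable $g$ would break down. I would double-check that the definable section $s$ can be chosen over $\emptyset$ (or absorb its parameters, as is done elsewhere in the paper via the compactness/automorphism-invariance argument of Proposition~\ref{retiso}), so that the resulting $g_{x,y}$ is $\emptyset$-definable as the definition of retractability demands.
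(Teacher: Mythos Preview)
Your proof is correct and follows essentially the same approach as the paper: use fundamentality of $\pi(q)$ to identify elements of $\Aut(\pi(q)/\Pc)$ with classes $(\pi(a),\pi(b))/E'$, apply the definable section $s$, and (implicitly in the paper, explicitly in your write-up) restrict the resulting element of $\Aut(q/\Pc)$ to the fiber $q_{\pi(a)}(\MM)$ to obtain $g_{\pi(a),\pi(b)}$. The paper's proof is considerably terser---it omits the restriction step and the discussion of whether $s$ may be taken over $\emptyset$---so your more careful treatment of these points is an improvement rather than a deviation.
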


\begin{proof}

Since $\pi(q)$ is fundamental, an element of $\Aut(\pi(q)/\Pc)$ is then defined as the class of $(\pi(a), \pi(b))$, for $\pi(a),\pi(b)$ two realizations of $\pi(q)$. Let $s$ be a section of the short exact sequence. We can then define $g_{\pi(a), \pi(b)} = s((\pi(a),\pi(b))/E')$, where $E'$ is the equivalence relation used to define $\Aut(\pi(q)/\Pc)$. This is uniformly $\emptyset$-definable, and the compatibility condition is easily checked. 

\end{proof}

We hence obtain a partial converse to Theorem \ref{retgrp}: 

\begin{theorem}

Suppose $q$ is $\Pc$-internal. Assume $\G_1$ is connected, and $\pi(q)$ is fundamental. Then $\G_1$ is retractable if and only if the short exact sequence: \\ 

$1 \rightarrow \varprojlim G_{\oa} \rightarrow \Aut(q/\Pc) \rightarrow \Aut(\pi(q)/\Pc) \rightarrow 1$ \\

\noindent is definably split.

\end{theorem}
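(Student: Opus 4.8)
The statement is a biconditional, and one direction is already available: the implication ``the sequence is definably split $\Rightarrow$ $\G_1$ is retractable'' is exactly Proposition~\ref{splitprod}, whose hypotheses ($q$ is $\Pc$-internal, $\pi(q)$ fundamental, $\G_1$ connected) are all among the standing assumptions here. So the only thing to prove is the forward implication: if $\G_1$ is retractable then the short exact sequence $1 \to \varprojlim G_{\oa} \to \Aut(q/\Pc) \to \Aut(\pi(q)/\Pc) \to 1$ splits definably. The plan is to read this splitting off the product decomposition produced by Theorem~\ref{retgrp}.

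\textbf{Setting up.} Since $\G_1$ is retractable and $\pi(q)$ is $\Pc$-internal and fundamental, Theorem~\ref{retgrp} applies and gives that $q$ is $\Pc$-internal, together with an $\emptyset$-definable isomorphism $\Aut(q/\Pc) \cong G \times \Aut(\pi(q)/\Pc)$. As both $q$ and $\pi(q)$ are $\Pc$-internal, Corollary~\ref{ses} yields the short exact sequence in the statement, $1 \to \varprojlim G_{\oa} \to \Aut(q/\Pc) \xrightarrow{\overline{\pi}} \Aut(\pi(q)/\Pc) \to 1$, where $\overline{\pi}$ is the $\emptyset$-definable morphism sending the class of $(\oa,\sigma(\oa))$ to the class of $(\pi(\oa),\pi(\sigma(\oa)))$, and where $\varprojlim G_{\oa}$ is $\emptyset$-definably identified with $\ker(\overline{\pi})$.

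\textbf{Extracting the section.} The key point is that the decomposition $\Aut(q/\Pc) \cong G \times \Aut(\pi(q)/\Pc)$ is compatible with $\overline{\pi}$. I would verify this by unwinding the proof of Theorem~\ref{retgrp}: there the isomorphism is obtained by writing each $\sigma \in \Aut(q/\Pc)$ uniquely as a product of an element of $N_1 := \ker(\overline{\rho})$ and an element of $N_2 := \ker(\overline{\pi})$ (uniqueness from $N_1 \cap N_2 = \id$, existence from the product decomposition proved there), with $N_2$ identified with $\Aut(\rho(q)/\Pc) \cong G$ via $\overline{\rho}$ and Proposition~\ref{retiso}, and $N_1$ identified with $\Aut(\pi(q)/\Pc)$ via $\overline{\pi}\vert_{N_1}$. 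Since $\overline{\pi}$ kills $N_2$, under this identification $\overline{\pi}$ becomes the projection onto the $\Aut(\pi(q)/\Pc)$ factor, and $\overline{\pi}\vert_{N_1} : N_1 \to \Aut(\pi(q)/\Pc)$ is an $\emptyset$-definable group isomorphism. Setting $s := (\overline{\pi}\vert_{N_1})^{-1}$ gives a group-theoretic section of $\overline{\pi}$ whose graph is $\{(g,\sigma) : \overline{\rho}(\sigma) = \id \wedge \overline{\pi}(\sigma) = g\}$, hence $\emptyset$-definable because $\overline{\rho}$ and $\overline{\pi}$ are; equivalently, $s$ is just the inclusion $\Aut(\pi(q)/\Pc) \cong \ker(\overline{\rho}) \hookrightarrow \Aut(q/\Pc)$. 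Composing with the identification $\ker(\overline{\pi}) \cong \varprojlim G_{\oa}$ from Corollary~\ref{ses}, this exhibits the sequence in the statement as definably split, completing the forward direction.

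\textbf{Main obstacle.} There is no deep difficulty; the real work is bookkeeping. One must check carefully that the morphism $\overline{\pi}$ arising in Theorem~\ref{retgrp}, the one in Corollary~\ref{ses}, and the one in the displayed sequence are literally the same $\emptyset$-definable map, and that the complement $\ker(\overline{\rho})$ produced in the proof of Theorem~\ref{retgrp} is exactly the image of the section. Once that identification is made, the section is visibly a definable homomorphism and the argument closes.
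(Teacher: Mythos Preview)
Your proposal is correct and follows essentially the same approach as the paper: the paper states this theorem without an explicit proof, presenting it as the combination of Proposition~\ref{splitprod} (split $\Rightarrow$ retractable) and Theorem~\ref{retgrp} (retractable $\Rightarrow$ product decomposition, hence split). You have simply filled in the bookkeeping the paper leaves implicit, namely that the complement $\ker(\overline{\rho})$ produced in the proof of Theorem~\ref{retgrp} furnishes the definable section of $\overline{\pi}$.
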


We have seen that internality of $q$ can be read from the collapse of the Delta groupoid. It is also linked to the following notion, first introduced in \cite{moosa2010model}: 

\begin{definition}
Suppose $q \in S(d)$ is a stationary type, and $\Pc$ is a family of partial types, over the empty set. We say that $q(x)$ preserves internality to $\Pc$ if whenever $a \models q$ and $c$ are such that $\tp(d/c)$ is almost $\Pc$-internal, then $\tp(a/c)$ is also almost $\Pc$-internal.
\end{definition}

We want to obtain a sufficient condition for $\tp(a/d)$ to preserve internality. Note that by setting $c=d$, we get that preserving internality implies almost internality. 

Remark that if $\tp(a/d) $ is $\Pc$-internal and stationary, then we can consider the type $p = \tp(ad/\emptyset)$, and the projection $\pi$ on the $d$-coordinate. This is a projection with $\Pc$-internal fibers, so yields an $\emptyset$-type-definable Delta groupoid $\G_{p}$.

Intuitively, collapse of the groupoid associated to $\pi$ and $q$ means that the only thing missing for $q$ to be $\Pc$-internal is for $\pi(q)$ to be $\Pc$-internal. Therefore, the following result appears quite natural: 

\begin{prop}\label{collimppres}
Suppose $q = \tp(a/d)$ is $\Pc$-internal and stationary. Let $p = \tp(ad/\emptyset)$. If the Delta groupoid $\G$ associated to $p$ and the projection $\pi$ on the  $d$-coordinate almost collapses, then $\tp(a/d)$ preserves internality to $\Pc$.
\end{prop}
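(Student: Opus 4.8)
The plan is to unwind the definition of preserving internality directly, feeding it the characterisation of almost collapse supplied by Lemma~\ref{colleqind}. So fix $c$ with $\tp(d/c)$ almost $\Pc$-internal and fix any $a \models q = \tp(a/d)$; we must show $\tp(a/c)$ is almost $\Pc$-internal. First I would massage the hypothesis on $\tp(d/c)$: unwinding almost-internality and applying an automorphism fixing $c$ pointwise (such an automorphism fixes $\Pc$ setwise, since $\Pc$ is over $\emptyset$), we may assume there is a parameter set $B \supseteq c$ with $d \ind_c B$ and $d \in \acl(B,\Pc)$, with $d$ itself rather than a conjugate. Then, replacing $B$ by a realization of a nonforking extension of $\tp(B/cd)$ to $cda$ — and repeating the automorphism trick to retain the two previous properties — we may further assume $B \ind_{cd} a$; combined with $B \ind_c d$ this gives $a \ind_c B$.

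Next I bring in the collapse hypothesis. Since the Delta groupoid $\G$ of $p = \tp(ad/\emptyset)$ and $\pi$ almost collapses, Lemma~\ref{colleqind} provides a tuple $\overline\beta$ of independent realizations of the type $r$ of a fundamental system of solutions for the $\pi$-fibres of $p$, such that every $b \models p$ with $b \ind \overline\beta$ satisfies $b \in \acl(\overline\beta, \pi(b), \Pc)$. This property of $\overline\beta$ is $\Aut(\MM)$-invariant, so I may replace $\overline\beta$ by a conjugate independent over $\emptyset$ from $\{a,d,c,B\}$. Applying the property to $b = (a,d)$ — legitimate since $(a,d) \ind_\emptyset \overline\beta$ — gives $a \in \acl(\overline\beta, d, \Pc)$, and substituting $d \in \acl(B,\Pc)$ yields $a \in \acl(\overline\beta, B, \Pc)$.

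It then remains to verify the independence clause for the witness $\tilde B := B \cup \overline\beta$, which contains $c$. From $\overline\beta \ind_\emptyset \{a,c,B\}$ I get $a \ind_{cB} \overline\beta$, and together with $a \ind_c B$ transitivity gives $a \ind_c \tilde B$. Since also $a \in \acl(\tilde B,\Pc)$, i.e. $a \in \acl(\overline m, \tilde B)$ for some finite tuple $\overline m$ of realizations of $\Pc$, this exhibits $\tp(a/c)$ as almost $\Pc$-internal, as required.

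The only real difficulty is the bookkeeping: one must take the witness set $B$ for $\tp(d/c)$ generic over $a$ (not merely over $d$), and take $\overline\beta$ generic over everything already chosen, so that the two independence facts $a \ind_c B$ and $a \ind_{cB} \overline\beta$ hold simultaneously and can be glued. This is exactly where almost collapse is used and cannot be dispensed with: mere $\Pc$-internality of $q$ would only provide parameters witnessing internality of $\tp(a/d)$ that are entangled with $d$, and these cannot be combined with the almost-internality of $\tp(d/c)$ without circularity; almost collapse says precisely that the internality of $\tp(a/d)$ is witnessed by parameters ($\overline\beta$, a tuple of fundamental systems of solutions) that can be pulled free of $d$.
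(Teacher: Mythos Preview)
Your proof is correct and follows essentially the same strategy as the paper's: invoke Lemma~\ref{colleqind} for the almost-collapse witness, combine it with a witness for the almost-internality of $\tp(d/c)$, and arrange the two witnesses to be mutually independent over $c$ so that their union witnesses almost-internality of $\tp(a/c)$. The only cosmetic differences are that the paper chooses the collapse witness first and the internality witness second (you do the reverse), and the paper takes its internality witness to be a tuple of realizations of $\tp(d/c)$ rather than an arbitrary parameter set $B$.
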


\begin{proof}

Recall that we assume $\emptyset = \acl(\emptyset)$, hence $p$ is stationary. Lemma \ref{colleqind} implies the existence of a tuple $\oee$ of realizations of $p$, independent from $ad$ over $\emptyset$, such that $ad \in \acl(\oee, d, \Pc)$. Taking a realization of $\tp(\oee /ad)$ independent from $c$ over $ad$, we can assume that $\oee$ is independent from $adc$ over $\emptyset$.

Now, the type $\tp(d/c)$ is almost $\Pc$-internal, hence there is a tuple $\od$ of realizations of $\tp(d/c)$, independent from $d$ over $c$, such that $d \in \acl(\od , c , \Pc)$. We can assume, without loss of generality, that $\od$ is independent from $ad \oee$ over $c$. Forking calculus yields that $\od e$ is independent from $ad$ over $c$. But $ad \in \acl(\oee ,d , \Pc)$ and $d \in \acl(\od , c ,\Pc)$, so $ad \in \acl (\oee, \od, c , \Pc)$. Hence $\tp(ad/c)$ is almost $\Pc$-internal.

\end{proof}

The converse to this proposition is likely to be false. Indeed, suppose $\tp(a/d)$ is $\Pc$-internal and stationary, but for any tuple $c$, the type $\tp(d/c)$ is almost $\Pc$-internal if and only if it is algebraic. This implies that $\tp(a/d)$ preserves internality to $\Pc$, but should not imply that the groupoid associated to $\tp(ad/\emptyset)$ collapses. The construction given on top of page 4 of \cite{moosa2010model} is a good candidate for a counterexample. It would be interesting to find a necessary and sufficient condition, in terms of Delta groupoids, for a type to preserve internality.

In the literature, examples of types preserving internality appear in \cite{chatzidakis2012note}, \cite{chatzidakis2015differential} and \cite{moosa2010model}. A potential direction for future work is to examine, for each of these examples, if the collapse of a Delta groupoid is involved or not.

\bigskip

\end{document}